\newtheorem*{theoA}{Theorem A}
\newtheorem*{theoB}{Theorem B}
\newtheorem*{theoC}{Theorem C}
\newtheorem*{theoD}{Theorem D}
\newtheorem*{theoE}{Theorem E}
\newtheorem*{theoF}{Theorem F}
\newtheorem*{theoG}{Theorem G}
\newtheorem{theo}{Theorem}[section]
\newtheorem{lem}{Lemma}[section]
\newtheorem{cor}{Corollary}[section]
\newtheorem{que}{Question}[section]
\newtheorem{open problem}{Open problem}[section]
\newcommand{\pa}{\partial}
\newcommand{\ol}{\overline}
\newcommand{\D}{\mathbb{D}}
\newcommand{\C}{\mathbb{C}}
\newcommand{\N}{\mathbb{N}}
\newcommand{\be}{\begin{equation}}
\newcommand{\ee}{\end{equation}}
\newcommand{\bs}{\begin{small}}
\newcommand{\es}{\end{small}}
\newcommand{\beas}{\begin{eqnarray*}}
\newcommand{\eeas}{\end{eqnarray*}}
\newcommand{\bea}{\begin{eqnarray}}
\newcommand{\eea}{\end{eqnarray}}
\renewcommand{\epsilon}{\varepsilon}
\numberwithin{equation}{section}
\begin{document}
\title[The Bohr's Phenomenon]{The Bohr's Phenomenon for the class of K-quasiconformal harmonic mappings}
\author[R. Biswas and R. Mandal]{Raju Biswas and Rajib Mandal}
\date{}
\address{Department of Mathematics, Raiganj University, Raiganj, West Bengal-733134, India.}
\email{rajubiswasjanu02@gmail.com}
\address{Department of Mathematics, Raiganj University, Raiganj, West Bengal-733134, India.}
\email{rajibmathresearch@gmail.com}
\maketitle
\let\thefootnote\relax
\footnotetext{2020 Mathematics Subject Classification: 30A10, 30B10, 30C62, 30C75.}
\footnotetext{Key words and phrases: Harmonic mappings, locally univalent functions,  Bohr radius, Bohr-Rogosinski radius, improved Bohr radius, refined Bohr radius, $K$-quasiconformal mappings.}
\footnotetext{Type set by \AmS -\LaTeX}
\begin{abstract} 
The primary objective of this paper is to establish several sharp versions of improved Bohr inequality, refined Bohr-type inequality, and refined Bohr-Rogosinski inequality for the class 
of $K$-quasiconformal sense-preserving harmonic mappings $f=h+\ol{g}$ in the unit disk $\D := \{z\in\C : |z| < 1\}$. In order to achieve these objectives, we employ the non-
negative quantity $S_\rho(h)$ and the concept of replacing the initial coefficients of the majorant series by the absolute values of the analytic function and its derivative, as well as other various settings.
Moreover, we obtain the sharp Bohr-Rogosinski radius for harmonic mappings in the unit disk by replacing the bounding condition on the analytic function $h$ with the half-plane condition.
\end{abstract}
\section{Introduction and Preliminaries}
Let $f$ be a bounded analytic function on the open unit disk $\mathbb{D}$ with the Taylor series expansion  
\bea\label{e1}f(z)=\sum_{n=0}^{\infty} a_nz^n.\eea 
Then, 
\bea\label{e2} \sum_{n=0}^{\infty}|a_n|r^n\leq \Vert f\Vert_\infty\quad\text{for}\quad|z|=\rho\leq\frac{1}{3},\eea
where $\Vert f\Vert_\infty:=\sup_{z\in\mathbb{D}} |f(z)|$. It is observed that, if $|f(z)|\leq 1$ in $\mathbb{D}$ and $|f(z_0)|=1$ for some point $z_0\in\mathbb{D}$, then $f(z)$ reduces to a unimodular constant function (see \cite[Strict Maximum Principle (Complex Version), P. 88]{N1}).
In this context, the quantity $1/3$ is known as Bohr radius and it can't be improved. The inequality (\ref{e2}) is known as the Bohr inequality. 
In fact, H. Bohr \cite{5} derived the inequality (\ref{e2}) for values of $r\leq 1/6$. However, subsequently Weiner, Riesz, and Schur \cite{200} independently improved it to $1/3$.\\[2mm]
\indent  Analytic functions of the form (\ref{e1}) with modulus less than 1 satisfying (\ref{e2}) for $|z|\leq 1/3$, are said to satisfy the classical Bohr phenomenon. The concept of Bohr phenomenon can be generalized to the class $\mathcal{F}$ 
consisting of analytic functions of the form (\ref{e1}) which map from $\mathbb{D}$ into a given domain $G\subseteq\mathbb{C}$ such that $f(\mathbb{D})\subseteq G$.
The class $\mathcal{F}$ is said to satisfy the Bohr phenomenon if there exists largest radius $\rho_{\Theta}\in(0, 1)$ such that (\ref{e2}) holds for $|z|=\rho \leq\rho_{\Theta}$.
Here $\rho_{\Theta}$ is known as Bohr radius for the class $\mathcal{F}$.
We refer to \cite{1,2,303,301,210,211,212,304,3,6,7,8,9,9a,10,11,306,307,300,13,14,15,309,20,21,R1,24} and the references listed therein for an in-depth investigation on several 
other aspects of Bohr's inequality. 
In their study, Boas and Khavinson \cite{4} have extended the notion of the Bohr radius to encompass the case of several complex variables. They have furthermore identified the 
multidimensional Bohr radius as a significant contribution to this field of research. 
A considerable number of researchers have built upon this foundation, extending and generalizing the phenomenon across diverse contexts (see \cite{213,214,215}).
Another concept that has been widely discussed is the Hankel determinant of the logarithmic coefficients of univalent functions. See \cite{V1,103} and the references therein for 
some recent results on this topic.\\[2mm]
\indent 
In addition to the notion of the Bohr radius, there is another concept known as the Rogosinski radius \cite{24a}, which is defined as follows:
Let $f(z)=\sum_{n=0}^{\infty}a_nz^n$ be analytic in $\mathbb{D}$ such that $|f(z)|<1$ in $\Bbb{D}$. Then, for every $N\geq 1$, we have 
$\left|S_N(z)\right|=\left|\sum_{n=0}^{N-1}a_nz^n\right|<1$ in the disk $|z|<1/2$, where $S_N(z)$ denotes partial sum of $f$. The number $1/2$ is the best possible. Motivated 
by the Rogosinski radius, Kayumov and Ponnusamy\cite{300} have considered the Bohr-Rogosinski sum $R_N^f(z)$ which is defined as
\beas R_N^f(z):=|f(z)|+\sum_{n=N}^{\infty}|a_n||z|^n, \quad\text{where}\quad N\in\N.\eeas
It is evident that $|S_N(z)|=\left|f(z)-\sum_{n=N}^{\infty}a_nz^n\right|\leq R_N^f(z)$. Moreover, the Bohr-Rogosinski sum
$R_N^f(z)$ is related to the classical Bohr sum (Majorant series) in which $N=1$ and $f(z)$ is replaced by $f(0)$. Let $f$ be an analytic function in $\D$ with $|f(z)|<1$
in $\D$. Kayumov and Ponnusamy \cite{300} defined the Bohr-Rogosinski radius as the largest number $\rho_0\in(0, 1)$ such that the inequality $R_N^f(z)\leq 1$ holds for $|z|<\rho_0$.\\[2mm] 
\indent Kayumov and Ponnusamy \cite{300} obtained the following results pertaining to the Bohr-Rogosinski radius.
\begin{theoA}\cite{300}
Let $f(z)=\sum_{n=0}^{\infty} a_nz^n$ be analytic in $\mathbb{D}$ and $|f(z)|\leq 1$. Then
\beas |f(z)|+\sum_{n=N}^{\infty}|a_n||z|^n\leq 1\eeas
for $|z|=\rho\leq R_N$, where $R_N$ is the positive root of the equation $\psi_N(\rho)=0$, $\psi_N(\rho)=2(1+\rho)\rho^N-(1-\rho)^2$. The radius $R_N$ is the best possible. Moreover, 
\beas|f(z)|^2+\sum_{n=N}^{\infty}|a_n||z|^n\leq 1\eeas
for $|z|=\rho\leq R_N'$, where $R_N'$ is the positive root of the equation $(1+\rho)\rho^N-(1-\rho)^2=0$. The radius $R_N'$ is the best possible.
\end{theoA}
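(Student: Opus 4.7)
The plan is to reduce the problem to a one‐variable inequality in $a:=|a_0|$ by combining two classical facts for bounded analytic functions: the Schwarz–Pick estimate
\[
|f(z)|\leq \frac{a+\rho}{1+a\rho},\qquad \rho=|z|,
\]
and the Schur-type coefficient bound $|a_n|\leq 1-a^2$ for every $n\geq 1$, which follows from applying the Schwarz lemma to the disk automorphism $(f(z)-a_0)/(1-\overline{a_0}f(z))$. With these two inputs, summing the geometric series gives the majorant
\[
|f(z)|+\sum_{n=N}^{\infty}|a_n|\rho^{n}\leq \frac{a+\rho}{1+a\rho}+(1-a^{2})\frac{\rho^{N}}{1-\rho},
\]
and an analogous majorant with $\bigl(\tfrac{a+\rho}{1+a\rho}\bigr)^{2}$ in place of the first term for the second assertion.

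Next I would rearrange the inequality ``majorant $\leq 1$.'' After moving the $|f(z)|$-term to the right and using the identity
\[
1-\frac{a+\rho}{1+a\rho}=\frac{(1-a)(1-\rho)}{1+a\rho},
\]
a factor $1-a$ cancels and the inequality reduces to
\[
(1+a)(1+a\rho)\rho^{N}\leq (1-\rho)^{2}.
\]
Since the left side is increasing in $a\in[0,1]$, the worst case is $a\to 1^{-}$, which yields exactly the defining condition $\psi_N(\rho)=2(1+\rho)\rho^{N}-(1-\rho)^{2}\leq 0$. Because $\psi_N(0)<0$ and $\psi_N(1)>0$, the positive root $R_N$ is unique, and $\rho\leq R_N$ is precisely the range in which the inequality holds. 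For the second statement, the corresponding algebraic simplification uses the factorisation $(1+a\rho)^{2}-(a+\rho)^{2}=(1-a^{2})(1-\rho^{2})$; after cancelling $1-a^{2}$ the inequality becomes $(1+a\rho)^{2}\rho^{N}\leq(1-\rho)^{2}(1+\rho)$, and again letting $a\to 1^{-}$ produces the extremal equation $(1+\rho)\rho^{N}-(1-\rho)^{2}=0$ that defines $R_N'$.

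For sharpness of both radii I would test the family of Möbius automorphisms $\varphi_a(z)=(a-z)/(1-az)$ with $a\to 1^{-}$. A direct expansion shows $|\varphi_a(-\rho)|=(a+\rho)/(1+a\rho)$ and the Taylor coefficients $|b_n|=(1-a^{2})a^{n-1}$ satisfy $|b_n|\to 1-a^{2}$ uniformly for fixed $n$ as $a\to 1$, so evaluating the Bohr–Rogosinski sum at $z=-\rho$ (or $z=\rho$ after an obvious rotation) and passing to the limit $a\to 1^{-}$ recovers equality in the reduced inequality. This shows that for any $\rho>R_N$ (resp.\ $\rho>R_N'$) one can choose $a$ close enough to $1$ so that the sum exceeds $1$, establishing optimality.

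The main obstacle I anticipate is not any single estimate but the monotonicity-in-$a$ step: one needs to justify rigorously that it suffices to take $a\to 1^{-}$ despite the competing factors (the $1-a^{2}$ multiplying the tail favours small $a$, whereas $(a+\rho)/(1+a\rho)$ favours large $a$). The cancellation of a full factor $1-a$ after the algebraic rearrangement is what makes this clean, and keeping track of this cancellation is the only delicate bookkeeping in the argument.
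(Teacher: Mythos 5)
Your proposal is correct and follows essentially the same route as the source the paper cites for Theorem A (and the template the paper itself reuses in Theorems 3.1--3.8): Schwarz--Pick for $|f(z)|$, the bound $|a_n|\leq 1-|a_0|^2$ for the tail, cancellation of the factor $1-a$ (resp.\ $1-a^2$) to reduce to $(1+a)(1+a\rho)\rho^N\leq(1-\rho)^2$ (resp.\ $(1+a\rho)^2\rho^N\leq(1-\rho)^2(1+\rho)$) with the worst case at $a\to1^-$, and the M\"obius family $(a-z)/(1-az)$ evaluated at $z=-\rho$ for sharpness. The monotonicity concern you flag at the end is already fully resolved by your own rearrangement, since the reduced left-hand side is manifestly increasing in $a$.
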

Before proceeding with the discussion, and in order to contextualize the recent results, it is essential to introduce the requisite notations.
\noindent Let $h$ be an analytic function in $\mathbb{D}$ and $\mathbb{D}_\rho:=\{z\in\mathbb{C}: |z|<\rho \}$ for $0<\rho<1$. Let $S_\rho(h)$ denotes the planar integral  
\beas S_\rho(h)=\int_{\mathbb{D}_\rho} |h'(z)|^2 dA(z).\eeas
If $h(z)=\sum_{n=0}^\infty a_n z^n$, then it is well known that $S_\rho(h)/\pi=\sum_{n=1}^\infty n|a_n|^2 \rho^{2n}$ and if $h$ is univalent, then $S_\rho(h)$ is the area of the image $h(\mathbb{D}_\rho)$. \\[1mm]
\indent In addition, if $f(z)$ and $g(z)$ are analytic in $\D$. We say that $f$ is quasi-subordinate to $g$ relative to $\phi$, denoted by $f(z)\prec_qg(z)$ in $\D$ relative to $\phi(z)$, if there exist two functions $\phi$ and $\omega$, analytic in $\D$, satisfying $\omega(0)=0$, $|\omega(z)|\leq 1$, and $|\phi(z)|\leq 1$ for $|z|<1$ such that $f(z)=\phi(z)g(\omega(z))$.\\[2mm]
In 2018, Kayumov and Ponnusamy \cite{14} obtained the following improved versions of Bohr's inequality for the bounded analytic functions in $\mathbb{D}$.
\begin{theoB}\cite{14}
Let $f(z)=\sum_{n=0}^{\infty} a_nz^n$ be analytic in $\mathbb{D}$, $|f(z)|\leq 1$ and $S_\rho$ denotes the area of the image of the subdisk $|z|<\rho$ under mapping $f$. Then
\beas \sum_{n=0}^{\infty} |a_n|\rho^n+\frac{16}{9}\left(\frac{S_\rho(f)}{\pi}\right)\leq 1\quad\text{for}\quad \rho\leq\frac{1}{3}.\eeas
The numbers $1/3$, $16/9$ cannot be improved. Moreover, 
\beas |a_0|^2+\sum_{n=1}^{\infty} |a_n| \rho^n+\frac{9}{8}\left(\frac{S_\rho(f)}{\pi}\right)\leq 1\quad\text{for}\quad \rho\leq\frac{1}{2}.\eeas
The numbers $1/2$, $9/8$ cannot be improved. \end{theoB}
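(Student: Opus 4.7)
The plan is to reduce both inequalities to one-variable estimates in $a:=|a_0|\in[0,1]$, where we may assume $a_0=a\ge 0$ by replacing $f$ with $e^{-i\arg a_0}f$ if necessary. Three classical estimates for a bounded analytic function $f(z)=\sum_{n=0}^{\infty}a_nz^n$ with $\|f\|_\infty\le 1$ will be in play: (i) Wiener's coefficient bound, $|a_n|\le 1-a^2$ for $n\ge 1$, which gives $\sum_{n=1}^{\infty}|a_n|\rho^n\le (1-a^2)\rho/(1-\rho)$; (ii) Parseval's identity applied to $f(\rho z)$, which gives $\sum_{n=1}^{\infty}|a_n|^2\rho^{2n}\le 1-a^2$; and (iii) a sharp area estimate on $S_\rho(f)/\pi=\sum_{n=1}^{\infty}n|a_n|^2\rho^{2n}$, derivable from the Schwarz--Pick representation $f(z)=(a+z\omega(z))/(1+az\omega(z))$ with $|\omega|\le 1$ in $\mathbb{D}$, and saturated by the M\"obius factor $\phi_a(z)=(a+z)/(1+az)$.

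Given these estimates, the first inequality is established by substituting (i) (or its sharper form $\sum|a_n|\rho^n\le(1-a^2)\rho/(1-a\rho)$) together with (iii) into the left-hand side and analyzing the resulting expression at $\rho=1/3$ as a function of $a\in[0,1]$. The Bohr-type portion contributes $a+(1-a^2)\rho/(1-\rho)$, whose defect from $1$ at $\rho=1/3$ equals $(1-a)^2/2$; the weight $16/9$ on the area term is calibrated so that $(16/9)\sum n|a_n|^2\rho^{2n}$ fits exactly into this slack under the sharp bound (iii). The second inequality is handled analogously at $\rho=1/2$, with the quadratic term $|a_0|^2$ (in place of $|a_0|$) producing the extra $(1+a)$-factor of slack needed to absorb the larger weight $9/8$. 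Sharpness of the radii and constants follows by testing the M\"obius family $\phi_a$ in the limit $a\to 1^-$, for which (i) and (iii) become asymptotic equalities at the critical $\rho$.

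The main obstacle will be the derivation and application of the sharp area estimate in (iii). The naive combination of (i) with itself yields only $\sum n|a_n|^2\rho^{2n}\le(1-a^2)^2\rho^2/(1-\rho^2)^2$, which is too coarse in an intermediate range of $a$---one checks that already at $a=1/2$, $\rho=1/3$ the resulting total overshoots $1$. To sharpen, I would exploit the identity $f'(z)=(1-a^2)g'(z)/(1+\overline a g(z))^2$, where $g=(f-a)/(1-\overline a f)$ is a Schwarz function, and pass to the change of variables $w=g(z)$ in the area integral $\int_{|z|<\rho}|f'|^2\,dA$. A careful polynomial computation in $a$ at the critical radius then closes the argument.
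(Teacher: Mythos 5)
First, a framing remark: the paper does not prove this statement at all --- Theorem~B is quoted verbatim from Kayumov--Ponnusamy \cite{14} as background, so there is no in-paper proof to compare against. Judging your outline on its own merits, there is a genuine gap, and it sits exactly where you located the ``main obstacle,'' but your proposed resolution does not remove it. The sharp area estimate (iii), $S_\rho(f)/\pi\le(1-a^2)^2\rho^2/(1-a^2\rho^2)^2$, degenerates to the naive bound $(1-a^2)^2\rho^2/(1-\rho^2)^2$ precisely in the critical regime $a\to1^-$ (since $1-a^2\rho^2\to1-\rho^2$), so it cannot raise the admissible weight beyond the $8/9$ that the naive computation already yields. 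Concretely, at $\rho=1/3$ the Wiener-based slack in the Bohr part is $(1-a)^2/2$, whereas $\tfrac{16}{9}\,S_\rho/\pi$ under (iii) equals $16(1-a^2)^2/(9-a^2)^2\sim(1-a)^2$ as $a\to1^-$; the required inequality $32(1+a)^2\le(9-a^2)^2$ fails near $a=1$ ($128>64$). The M\"obius extremal shows what is actually needed: for $\phi_a$ the Bohr-part slack is $2(1-a)^2/(3-a)\sim(1-a)^2$, i.e.\ \emph{twice} the Wiener slack, and it is this factor of $2$ that turns $8/9$ into $16/9$. So the term that must be sharpened is the linear sum $\sum_{n\ge1}|a_n|\rho^n$, not the area term --- for instance via Cauchy--Schwarz against the sharp quadratic bound $\sum_{n\ge1}|a_n|^2\rho^n\le(1-a^2)^2\rho/(1-a^2\rho)$, giving $\sum_{n\ge1}|a_n|\rho^n\le(1-a^2)\rho/\sqrt{(1-\rho)(1-a^2\rho)}$, or via a refinement of the type recorded in Lemma~2.5 of this paper, which subtracts a quadratic correction from the Wiener bound.

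The situation is worse for the second inequality. At $\rho=1/2$ Wiener gives $a^2+\sum_{n\ge1}|a_n|\rho^n\le a^2+(1-a^2)\cdot 1=1$, i.e.\ zero slack identically in $a$, so there is nothing left to absorb $\tfrac98\,S_\rho/\pi$; the ``extra $(1+a)$-factor of slack'' you invoke is not produced by these estimates. The parenthetical sharper form $\sum_{n\ge1}|a_n|\rho^n\le(1-a^2)\rho/(1-a\rho)$, which would indeed calibrate both constants correctly, is (a) asserted without proof and is not actually what your calibration paragraph uses, and (b) false at $\rho=1/2$: at $a=0$ it reduces to Bohr's inequality at radius $1/2$, which fails, e.g., for $f(z)=z(c-z)/(1-cz)$ with $c=3/5$, where $\sum_{n\ge1}|a_n|(1/2)^n=3/10+8/35>1/2$. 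As written, the outline therefore proves the first inequality only with $8/9$ in place of $16/9$ and does not prove the second at all; the missing idea is a correct, $a$-dependent refinement of the linear majorant sum, coupled to the quadratic data.
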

\noindent Based on the initiation of Kayumov and Ponnusamy \cite{300,14}, Liu {\it et al.} \cite{215a} explored several forms of Bohr-type inequalities and we recall one of them here.
\begin{theoC}\cite{215a}
Let $f(z)=\sum_{n=0}^\infty a_nz^n$ be analytic in $\mathbb{D}$ and $|f(z)|<1$ in $\mathbb{D}$. Then,
\beas |f(z)|+|f'(z)|\rho+\sum_{n=2}^\infty |a_n|\rho^n\leq 1\quad\text{for}\quad \rho\leq\frac{\sqrt{17}-3}{4}.\eeas
The number $(\sqrt{17}-3)/4$ is the best possible. \end{theoC}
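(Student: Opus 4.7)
The plan is to combine the Schwarz--Pick estimates for $|f(z)|$ and $|f'(z)|$ with the standard coefficient bound $|a_n|\le 1-|a_0|^2$, and then to reduce the whole problem to a one-variable polynomial inequality whose critical point turns out to be precisely $(\sqrt{17}-3)/4$. Set $a:=|a_0|$ and $\rho:=|z|$. Schwarz--Pick gives $|f(z)|\le (a+\rho)/(1+a\rho)$; its infinitesimal form yields $|f'(z)|\le (1-|f(z)|^2)/(1-\rho^2)$; and the standard Schur-type bound gives the tail estimate $\sum_{n=2}^{\infty}|a_n|\rho^n\le (1-a^2)\rho^2/(1-\rho)$.

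The crucial step is to couple the first two bounds rather than treat them independently. The auxiliary map $\psi(w):=w+(1-w^2)\rho/(1-\rho^2)$ is increasing on $[0,(1-\rho^2)/(2\rho)]$; for $\rho\le(\sqrt{17}-3)/4\approx 0.281$ one easily checks $(a+\rho)/(1+a\rho)\le 1\le (1-\rho^2)/(2\rho)$, so one may substitute $w=(a+\rho)/(1+a\rho)$ into $\psi$. Using the identity $1-w^2=(1-a^2)(1-\rho^2)/(1+a\rho)^2$, the two Schwarz--Pick bounds collapse cleanly into
\[
|f(z)|+|f'(z)|\rho\;\le\; \frac{a+\rho}{1+a\rho}+\frac{(1-a^2)\rho}{(1+a\rho)^2}.
\]

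Adding the tail bound and factoring $(1-a)$ out of the difference between the resulting upper bound and $1$, the theorem reduces to verifying
\[
H(a,\rho):=\frac{(1+a)\rho}{(1+a\rho)^2}+\frac{(1+a)\rho^2}{1-\rho}-\frac{1-\rho}{1+a\rho}\;\le\; 0
\]
for all $a\in[0,1]$ and $\rho\le(\sqrt{17}-3)/4$. A direct partial-derivative computation shows $\partial H/\partial a>0$ as long as $\rho<1/3$, so it suffices to check $H(1,\rho)\le 0$. Clearing denominators reduces this to $2\rho^4+3\rho^3+\rho^2+3\rho-1\le 0$, and the fortunate factorization $2\rho^4+3\rho^3+\rho^2+3\rho-1=(2\rho^2+3\rho-1)(\rho^2+1)$ (verified by expansion) then collapses everything to the quadratic condition $2\rho^2+3\rho-1\le 0$, i.e., $\rho\le(\sqrt{17}-3)/4$.

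For sharpness, I would evaluate the left-hand side at $f(z)=\phi_a(z):=(a-z)/(1-az)$ and $z=-\rho$; a direct computation yields $1+(1-a)\,\Omega(a,\rho)$ with $\Omega(1,\rho)=(2\rho^2+3\rho-1)(\rho^2+1)/[(1+\rho)^2(1-\rho)]$, which is strictly positive once $\rho>(\sqrt{17}-3)/4$, and continuity then forces a violation of the inequality for $a$ close to $1$. The main obstacle is the coupling in the second step: the naive estimate $|f'(z)|\le (1-a^2)/(1-a\rho)^2$ (from the derivative of Schwarz--Pick with a lower bound on $|f|$), or the cruder $(1-a^2)/(1-\rho)^2$ from the Schur bound, produces a strictly smaller radius, because $|f|$ and $|f'|$ cannot simultaneously attain their Schwarz--Pick maxima at the same point $z$; exploiting this simultaneity constraint through the monotonicity of $\psi$ is what makes the argument sharp.
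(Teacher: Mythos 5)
Your proof is correct and follows essentially the same route the paper uses for its harmonic generalization, Theorem~\ref{TT1}: the paper quotes Theorem C from Liu--Shang--Xu without proof, but its proof of Theorem~\ref{TT1} specializes at $k=0$ to exactly your argument --- the same coupling of the two Schwarz--Pick bounds through the increasing auxiliary function $x\mapsto x+\alpha(1-x^2)$ (valid because $\rho/(1-\rho^2)\le 1/2$ for $\rho\le\sqrt2-1$), the same factoring out of $(1-a)$, the same monotonicity in $a$, and the same extremal family $(a-z)/(1-az)$ evaluated at $z=-\rho$. Your added observation that $2\rho^4+3\rho^3+\rho^2+3\rho-1=(2\rho^2+3\rho-1)(\rho^2+1)$, which yields the closed-form radius $(\sqrt{17}-3)/4$, checks out.
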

Moreover, a number of authors have investigated additional extensions of this kind (see \cite{2a,300a,18a}). In 2020, Ponnusamy {\it et al.} \cite{23a} obtained the following refined Bohr inequality by applying a refined version of the coefficient inequalities.
\begin{theoD}\cite{23a}  Let $f(z)=\sum_{n=0}^{\infty} a_nz^n$ be analytic in $\mathbb{D}$ and $|f(z)|\leq 1$. Then,
\beas \sum_{n=0}^\infty |a_n|\rho^n+\left(\frac{1}{1+|a_0|}+\frac{\rho}{1-\rho}\right)\sum_{n=1}^\infty |a_n|^2 \rho^{2n}\leq 1\eeas
for $\rho\leq 1/(2+|a_0|)$, and the numbers $1/(1+|a_0|)$ and $1/(2+|a_0|)$ cannot be improved. Moreover,
\beas |a_0|^2+\sum_{n=1}^\infty |a_n|\rho^n+\left(\frac{1}{1+|a_0|}+\frac{\rho}{1-\rho}\right)\sum_{n=1}^\infty |a_n|^2 \rho^{2n}\leq 1\eeas
for $\rho\leq 1/2$. The numbers $1/(1+|a_0|)$ and $1/2$ cannot be improved.\end{theoD}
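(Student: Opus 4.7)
The plan is to derive both refined inequalities as corollaries of a single sharpened Schwarz--Pick type coefficient estimate
\[
\sum_{n=1}^\infty |a_n|\rho^n + \left(\frac{1}{1+a} + \frac{\rho}{1-\rho}\right)\sum_{n=1}^\infty |a_n|^2\rho^{2n} \;\leq\; \frac{(1-a^2)\,\rho}{1-\rho}, \qquad (\star)
\]
where $a := |a_0|$. First I would establish $(\star)$, which is the technical heart of the argument; it strengthens the classical Schwarz--Pick bound $\sum_{n\ge1}|a_n|\rho^n\le(1-a^2)\rho/(1-\rho)$ by absorbing a precise Parseval-type correction. The useful algebraic identity
\[
\frac{1}{1+a} + \frac{\rho}{1-\rho} \;=\; \frac{1+a\rho}{(1+a)(1-\rho)}
\]
reveals that the extremiser of $(\star)$ is the disk automorphism $\phi_a(z) = (a-z)/(1-az)$. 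The inequality itself can be proved by invoking Schur's bound $|a_n|\le 1-a^2$ together with Parseval, or, more cleanly, by running one step of the Schur algorithm after reducing to the case $a=0$ via a M\"obius pre-composition.

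Once $(\star)$ is available the rest is essentially bookkeeping. For the first assertion, I would add $a$ to both sides of $(\star)$ to get
\[
\sum_{n=0}^\infty |a_n|\rho^n + \left(\frac{1}{1+a} + \frac{\rho}{1-\rho}\right)\sum_{n=1}^\infty |a_n|^2\rho^{2n} \;\leq\; a + \frac{(1-a^2)\rho}{1-\rho},
\]
and observe that the right-hand side is at most $1$ exactly when $(1+a)\rho \le 1-\rho$, i.e.\ $\rho\le 1/(2+a)$. For the second assertion, I would add $a^2$ rather than $a$; the reduced inequality $a^2 + (1-a^2)\rho/(1-\rho)\le 1$ is equivalent to $\rho/(1-\rho)\le 1$, hence to $\rho\le 1/2$.

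Sharpness I would check directly on $\phi_a$, whose coefficients satisfy $|a_n| = (1-a^2)a^{n-1}$ for $n\ge 1$. A short computation gives
\[
\sum_{n\ge 1}|a_n|\rho^n \;=\; \frac{(1-a^2)\rho}{1-a\rho}, \qquad \sum_{n\ge 1}|a_n|^2\rho^{2n} \;=\; \frac{(1-a^2)^2\rho^2}{1-a^2\rho^2};
\]
substituting $\rho = 1/(2+a)$ into the first inequality (respectively $\rho = 1/2$ into the second) and simplifying with the identity above produces equality, showing simultaneously that the radii $1/(2+|a_0|)$ and $1/2$ are best possible and that the constant $1/(1+|a_0|)$ in front of $\sum|a_n|^2\rho^{2n}$ cannot be enlarged. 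The main obstacle will be the proof of $(\star)$: isolating the precise coefficient $1/(1+a)$ is delicate, because any larger value immediately violates equality on $\phi_a$, while any smaller value would yield only a weaker theorem.
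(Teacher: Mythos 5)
The statement you are proving is quoted in the paper from Ponnusamy--Vijayakumar--Wirths \cite{23a} without proof, so there is no in-paper argument to compare against; the closest object in the paper is Lemma \ref{lem5}, whose $N=1$ case (where $t=0$ and the middle term drops out) is exactly your inequality $(\star)$. Granting $(\star)$, everything you do afterwards is correct: adding $a$ reduces the first claim to $a+(1-a^2)\rho/(1-\rho)\le 1$, i.e.\ $\rho\le 1/(2+a)$; adding $a^2$ reduces the second to $\rho\le 1/2$; and your sharpness computation on $\phi_a(z)=(a-z)/(1-az)$ does close up --- at $\rho=1/(2+a)$ one gets $a+\tfrac{1-a^2}{2}+\tfrac{(1-a)^2}{2}=1$, and at $\rho=1/2$ one gets $a^2+(1-a^2)=1$ --- which simultaneously pins down the radius and the constant $1/(1+a)$.

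The genuine gap is that $(\star)$ is the entire analytic content of the theorem, and you have not proved it. Of the two routes you sketch, the first one cannot work: the termwise Wiener/Schur bound $|a_n|\le 1-a^2$ already saturates the right-hand side, since it gives $\sum_{n\ge1}|a_n|\rho^n\le(1-a^2)\rho/(1-\rho)$ with no slack left over, so there is nothing with which to absorb the nonnegative correction term $\bigl(\tfrac{1}{1+a}+\tfrac{\rho}{1-\rho}\bigr)\sum_{n\ge1}|a_n|^2\rho^{2n}$; combining it naively with Parseval only moves that term to the wrong side of the inequality. The second route (one step of the Schur algorithm) is the right idea but is where all the work lives: one writes $f=(a_0+\varphi)/(1+\overline{a_0}\varphi)$ with $\varphi(0)=0$, $|\varphi|\le1$, expands $f-a_0=(1-|a_0|^2)\sum_{k\ge1}(-\overline{a_0})^{k-1}\varphi^k$, and must then track how the $\ell^2$ information $\sum|c_n|^2\rho^{2n}\le\rho^2$ for the coefficients of $\varphi$ propagates through the powers $\varphi^k$ to produce precisely the coefficient $1/(1+a)$. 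None of that is carried out, and as you yourself note, the constant is rigid (any larger value fails on $\phi_a$), so a soft argument will not suffice. To make the proof self-contained you must either execute that Schur-algorithm estimate in detail or explicitly invoke Lemma \ref{lem5} (equivalently the lemma of \cite{23a}/\cite{18a}) as a black box, in which case your write-up is a correct derivation of Theorem D from it.
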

Let $f = u + iv$ be a complex-valued function in a simply connected domain $\Omega$. If $f$ satisfies the Laplace equation $\Delta f =4f_{z\ol z} = 0$, then $f$ is said to be harmonic in $\Omega$. In other words, the functions $u$ and $v$ are real harmonic in $\Omega$. It should be noted that every harmonic mapping $f$ has the canonical representation $f = h + \ol g$, where $h$ and 
$g$ are analytic in $\Omega$, known respectively as the analytic and co-analytic parts of $f$, and $\ol{g(z)}$ denotes the complex conjugate of $g(z)$. This representation is 
unique up to an additive constant (see \cite{20P}). The inverse function theorem and a result of Lewy \cite{18} shows that a harmonic function $f$ is locally univalent in $\Omega$ if, and only if, the Jacobian  of $f$, defined by $J_f(z):=|h'(z)|^2-|g'(z)|^2$ is non-zero in $\Omega$. A harmonic mapping $f$ is locally univalent and sense-preserving
in $\Omega$ if, and only if, $J_f (z) > 0$ in $\Omega$ or equivalently if $h'\not=0$ in $\Omega$
and the dilatation $\omega_f:= \omega=g'/h'$ of $f$ has the property that $|\omega_f| < 1$ in $\Omega$ (see \cite{18}).\\[2mm]
\indent If a locally univalent and sense-preserving harmonic mapping $f = h+\ol{g}$ satisfies the condition $|g'(z)/h'(z)| \leq k < 1$ for $z\in\mathbb{D}$, then $f$ is said to be 
$K$-quasiconformal harmonic mapping on $\mathbb{D}$, where $K = (1+k)/(1-k) \geq 1$ (see \cite{12,22}). Clearly, $k\to 1$ corresponds to the limiting case $K \to\infty$.\\[2mm] 
\indent In 2018, Kayumov {\it et al.} \cite{15a} established the harmonic extension of the classical Bohr theorem and obtained the following results. 
\begin{theoE}\cite{15a} Suppose that $f(z)=h(z)+\ol{g(z)}=\sum_{n=0}^\infty a_n z^n+\ol{\sum_{n=1}^\infty b_n z^n}$ is a sense-preserving $K$-quasiconformal harmonic mapping in $\mathbb{D}$, where $h(z)$ is bounded in $\mathbb{D}$. Then
\beas\sum_{n=0}^\infty |a_n|\rho^n+\sum_{n=1}^\infty |b_n|\rho^n\leq \Vert h(z)\Vert_\infty\quad\text{for}\quad \rho\leq \frac{K+1}{5K+1}.\eeas
The number $(K+1)/(5K+1)$ is sharp. Moreover,
\beas |a_0|^2+\sum_{n=1}^\infty (|a_n|+|b_n|)\rho^n\leq \Vert h(z)\Vert_\infty\quad\text{for}\quad \rho\leq \frac{K+1}{3K+1}.\eeas
The number $(K+1)/(3K+1)$ is sharp.
\end{theoE}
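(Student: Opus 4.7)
The plan is to reduce both inequalities to (i) the classical Schwarz--Pick coefficient bound applied to the analytic part $h$, and (ii) a comparison estimate between the Bohr sums of $g$ and of $h$ that uses the dilatation bound. After normalizing $\Vert h\Vert_\infty = 1$, set $k := (K-1)/(K+1) \in [0,1)$, so that $|\omega_f(z)| = |g'(z)/h'(z)| \leq k$ on $\D$.

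The first ingredient is standard: for $h$ with $\Vert h\Vert_\infty \leq 1$, one has $|a_n| \leq 1-|a_0|^2$ for $n\geq 1$, and hence
\beas \sum_{n=1}^\infty |a_n|\rho^n \leq (1-|a_0|^2)\frac{\rho}{1-\rho}.\eeas
The second, and main, ingredient is the comparison estimate
\beas \sum_{n=1}^\infty |b_n|\rho^n \leq k \sum_{n=1}^\infty |a_n|\rho^n,\eeas
which I expect to be the principal obstacle. It should follow, after writing $\omega_f = k\psi$ with $|\psi|\leq 1$, from the quasi-subordination relation $g'(z) = k\psi(z)h'(z)$ together with a careful coefficient analysis that exploits $\Vert h\Vert_\infty \leq 1$. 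The difficulty here is that a termwise bound $|b_n|\leq k|a_n|$ is in general false, so one needs a genuine summed inequality, which I would try to obtain via a Rogosinski/quasi-subordination argument in tandem with the boundedness of $h$.

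Granting these two estimates, the rest is elementary algebra. Setting $a := |a_0|$, the first target sum is bounded above by
\beas a + (1+k)(1-a^2)\frac{\rho}{1-\rho},\eeas
which does not exceed $1$ precisely when $(1+k)(1+a)\rho \leq 1-\rho$; taking the supremum over $a\in[0,1)$ yields $\rho \leq 1/(3+2k)$, and substituting $k = (K-1)/(K+1)$ simplifies this to the claimed $(K+1)/(5K+1)$. For the second statement the same inputs give
\beas a^2 + (1+k)(1-a^2)\frac{\rho}{1-\rho} \leq 1,\eeas
which is equivalent to $(1+k)\rho \leq 1-\rho$, hence $\rho \leq 1/(2+k) = (K+1)/(3K+1)$.

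To verify sharpness, I would test the family $h_a(z) = (a-z)/(1-az)$ (a unimodular Blaschke factor with $h_a(0) = a$) paired with the co-analytic part $g_a(z) = k(h_a(z)-a)$, corresponding to constant dilatation $\omega_{f_a}\equiv k$, which makes $f_a = h_a + \overline{g_a}$ a $K$-quasiconformal sense-preserving harmonic mapping. Inserting this pair into both inequalities and letting $a \to 1^-$ at the respective critical radii $(K+1)/(5K+1)$ and $(K+1)/(3K+1)$ shows that equality is approached, so neither radius can be enlarged.
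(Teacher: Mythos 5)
Your overall architecture (the Schwarz--Pick bound $|a_n|\le 1-|a_0|^2$, a dilatation-based comparison between the Bohr sums of $g$ and $h$, elementary optimization in $a=|a_0|$, and the test pair $h_a(z)=(a-z)/(1-az)$, $g_a=k(h_a-a)$ with $a\to1^-$) matches the paper's, and your algebra reducing the two radii to $1/(3+2k)$ and $1/(2+k)$ is correct. The genuine gap is exactly the step you flag as the ``principal obstacle,'' the comparison $\sum_{n\ge1}|b_n|\rho^n\le k\sum_{n\ge1}|a_n|\rho^n$: you do not prove it, and it is not the inequality the paper relies on. From $g'=k\psi h'$ with $|\psi|\le1$ this comparison can be obtained only for $\rho\le 1/3$ (it is the Bohr-type inequality for quasi-subordination applied to $g'/k\prec_q h'$; compare Lemma \ref{lem30}, which carries precisely this restriction), and it genuinely fails for larger $\rho$: taking $h(z)=z$ and $\omega(z)=k(c+z)/(1+cz)$ gives $\sum_{n\ge1}|b_n|\rho^n=k\bigl(c\rho+\tfrac{1-c^2}{c^2}(-\log(1-c\rho)-c\rho)\bigr)$, which for $c=0.9$ exceeds $k\rho=k\sum_{n\ge1}|a_n|\rho^n$ as $\rho\to1^-$, even though $\Vert h\Vert_\infty=1$. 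Since $(K+1)/(5K+1)\le 1/3$, your route could in principle be completed for the first inequality once the quasi-subordination lemma is actually supplied; but $(K+1)/(3K+1)$ equals $1/2$ at $K=1$, so your key estimate is unavailable (and unproven) precisely where the second inequality needs it.

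The paper's machinery (following \cite{15a}; see the proof of Theorem \ref{T1}) avoids this entirely: it uses Lemma \ref{lem3}, the $\ell^2$-comparison $\sum_{n\ge1}|b_n|^2\rho^n\le k^2\sum_{n\ge1}|a_n|^2\rho^n$, which holds for all $\rho<1$, and then combines it with $|a_n|\le 1-a^2$ and the Cauchy--Schwarz inequality to get $\sum_{n\ge1}|b_n|\rho^n\le k(1-a^2)\rho/(1-\rho)$ directly. This lands on the same bound $a+(1+k)(1-a^2)\rho/(1-\rho)$ (respectively with $a^2$ in place of $a$) that you derive, but with no restriction to $\rho\le1/3$, which is what makes the radius $(K+1)/(3K+1)$ attainable. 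If you replace your comparison step by this $\ell^2$/Cauchy--Schwarz argument, the rest of your proof, including the sharpness analysis, goes through.
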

\begin{theoF}\cite{15a} Suppose that $f(z)=h(z)+\ol{g(z)}=\sum_{n=0}^\infty a_n z^n+\ol{\sum_{n=2}^\infty b_n z^n}$ is a sense-preserving $K$-quasiconformal harmonic mapping in $\mathbb{D}$, where $h(z)$ is bounded in $\mathbb{D}$. Then
\beas\sum_{n=0}^\infty |a_n|\rho^n+\sum_{n=2}^\infty |b_n|\rho^n\leq \Vert h(z)\Vert_\infty\quad\text{for}\quad \rho\leq \rho_K,\eeas
where $\rho_K$ is the positive root of the equation 
\beas \frac{\rho}{1-\rho}+\left(\frac{K-1}{K+1}\right) \rho^2\sqrt{\frac{1+\rho^2}{(1-\rho^2)^3}}\sqrt{\frac{\pi^2}{6}-1}=\frac{1}{2}.\eeas
The number $\rho_K$ cannot be replaced by a number greater than $R=R(K)$, where $R$ is the positive root of the equation 
\beas \frac{4R}{1-R}\left(\frac{K}{K+1}\right)+2\left(\frac{K-1}{K+1}\right)\log(1-R)=1.\eeas\end{theoF}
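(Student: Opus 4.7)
The plan is to bound the analytic and co-analytic pieces separately, then combine them and take the worst case in $|a_0|$. After normalizing so that $\|h\|_\infty = 1$, the Schwarz--Pick inequality gives $|a_n| \leq 1 - |a_0|^2$ for $n \geq 1$, which immediately produces
\[
\sum_{n=0}^\infty |a_n|\rho^n \leq |a_0| + (1-|a_0|^2)\frac{\rho}{1-\rho}.
\]

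The main new ingredient lies in controlling the co-analytic tail. Because $g(z)=\sum_{n\geq 2}b_n z^n$, the dilatation $\omega_f = g'/h'$ satisfies $\omega_f(0)=0$, so the Schwarz lemma applied to $\omega_f/k$ (with $k=(K-1)/(K+1)$) yields the strengthened pointwise estimate $|g'(z)| \leq k|z||h'(z)|$. Writing $|b_n|\rho^n = (1/n)\cdot(n|b_n|\rho^n)$ and applying Cauchy--Schwarz together with $\sum_{n\geq 2}1/n^2 = \pi^2/6 - 1$ gives
\[
\sum_{n=2}^\infty |b_n|\rho^n \leq \sqrt{\tfrac{\pi^2}{6}-1}\,\left(\sum_{n=2}^\infty n^2|b_n|^2\rho^{2n}\right)^{1/2}.
\]
Parseval's identity identifies the remaining sum as $\rho^2 \cdot (2\pi)^{-1}\int_0^{2\pi}|g'(\rho e^{i\theta})|^2\,d\theta$, which together with the Schwarz bound $|g'(z)|^2 \leq k^2|z|^2|h'(z)|^2$, the coefficient estimate $|a_n|\leq 1-|a_0|^2$, and the identity $\sum_{n\geq 1}n^2\rho^{2n} = \rho^2(1+\rho^2)/(1-\rho^2)^3$, yields
\[
\sum_{n=2}^\infty |b_n|\rho^n \leq k(1-|a_0|^2)\,\rho^2\,\sqrt{\frac{1+\rho^2}{(1-\rho^2)^3}}\sqrt{\frac{\pi^2}{6}-1}.
\]

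Adding the two estimates, the condition $\sum_{n\geq 0}|a_n|\rho^n + \sum_{n\geq 2}|b_n|\rho^n \leq 1$ reduces, after factoring $(1-|a_0|^2)=(1-|a_0|)(1+|a_0|)$, to
\[
(1+|a_0|)\left[\frac{\rho}{1-\rho} + k\rho^2\sqrt{\frac{1+\rho^2}{(1-\rho^2)^3}}\sqrt{\frac{\pi^2}{6}-1}\right] \leq 1.
\]
The supremum over $|a_0|\in[0,1)$ is attained as $|a_0|\to 1^-$, producing exactly the defining equation of $\rho_K$. For the second assertion, I would test the inequality against an extremal family such as $h_a(z)=(a-z)/(1-az)$ with $g_a$ chosen so that $g_a'(z)=k\,z\,h_a'(z)$ up to unimodular factors making every coefficient of $g_a$ nonnegative; letting $a\to 1^-$ and evaluating the resulting series in closed form (in which $\sum_{n\geq 2}\rho^n/n = -\log(1-\rho)-\rho$ generates the logarithmic term) forces the optimal radius to be at most the root $R(K)$. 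The principal obstacle is this sharpness step --- choosing the phases in $g_a'$ correctly and carrying out the limit $a\to 1^-$ to produce the precise transcendental equation for $R(K)$ --- whereas the upper-bound half is fairly routine once the key observation $\omega_f(0)=0$ (hence $|\omega_f(z)|\leq k|z|$) is in place.
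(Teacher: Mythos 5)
Theorem F is quoted from reference \cite{15a} and the paper itself gives no proof of it, so there is nothing in-paper to compare against line by line; judged on its own merits, your argument is correct and is essentially the standard one. The bound for the analytic part via Schwarz--Pick, the observation that $b_1=0$ forces $\omega_f(0)=0$ and hence $|g'(z)|\leq k|z||h'(z)|$, the Cauchy--Schwarz step with $\sum_{n\geq 2}n^{-2}=\pi^2/6-1$, Parseval, and the reduction to $(1+|a_0|)\bigl[\rho/(1-\rho)+k\rho^2\sqrt{(1+\rho^2)/(1-\rho^2)^3}\sqrt{\pi^2/6-1}\bigr]\leq 1$ with the worst case at $|a_0|\to 1^-$ all check out and reproduce the defining equation for $\rho_K$. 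The sharpness step you flag as the ``principal obstacle'' is in fact routine and is carried out explicitly elsewhere in this paper (in the proofs of Theorems 3.7 and 3.8): take $h_a(z)=(a-z)/(1-az)$ and $g_a'(z)=\lambda k\,z\,h_a'(z)$ with $|\lambda|=1$, so that $g_a$ has coefficients $B_n=-k\lambda\frac{n-1}{n}(1-a^2)a^{n-2}$ for $n\geq 2$; summing $\sum_{n\geq 2}\frac{n-1}{n}(a\rho)^n$ produces the term $\rho/(1-\rho)+\log(1-\rho)$ in the limit $a\to 1^-$, and the total reduces to $1+(1-a)\bigl[\tfrac{4K\rho}{(K+1)(1-\rho)}+\tfrac{2(K-1)}{K+1}\log(1-\rho)-1+o(1)\bigr]$, which exceeds $1$ for $\rho>R(K)$ and $a$ near $1$, exactly as required.
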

In 2018, Kayumov {\it et al.} \cite{15a} established the following result by imposing the half-plane condition in place of the bounding condition on $h$.
\begin{theoG}
Suppose that $f(z)=h(z)+\ol{g(z)}=\sum_{n=0}^\infty a_n z^n+\ol{\sum_{n=1}^\infty b_n z^n}$ is a sense-preserving $K$-quasiconformal harmonic mapping in $\mathbb{D}$, where $h(z)$ satisfies the conditions $\text{Re}\;h(z)\leq 1$ in $\D$ and $h(0)=a_0>0$. Then
\beas a_0+\sum_{n=1}^\infty |a_n|\rho^n+\sum_{n=1}^\infty |b_n|\rho^n\leq1\quad\text{for}\quad \rho\leq \frac{K+1}{5K+1}.\eeas
The number $(K+1)/(5K+1)$ is sharp.
\end{theoG}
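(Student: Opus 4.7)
The plan is to reduce Theorem G to two independent estimates and combine them: a Carath\'eodory-type coefficient bound for the analytic part $h$ coming from the half-plane condition, and a majorant-comparison lemma for the co-analytic part $g$ coming from the bounded dilatation $\omega=g'/h'$. Under the hypotheses, $p(z):=1-h(z)$ is holomorphic with $\text{Re}\,p\geq 0$ and $p(0)=1-a_0$, so normalizing to $P(z):=p(z)/(1-a_0)$ (the case $a_0=1$ being trivial) gives a Carath\'eodory function with $P(0)=1$. The classical coefficient bound $|P_n|\leq 2$ then yields $|a_n|\leq 2(1-a_0)$ for $n\geq 1$, so
\beas \sum_{n=1}^\infty |a_n|\rho^n \leq \frac{2(1-a_0)\rho}{1-\rho}. \eeas

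The real work is in establishing the comparison estimate
\beas \sum_{n=1}^\infty |b_n|\rho^n \leq k\sum_{n=1}^\infty |a_n|\rho^n \quad \text{for } \rho\leq 1/3, \eeas
where $k=(K-1)/(K+1)$. I would prove this by writing $\omega(z)=\sum_{j\geq 0}c_jz^j$ and extracting the coefficient identity $nb_n=\sum_{m=1}^n c_{n-m}\,m\,a_m$ from $g'=\omega h'$. Summing, swapping the order, and using $m+j\geq m$ gives
\beas \sum_{n=1}^\infty |b_n|\rho^n \leq \sum_{m=1}^\infty m|a_m|\rho^m \sum_{j=0}^\infty \frac{|c_j|\rho^j}{m+j} \leq \Bigl(\sum_{m=1}^\infty |a_m|\rho^m\Bigr)\Bigl(\sum_{j=0}^\infty |c_j|\rho^j\Bigr). \eeas
The punchline is that the classical Bohr inequality (\ref{e2}) applied to $\omega/k$ gives $\sum_{j=0}^\infty |c_j|\rho^j \leq k$ for $\rho\leq 1/3$, and since $(K+1)/(5K+1)\leq 1/3$ for every $K\geq 1$, this lemma is available throughout the claimed range. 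This comparison step is the principal technical obstacle.

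Combining the two estimates,
\beas a_0+\sum_{n=1}^\infty |a_n|\rho^n+\sum_{n=1}^\infty |b_n|\rho^n \leq a_0+(1+k)\cdot\frac{2(1-a_0)\rho}{1-\rho}, \eeas
and the right-hand side is at most $1$ precisely when $2(1+k)\rho\leq 1-\rho$, i.e., $\rho\leq 1/(3+2k)=(K+1)/(5K+1)$. For sharpness I would test the extremal pair $h(z)=a_0-2(1-a_0)z/(1-z)$ (saturating $|P_n|=2$ via $P(z)=(1+z)/(1-z)$) together with the constant dilatation $\omega\equiv k$, giving $g(z)=-2k(1-a_0)z/(1-z)$; evaluating at a positive real $z=\rho$ then produces equality at $\rho=(K+1)/(5K+1)$ and strict violation immediately beyond, so the radius cannot be improved.
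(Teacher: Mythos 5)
Your argument is correct and reaches the stated radius, but it handles the co-analytic tail by a genuinely different route than the paper. Theorem G itself is quoted from \cite{15a} without a reproof, yet the exact machinery the paper deploys for this situation is visible in the proof of Theorem \ref{T2}: the half-plane condition is converted via Lemma \ref{lem6} into $|a_n|\le 2(1-a_0)$ (as you do), and the tail $\sum_{n\ge1}|b_n|\rho^n$ is then controlled by the $\ell^2$-comparison of Lemma \ref{lem3}, $\sum_{n\ge1}|b_n|^2\rho^n\le k^2\sum_{n\ge1}|a_n|^2\rho^n$, followed by the Cauchy--Schwarz inequality, which yields $\sum_{n\ge1}|b_n|\rho^n\le 2k(1-a_0)\rho/(1-\rho)$ for \emph{all} $\rho<1$. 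You instead prove the direct majorant comparison $\sum_{n\ge1}|b_n|\rho^n\le k\sum_{n\ge1}|a_n|\rho^n$ by expanding $g'=\omega h'$ as a coefficient convolution, discarding the factor $m/(m+j)\le 1$, and applying the classical Bohr inequality to $\omega/k$ to get $\sum_j|c_j|\rho^j\le k$ for $\rho\le 1/3$; this is the same device the paper records as Lemma \ref{lem30} for the case $|g'|\le k|zh'|$. Your comparison is more elementary (no area integral, no Cauchy--Schwarz) and structurally cleaner, but it is only available for $\rho\le 1/3$; it suffices here because, as you correctly check, $(K+1)/(5K+1)\le 1/3$ for all $K\ge1$. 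The paper's $\ell^2$ route costs nothing extra in this theorem but remains valid on all of $[0,1)$, which is why it is the workhorse for companion results whose radii can exceed $1/3$. Both arguments reduce to $a_0+2(1+k)(1-a_0)\rho/(1-\rho)\le 1$, i.e.\ $\rho\le 1/(3+2k)=(K+1)/(5K+1)$, and your extremal pair $h(z)=a_0-2(1-a_0)z/(1-z)$ with constant dilatation $\omega\equiv k$ is, up to the substitution $z\mapsto -z$, the same extremal function ($h_4$, $g_4$) used in the paper's proof of Theorem \ref{T2}; for it the majorant sum exceeds $1$ for every $a_0\in(0,1)$ once $\rho>(K+1)/(5K+1)$, so the sharpness claim stands.
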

\noindent In light of the aforementioned findings, several questions naturally arise with regard to this study.
\begin{que}\label{Q1}
Can we establish a sharply improved Bohr inequality and a sharply refined Bohr-type inequality using non-negative $S_\rho(h)$ without compromising the radius?
\end{que}
\begin{que}\label{Q2} Can we establish the refined Bohr-Rogosinski inequality of \textrm{Theorem E}? \end{que}
\begin{que}\label{Q3} Can we establish several sharply improved versions of the Bohr inequality of \textrm{Theorem E} by the idea of replacing the initial coefficients of the majorant series with the absolute values of the analytic function and its derivative?   \end{que} 
\begin{que}\label{Q4} Can we establish the sharply refined version of the Bohr-type inequality in the context of \textrm{Theorem F} by replacing the initial coefficients of the majorant series by the absolute values of the analytic function and its derivative?  \end{que}
\begin{que}\label{Q5} Can we establish the sharp version of \textrm{Theorem F}? \end{que}
\begin{que}\label{Q6} Can we establish the sharp Bohr-Rogosinski inequality in the context of \textrm{Theorem G}? \end{que}
The purpose of this paper is primarily to provide the affirmative answers to Questions \ref{Q1}-\ref{Q6}.
\section{Some lemmas}
The following lemmas are needed for this paper and will be used to prove the main results.
\begin{lem}\label{lem1} \cite[Pick's invariant form of Schwarz’s lemma]{201} Suppose $f$ is analytic in $\mathbb{D}$ with $|f(z)|\leq1$, then 
\beas |f(z)|\leq \frac{|f(0)|+|z|}{1+|f(0)||z|}\quad\text{for}\quad z\in\mathbb{D}.\eeas\end{lem}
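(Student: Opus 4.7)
The plan is to reduce the inequality to the classical Schwarz lemma via a disk automorphism. Setting $a := f(0)$, if $|a|=1$ then the strict maximum modulus principle forces $f$ to be a unimodular constant and the claim is immediate, so I may assume $|a|<1$. In that case $\phi_a(w):=(w-a)/(1-\ol{a}w)$ is a M\"obius automorphism of $\D$ sending $a$ to $0$.

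First, I would normalise $f$ at the origin. Let $F:=\phi_a\circ f$; then $F$ is analytic on $\D$, satisfies $|F(z)|\leq 1$ (since $\phi_a$ maps $\D$ into itself), and has $F(0)=0$. The classical Schwarz lemma then yields $|F(z)|\leq |z|$ for every $z\in\D$.

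Next, I would invert and estimate. Since $\phi_a^{-1}(w)=(w+a)/(1+\ol{a}w)$, one has $f(z)=\Phi(F(z))$ with $\Phi(w):=(w+a)/(1+\ol{a}w)$. Combined with $|F(z)|\leq |z|=:r$, it remains to compute $M(r):=\max_{|w|\leq r}|\Phi(w)|$. Because $\Phi$ is a M\"obius self-map of $\D$, the maximum is attained on $|w|=r$; a direct expansion of numerator and denominator gives
\[
|\Phi(re^{i\theta})|^2 = \frac{r^2+|a|^2+s}{1+r^2|a|^2+s}, \qquad s := 2r\,\text{Re}(\ol{a}e^{i\theta}) \in [-2r|a|,\, 2r|a|].
\]
Differentiating in $s$ shows the ratio is strictly increasing in $s$, the numerator of the derivative being $(1-r^2)(1-|a|^2)>0$. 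Hence the maximum occurs at $s=2r|a|$, yielding $M(r)=(|a|+r)/(1+|a|r)$, which is exactly the stated bound.

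I do not anticipate any real obstacle here; the only mildly technical step is the monotonicity computation above. An even cleaner alternative would be to invoke the circle-preserving property of M\"obius maps: $\Phi$ sends $\{|w|\leq r\}$ onto a closed disc inside $\D$ whose boundary meets the ray from $0$ through $a$ at distances $(|a|\pm r)/(1\pm|a|r)$ from the origin, and the larger of these two values is precisely the required maximum.
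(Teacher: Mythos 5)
Your proof is correct and complete. The paper offers no proof of this lemma—it is quoted from Krantz \cite{201}—and your argument (normalize with the automorphism $\phi_a$, apply the classical Schwarz lemma to $F=\phi_a\circ f$, then maximize $|\phi_a^{-1}|$ over the disk $|w|\leq |z|$ via the monotonicity in $s=2\,\mathrm{Re}(\ol{a}w)$) is exactly the standard derivation of Pick's invariant form of the Schwarz lemma, with the unimodular-constant case correctly disposed of at the outset.
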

\begin{lem}\cite{7a,25}\label{lem2} Suppose $f$ is analytic in $\mathbb{D}$ with $|f(z)|\leq1$, then we have 
\beas \frac{\left|f^{(n)}(z)\right|}{n!}\leq \frac{1-|f(z)|^2}{(1-|z|)^{n-1}(1-|z|^2)}\quad\text{for each}\quad n\geq 1\quad \text{and}\quad |z|<1.\eeas\end{lem}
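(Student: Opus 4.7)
The plan is to derive the inequality by transplanting $f$ via a disk automorphism to the origin, thereby converting the derivative bound into a Taylor coefficient bound for a bounded analytic function, and then invoking a sharp Schur-type coefficient estimate. For $n = 1$ the statement is exactly the Schwarz-Pick inequality (a direct consequence of Schwarz's lemma applied through Lemma 2.1), so I would treat this as a base case. For $n \geq 2$, I would fix $z \in \D$, introduce the Möbius automorphism $\phi_z(u) = (u + z)/(1 + \bar z u)$ with $\phi_z(0) = z$, and set $F(u) := f(\phi_z(u))$. Then $F$ is analytic on $\D$ with $|F| \leq 1$ and $F(0) = f(z)$; writing $F(u) = \sum_{k \geq 0} c_k u^k$ gives $c_0 = f(z)$.

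Next I would establish an integral identity. Starting from Cauchy's formula for $f^{(n)}(z)$ on a small circle around $z$ and changing variables via $w = \phi_z(u)$, the computations $dw = (1-|z|^2)(1+\bar z u)^{-2}\,du$ and $w - z = u(1-|z|^2)/(1+\bar z u)$ lead, after routine simplification, to
\begin{equation*}
\frac{f^{(n)}(z)}{n!} = \frac{1}{(1-|z|^2)^n}\,[u^n]\bigl(F(u)(1+\bar z u)^{n-1}\bigr),
\end{equation*}
where $[u^n]$ denotes extraction of the $n$-th Taylor coefficient at zero. Expanding the binomial and noting that the $c_0$ term contributes nothing since $\binom{n-1}{n} = 0$, I obtain
\begin{equation*}
[u^n]\bigl(F(u)(1+\bar z u)^{n-1}\bigr) = \sum_{k=1}^{n} c_k \binom{n-1}{n-k}\bar z^{\,n-k}.
\end{equation*}

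The critical input is the sharp coefficient bound $|c_k| \leq 1 - |f(z)|^2$ for every $k \geq 1$. This would be proved via the Schur algorithm: writing $F(u) = (a + u H(u))/(1 + \bar a u H(u))$ with $a = f(z)$ and $H$ another Schur function, an inductive expansion yields $c_k = (1-|a|^2)\,p_k(h_0, h_1, \ldots, h_{k-1})$ where the polynomial $p_k$ has modulus at most $1$ on the Schur class; the inductive step uses Pick's invariant form (Lemma 2.1) applied to each auxiliary Schur function in the iteration. The estimate is sharp, as seen from $F(u) = (a + u^k)/(1 + \bar a u^k)$. Granted this coefficient inequality, the triangle inequality gives
\begin{equation*}
\left|[u^n]\bigl(F(u)(1+\bar z u)^{n-1}\bigr)\right| \leq (1-|f(z)|^2)\sum_{j=0}^{n-1}\binom{n-1}{j}|z|^j = (1-|f(z)|^2)(1+|z|)^{n-1},
\end{equation*}
and dividing by $(1-|z|^2)^n$ together with the simplification $(1+|z|)^{n-1}/(1-|z|^2)^n = 1/((1-|z|)^{n-1}(1-|z|^2))$ delivers the claimed inequality.

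The main obstacle is the sharp coefficient estimate $|c_k| \leq 1 - |a|^2$ for $k \geq 2$. The case $k = 1$ is immediate from Schwarz-Pick and the case $k = 2$ admits a short computation, but for general $k$ the bound relies on the recursive algebraic structure of the Schur parameters and is not routine. Indeed, if one bypasses this ingredient and instead uses Pick's invariant form directly to bound $|F(u) - F(0)| \leq (1-|a|^2)|u|/(1-|a||u|)$ on the contour, a Cauchy estimate only yields the weaker factor $(1+|f(z)|)$ in place of $(1-|f(z)|^2)$, which falls short of the lemma as stated by exactly the factor $(1-|f(z)|)$.
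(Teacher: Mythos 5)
The paper does not prove this lemma at all: it is quoted verbatim from Ruscheweyh (reference [25]) and Dai--Pan (reference [7a]), so there is no internal proof to compare against. Your argument is, in substance, Ruscheweyh's original proof, and its skeleton is correct: the change of variables $w=\phi_z(u)$ in Cauchy's formula does yield $f^{(n)}(z)/n!=(1-|z|^2)^{-n}[u^n]\bigl(F(u)(1+\bar z u)^{n-1}\bigr)$, the binomial expansion and the identity $(1+|z|)^{n-1}/(1-|z|^2)^n=1/\bigl((1-|z|)^{n-1}(1-|z|^2)\bigr)$ are right, and you have correctly isolated the one load-bearing ingredient, namely $|c_k|\le 1-|c_0|^2$ for all $k\ge 1$. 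The only soft spot is your justification of that ingredient: the Schur-algorithm route you sketch leaves the bound $|p_k|\le 1$ unestablished (and, as you note, a naive contour estimate via Pick's form only gives $1+|a|$ in place of $1-|a|^2$). But this coefficient inequality is classical and has a one-line proof by Wiener's averaging trick, which closes the gap cleanly: with $\omega=e^{2\pi i/k}$, the function $G(u)=\tfrac1k\sum_{j=0}^{k-1}F(\omega^j u)$ is again in the Schur class and equals $g(u^k)$ for a Schur function $g$ with $g(0)=c_0$ and $g'(0)=c_k$, so the Schwarz--Pick inequality at the origin gives $|c_k|=|g'(0)|\le 1-|g(0)|^2=1-|f(z)|^2$; sharpness follows from your example $F(u)=(a+u^k)/(1+\bar a u^k)$. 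With that substitution your proof is complete and is exactly the argument in the cited sources.
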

\begin{lem}\cite{15a}\label{lem3} Suppose that $h(z)=\sum_{n=0}^\infty a_nz^n$ and $g(z)=\sum_{n=0}^\infty b_nz^n$ are two analytic functions in $\mathbb{D}$ such that $|g’(z)|\leq k|h’(z)|$ in $\mathbb{D}$ and for some $k\in [0,1)$ with $|h(z)|\leq 1$. Then,
\beas\sum_{n=1}^\infty |b_n|^2\rho^n\leq k^2 \sum_{n=1}^\infty |a_n|^2\rho^n\;\;\text{for}\;\;|z|=\rho<1.\eeas
\end{lem}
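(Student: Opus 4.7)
The strategy is to reduce the claimed series inequality to the pointwise bound $|g'(z)|^2 \le k^2|h'(z)|^2$ via Parseval on concentric circles, then to promote the resulting inequality (which naturally comes with $\rho^{2n}$) into one with $\rho^n$ by a one-line ODE argument.

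First, I would integrate the pointwise inequality $|g'(z)|^2\le k^2|h'(z)|^2$ over the subdisk $\mathbb{D}_\rho$. Applying Parseval on each circle $|z|=r$ to the expansions $g'(z)=\sum_{n\ge1} nb_n z^{n-1}$ and $h'(z)=\sum_{n\ge1} na_n z^{n-1}$, and then integrating against $r\,dr$ over $[0,\rho]$, one obtains the familiar area-type comparison
\[
\sum_{n=1}^{\infty} n\,|b_n|^2\,\rho^{2n}\;\le\; k^2 \sum_{n=1}^{\infty} n\,|a_n|^2\,\rho^{2n}.
\]
This is the same computation one uses to identify $S_\rho(h)/\pi$ with $\sum n|a_n|^2\rho^{2n}$, so it is essentially $S_\rho(g)\le k^2 S_\rho(h)$.

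Next, I would set $F(t):=\sum_{n=1}^{\infty}|b_n|^2 t^n$ and $G(t):=\sum_{n=1}^{\infty}|a_n|^2 t^n$. Both series converge on $[0,1)$ because $h,g$ are analytic in $\mathbb{D}$, and the hypothesis $|h|\le1$ in particular gives $\sum|a_n|^2<\infty$. Substituting $t=\rho^2$ in the displayed inequality rewrites it as $tF'(t)\le k^2\, tG'(t)$, hence $F'(t)\le k^2 G'(t)$ on $(0,1)$. Integrating from $0$ to $\rho$ and using $F(0)=G(0)=0$ produces
\[
F(\rho)\;\le\; k^2 G(\rho),
\]
which is exactly the asserted inequality $\sum_{n\ge1}|b_n|^2\rho^n\le k^2\sum_{n\ge1}|a_n|^2\rho^n$.

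I do not anticipate any real obstacle here; the only care needed is the routine justification of termwise Parseval, differentiation, and integration of power series on compact subsets of $\mathbb{D}$. Conceptually, the main observation is that the hypothesis $|g'|\le k|h'|$ is tailor-made to produce a comparison in the $\sum n|c_n|^2\rho^{2n}$ form via area integrals, and the short ODE trick $F'\le k^2 G'$ together with the vanishing at $0$ is precisely what converts it into the $\sum|c_n|^2\rho^n$ form stated in the lemma.
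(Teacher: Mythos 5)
The paper does not prove this lemma itself---it is quoted verbatim from \cite{15a}---so there is no in-paper argument to compare against. Your proof is correct and follows essentially the standard route used in the cited source: circle-wise Parseval applied to the pointwise bound $|g'|\le k|h'|$ yields the area comparison $\sum n|b_n|^2\rho^{2n}\le k^2\sum n|a_n|^2\rho^{2n}$, and one further integration (your substitution $t=\rho^2$ and the observation $F'\le k^2G'$ with $F(0)=G(0)=0$) converts this into the stated $\sum|b_n|^2\rho^n\le k^2\sum|a_n|^2\rho^n$.
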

By employing the concept of quasi-subordination and the result derived in \cite{2}, Liu {\it et al.} \cite{20} have established the following result.
\begin{lem}\cite[Proof of Theorem 2]{20}\label{lem30} Suppose that $h(z)=\sum_{n=0}^\infty a_nz^n$ and $g(z)=\sum_{n=0}^\infty b_nz^n$ are two analytic functions in $\mathbb{D}$ such that $|g’(z)|\leq k|zh’(z)|$ in $\mathbb{D}$ for $k\in [0,1)$. Then,
\beas\sum_{n=1}^\infty n |b_n|\rho^{n-1}\leq k \sum_{n=1}^\infty n|a_n|\rho^n\quad\text{for}\quad |z|=\rho\leq 1/3.\eeas
\end{lem}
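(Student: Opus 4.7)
The plan is to rewrite the hypothesis as a factorization $g'(z) = F(z)\cdot zh'(z)$ with $F$ an analytic function on $\D$ of sup-norm at most $k$, and then unpack the Cauchy product together with the classical Bohr inequality applied to $F/k$.

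First, I would consider the quotient $F(z) := g'(z)/(zh'(z))$. A priori this is only meromorphic in $\D$, with possible poles at the zeros of $zh'(z)$; however, the pointwise bound $|g'(z)| \leq k|zh'(z)|$ forces $g'$ to vanish to at least the same order at every such zero, so all singularities are removable and $F$ extends to an analytic function on $\D$ with $|F(z)| \leq k$. In particular $g'(0)=0$, whence $b_1 = 0$. Writing $F(z) = \sum_{m=0}^\infty c_m z^m$ and comparing coefficients of $z^{n-1}$ in the identity $g'(z) = F(z)\cdot zh'(z)$ gives, for each $n \geq 1$,
\beas nb_n \;=\; \sum_{k=1}^{n-1} k\, a_k\, c_{n-1-k}.\eeas

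Taking absolute values, applying the triangle inequality, and reindexing by $m = n-1-k$ yields
\beas \sum_{n=1}^\infty n|b_n|\rho^{n-1} &\leq& \sum_{n=2}^\infty \rho^{n-1}\sum_{k=1}^{n-1} k|a_k|\,|c_{n-1-k}| \\ &=& \left(\sum_{k=1}^\infty k|a_k|\rho^k\right)\left(\sum_{m=0}^\infty |c_m|\rho^m\right).\eeas
Since $F/k$ is an analytic self-map of $\D$, the classical Bohr inequality (\ref{e2}) applied to $F/k$ gives $\sum_{m=0}^\infty |c_m|\rho^m \leq k$ whenever $\rho \leq 1/3$. Substituting this bound into the display above delivers the desired inequality; the trivial case $k=0$ is handled separately (the hypothesis then forces $g'\equiv 0$).

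The main obstacle is conceptual rather than computational: one must recognize that the extra factor $z$ in the hypothesis is precisely what makes the quotient $F = g'/(zh')$ an \emph{analytic} self-map of $\D$, which in turn is exactly what makes the classical Bohr radius $1/3$ available. The same factor also produces the shift asymmetry between $\rho^{n-1}$ on the left and $\rho^n$ on the right of the conclusion. Once the factorization is in place, the remainder is a Cauchy-product bound, the triangle inequality, and a single invocation of (\ref{e2}).
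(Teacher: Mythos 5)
Your proof is correct, and every step checks out: the removable-singularity argument for $F=g'/(zh')$ is sound (handling $h'\equiv 0$ trivially), the coefficient identity $nb_n=\sum_{j=1}^{n-1}j\,a_j c_{n-1-j}$ is right, and the Cauchy-product bound combined with the classical Bohr inequality applied to $F/k$ delivers exactly the stated estimate for $\rho\le 1/3$. The paper itself gives no proof of this lemma; it only cites the proof of Theorem 2 in Liu--Ponnusamy--Wang \cite{20}, remarking that the result is obtained ``by employing the concept of quasi-subordination and the result derived in \cite{2}.'' Your argument is the same mechanism made self-contained: the factorization $g'(z)=F(z)\cdot zh'(z)$ with $\|F\|_\infty\le k$ is precisely the quasi-subordination $g'(z)/k\prec_q zh'(z)$ relative to $F(z)/k$ with $\omega(z)=z$, and your Cauchy-product-plus-Bohr step is a direct proof of the special case ($\omega=\mathrm{id}$) of the quasi-subordination Bohr inequality that \cite{20} imports from \cite{2}. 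What your route buys is transparency --- the reader sees exactly where the radius $1/3$ enters (the classical Bohr radius for the bounded factor $F/k$) without needing the general quasi-subordination machinery; what the cited route buys is brevity and the extra generality of an arbitrary Schwarz function $\omega$, which is not needed here.
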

\begin{lem}\cite{18a}\label{lem5} Suppose $f$ is analytic in $\mathbb{D}$ with $|f(z)|\leq1$, then for any $N\in\mathbb{N}$, the following inequality holds:
\beas\sum_{n=N}^\infty |a_n|\rho^n+\text{sgn}(t)\sum_{n=1}^t |a_n|^2 \frac{\rho^N}{1-\rho}+\left(\frac{1}{1+a_0}+\frac{\rho}{1-\rho}\right)\sum_{n=t+1}^\infty |a_n|^2 \rho^{2n}\leq \frac{(1-|a_0|^2)\rho^N}{1-\rho}\eeas
for $\rho\in[0,1)$, where $t=\lfloor (N-1)/2\rfloor$ and $\lfloor x\rfloor$ denotes the largest integer not exceeding the real number $x$.
\end{lem}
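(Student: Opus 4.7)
Proof proposal:

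The plan is to derive the inequality from two classical coefficient estimates for a bounded analytic function $f(z) = \sum_{n=0}^\infty a_n z^n$ with $|f(z)| \le 1$ in $\mathbb{D}$: the Schwarz-Pick coefficient bound $|a_n| \le 1 - |a_0|^2$ for every $n \ge 1$, and the Parseval inequality $\sum_{n=1}^\infty |a_n|^2 \le 1 - |a_0|^2$. Both are standard consequences of $|f|\le 1$.

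The structural role of the splitting index is that $t = \lfloor (N-1)/2 \rfloor$ is the smallest integer with $2(t+1) \ge N$, so $\rho^{2n} \le \rho^N$ for every $n \ge t+1$. This provides the natural cut-off: beyond $t$, the quadratic terms $|a_n|^2 \rho^{2n}$ can be compared with a geometric tail governed by $\rho^N/(1-\rho)$, while for $n \le t$ only the raw sum $\sum_{n=1}^t |a_n|^2$ appears, which explains the factor $\operatorname{sgn}(t)$ handling the boundary case $t = 0$.

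My plan has three steps. First, I would establish the case $N = 1$, which reduces to the refined Bohr-type inequality
\begin{equation*}
\sum_{n=1}^\infty |a_n|\rho^n + \left(\tfrac{1}{1+|a_0|}+\tfrac{\rho}{1-\rho}\right) \sum_{n=1}^\infty |a_n|^2 \rho^{2n} \le \tfrac{(1-|a_0|^2)\rho}{1-\rho},
\end{equation*}
which lies at the core of the proof of \textrm{Theorem D}. Then, for general $N$, I would start from this case, isolate the contribution from $n \ge N$ in the first sum, apply $\rho^{2n} \le \rho^N$ for $n \ge t+1$ to regroup the tail of the quadratic sum in $\rho^N/(1-\rho)$ form, and absorb the contributions from indices $n \le t$ into the $\operatorname{sgn}(t)$ term via Parseval's bound on $\sum_{n=1}^t |a_n|^2$. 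Finally, I would verify the algebraic bookkeeping, in particular that the prefactor $1/(1+|a_0|)+\rho/(1-\rho)$ on the quadratic tail is preserved throughout.

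The hard step is the first one. A term-by-term estimate of the form $|a_n|\rho^n + (\cdots)|a_n|^2\rho^{2n} \le (1-|a_0|^2)\rho^n$ already fails for the extremal $f(z) = z$, so the prefactor $1/(1+|a_0|)+\rho/(1-\rho)$ can only emerge after summation over $n$. The natural route is the Schur reduction $f(z) = (a_0 + z\,\omega(z))/(1 + \overline{a_0}\,z\,\omega(z))$ with $|\omega|\le 1$, which iterates Schwarz-Pick and yields a recursive bound on $|a_n|$ in terms of lower-order coefficients; the weight $1/(1+|a_0|)$ arises precisely from this iteration. The Blaschke extremal $f(z) = (z+a_0)/(1+a_0 z)$ gives equality throughout and serves as a numerical sanity check. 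The degenerate cases $t = 0$ (i.e., $N \in \{1,2\}$) and $|a_0| = 0$ should be checked separately for completeness.
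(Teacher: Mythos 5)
First, a point of reference: the paper does not prove Lemma \ref{lem5} at all --- it is quoted from \cite{18a}, where it is deduced from the pointwise refined coefficient inequality of Ponnusamy--Vijayakumar--Wirths \cite{23a}, namely $|a_n|\le 1-|a_0|^2-\sum_{k=1}^{\lfloor (n-1)/2\rfloor}|a_k|^2-\frac{1}{1+|a_0|}|a_{n/2}|^2$, the last term being present only when $n$ is even. Your closing paragraph correctly diagnoses the heart of the matter: a naive term-by-term bound fails, and the weight $\frac{1}{1+|a_0|}$ can only come from a Schur-type iteration of Schwarz--Pick that bounds $|a_n|$ recursively in terms of the lower-order coefficients. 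Your Blaschke sanity check is also apt: $(z+a_0)/(1+a_0z)$ saturates the pointwise inequality for every $n$ and hence gives equality in the lemma for every $N$, not just $N=1$.

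The genuine gap is in your passage from $N=1$ to general $N$; as described it cannot work. The summed $N=1$ inequality is strictly weaker than the family of pointwise inequalities and does not imply the cases $N\ge 2$: the right-hand side collapses from $\rho/(1-\rho)$ to $\rho^N/(1-\rho)$, and no rearrangement of the $N=1$ display recovers that. Worse, the two specific manipulations you propose go the wrong way. Applying $\rho^{2n}\le\rho^N$ for $n\ge t+1$ \emph{enlarges} the quadratic tail on the left-hand side, so you would be proving a strictly stronger inequality --- which is false, since the Blaschke extremal already gives equality in the stated one (check $N=2$, $t=0$ directly). And ``absorbing'' $\sum_{n=1}^t|a_n|^2$ via Parseval costs up to $1-|a_0|^2$, i.e., the entire right-hand side. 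The workable route is the one in \cite{18a}: apply the pointwise inequality above for each $n\ge N$, multiply by $\rho^n$, sum, and interchange the order of summation. The index $k$ in the square sum then contributes $\sum_{n\ge\max(N,\,2k+1)}\rho^n=\rho^{\max(N,2k+1)}/(1-\rho)$, and the dichotomy $\max(N,2k+1)=N$ for $k\le t$ versus $2k+1$ for $k\ge t+1$ is precisely what produces the split at $t=\lfloor(N-1)/2\rfloor$, the factor $\text{sgn}(t)$, and the two different weights $\rho^N/(1-\rho)$ and $\frac{1}{1+|a_0|}+\frac{\rho}{1-\rho}$. So you should replace ``establish the case $N=1$ and bootstrap'' by ``establish the pointwise coefficient inequality for every $n$ and sum''; the $N=1$ case is then just the instance $t=0$ rather than the engine.
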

\begin{lem}\cite{1}\label{lem6} If $p(z)=\sum_{k=0}^\infty p_k z^k$ is analytic in $\D$ such that $\text{Re}\;p(z)>0$ in $\D$, then $|p_k|\leq 2\;\text{Re}\;p_0$ for all $k\geq 1$.\end{lem}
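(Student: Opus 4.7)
The plan is to prove the classical Carath\'eodory coefficient inequality by invoking the Herglotz representation for analytic functions with positive real part. First I would normalize: since $\alpha:=\text{Re}\,p_0>0$, set
\beas
q(z)=\frac{p(z)+\overline{p_0}}{2\alpha},
\eeas
which satisfies $q(0)=1$ and $\text{Re}\,q(z)>0$ on $\D$. Writing $q(z)=1+\sum_{k=1}^\infty q_kz^k$ one has $q_k=p_k/(2\alpha)$, so it suffices to establish $|q_k|\leq 2$ for every $k\geq 1$.

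Next I would apply the Herglotz representation theorem: any analytic $q$ on $\D$ with $\text{Re}\,q>0$ and $q(0)=1$ admits
\beas
q(z)=\int_0^{2\pi}\frac{e^{i\theta}+z}{e^{i\theta}-z}\,d\mu(\theta),
\eeas
where $\mu$ is a Borel probability measure on $[0,2\pi]$. Expanding the Poisson kernel as
\beas
\frac{e^{i\theta}+z}{e^{i\theta}-z}=1+2\sum_{k=1}^\infty e^{-ik\theta}z^k,
\eeas
which converges uniformly on compact subsets of $\D$, I would interchange summation and integration to read off
\beas
q_k=2\int_0^{2\pi}e^{-ik\theta}\,d\mu(\theta),\qquad k\geq 1.
\eeas
The triangle inequality then gives $|q_k|\leq 2\mu([0,2\pi])=2$, which translates back to $|p_k|\leq 2\,\text{Re}\,p_0$.

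As an alternative route for $k=1$ one can invoke Schwarz's lemma directly: the function $\phi(z)=(p_0-p(z))/(\overline{p_0}+p(z))$ is a self-map of $\D$ fixing the origin, because the Cayley-type M\"obius transformation $w\mapsto(p_0-w)/(\overline{p_0}+w)$ carries the open right half-plane onto $\D$ and sends $p_0$ to $0$. A short power-series comparison at the origin then yields $|p_1|\leq 2\,\text{Re}\,p_0$. This argument does not, however, extend as cleanly to higher coefficients, so the Herglotz route is the preferred one.

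The only genuine ingredient beyond bookkeeping is the Herglotz representation itself, which is a standard classical fact (provable via normal-families/Helly-selection from the Poisson integral of the positive harmonic function $\text{Re}\,q$ on sub-disks), so there is no real obstacle. Indeed, the lemma is cited in the excerpt as a well-known result of Carath\'eodory, and the proof strategy above is essentially the shortest self-contained derivation.
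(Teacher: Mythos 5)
The paper offers no proof of this lemma: it is quoted from \cite{1} as the classical Carath\'eodory coefficient inequality. So the only question is whether your argument is sound, and as written it is not: your normalization introduces a factor-of-two loss that the rest of the argument never recovers. With $q(z)=(p(z)+\overline{p_0})/(2\alpha)$ you correctly get $q(0)=1$ and $q_k=p_k/(2\alpha)$ for $k\geq 1$, but then the target inequality $|p_k|\leq 2\alpha$ is equivalent to $|q_k|\leq 1$, not to $|q_k|\leq 2$ as you claim. The Herglotz representation, applied as you apply it using only $\text{Re}\,q>0$ and $q(0)=1$, delivers exactly $|q_k|\leq 2\mu([0,2\pi])=2$; translating back through $p_k=2\alpha q_k$ this yields only $|p_k|\leq 4\,\text{Re}\,p_0$, which is strictly weaker than the lemma. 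The sentence ``it suffices to establish $|q_k|\leq 2$'' and the final ``which translates back to $|p_k|\leq 2\,\text{Re}\,p_0$'' are where the argument breaks.

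The gap is easily repaired in either of two ways. (i) Use the standard normalization $q(z)=(p(z)-i\,\text{Im}\,p_0)/\alpha$, which satisfies $q(0)=1$, $\text{Re}\,q=\text{Re}\,p/\alpha>0$, and $q_k=p_k/\alpha$; then the Herglotz bound $|q_k|\leq 2$ gives precisely $|p_k|\leq 2\,\text{Re}\,p_0$. (ii) Keep your $q$ but use the information you discarded: since $\text{Re}\,\overline{p_0}=\alpha$, your $q$ actually satisfies $\text{Re}\,q=(\text{Re}\,p+\alpha)/(2\alpha)>1/2$, so $2q-1$ has positive real part and equals $1$ at the origin, whence $|2q_k|\leq 2$, i.e. $|q_k|\leq 1$, which is what you need. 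Your Schwarz-lemma aside for $k=1$ is correct as stated (one computes $\phi'(0)=-p_1/(2\alpha)$, so $|\phi'(0)|\leq 1$ gives $|p_1|\leq 2\,\text{Re}\,p_0$), but it does not rescue the higher coefficients.
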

\section{Main results}
In the following, we obtain the sharp improved version of Bohr inequality in the settings of \textrm{Theorem E} by utilizing the non-negative quantity 
$S_\rho(h)$ without reducing the radius. 
\begin{theo}\label{T1} Suppose that $f(z)=h(z)+\ol{g(z)}=\sum_{n=0}^\infty a_n z^n+\ol{\sum_{n=1}^\infty b_n z^n}$ is a sense-preserving $K$-quasiconformal harmonic mapping in $\mathbb{D}$, where $h(z)$ is bounded in $\mathbb{D}$. Then
\beas\sum_{n=0}^\infty |a_n|\rho^n+\sum_{n=1}^\infty |b_n|\rho^n+\frac{8K^2(3K+1)^2}{(5K+1)^2(K+1)^2}\frac{S_\rho(h)}{\pi}\leq \Vert h(z)\Vert_\infty\quad\text{for}\quad \rho\leq \rho_0=\frac{K+1}{5K+1}.\eeas
The numbers $8K^2(3K+1)^2/((5K+1)^2(K+1)^2)$ and  $(K+1)/(5K+1)$ cannot be replaced by larger values.
\end{theo}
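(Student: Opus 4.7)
I would normalise so that $\|h\|_\infty = 1$ and set $a = |a_0| \in [0, 1)$, $k = (K-1)/(K+1) \in [0, 1)$. Three estimates drive the argument. Since $|h(z)| \leq 1$, the classical Schwarz--Pick bound $|a_n| \leq 1-a^2$ ($n \geq 1$) yields
\[\sum_{n=1}^\infty |a_n|\rho^n \leq (1-a^2)\frac{\rho}{1-\rho}, \qquad \frac{S_\rho(h)}{\pi} = \sum_{n=1}^\infty n|a_n|^2 \rho^{2n} \leq (1-a^2)^2\frac{\rho^2}{(1-\rho^2)^2}.\]
For the co-analytic coefficients, Lemma \ref{lem3} gives $\sum_{n\geq 1}|b_n|^2\rho^n \leq k^2 \sum_{n\geq 1} |a_n|^2 \rho^n$, and splitting $|b_n|\rho^n = |b_n|\rho^{n/2}\cdot\rho^{n/2}$ in Cauchy--Schwarz, combined with $\sum|a_n|^2\rho^n \leq (1-a^2)^2\rho/(1-\rho)$, produces
\[\sum_{n=1}^\infty |b_n|\rho^n \leq k(1-a^2)\frac{\rho}{1-\rho}.\]

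\textbf{Reduction to a scalar inequality.} Writing $\lambda_0 = 8K^2(3K+1)^2/((5K+1)^2(K+1)^2)$ and $\rho_0 = (K+1)/(5K+1)$, these bounds reduce the theorem to
\[\Phi(a,\rho) := a + (1+k)(1-a^2)\frac{\rho}{1-\rho} + \lambda_0(1-a^2)^2\frac{\rho^2}{(1-\rho^2)^2} \leq 1\]
for $a \in [0,1)$ and $\rho \in (0, \rho_0]$. Because $\rho/(1-\rho)$ and $\rho^2/(1-\rho^2)^2$ are both increasing in $\rho$, it suffices to check $\rho = \rho_0$. Two clean identities collapse the computation: $(1+k)\rho_0/(1-\rho_0) = 1/2$ (which, when solved for $\rho_0$, recovers $\rho_0 = 1/(3+2k) = (K+1)/(5K+1)$), and, using $1-\rho_0^2 = 8K(3K+1)/(5K+1)^2$, the evaluation $\lambda_0\rho_0^2/(1-\rho_0^2)^2 = 1/8$. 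Substituting, the required inequality becomes $a + (1-a^2)/2 + (1-a^2)^2/8 \leq 1$, which I would verify by the polynomial factorisation $8 - 8a - 4(1-a^2) - (1-a^2)^2 = (1-a)^3(a+3) \geq 0$ on $[0,1]$, with equality only at $a=1$.

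\textbf{Sharpness.} That $\rho_0$ cannot be enlarged is immediate from Theorem E, since the extra nonnegative term $\lambda_0 S_\rho(h)/\pi$ only strengthens the inequality. To show $\lambda_0$ is sharp, I plan to use the extremal family $h_a(z) = (a-z)/(1-az)$ together with $g_a$ determined by $g_a' = kh_a'$ (explicitly $g_a(z) = k(h_a(z)-a)$), which produces a genuine sense-preserving $K$-quasiconformal $f_a = h_a + \overline{g_a}$ with $\|h_a\|_\infty = 1$. Expanding the LHS of the theorem at $\rho = \rho_0$ in the parameter $\epsilon = 1-a$ yields an asymptotic of the form $1 + C(\lambda)\epsilon^2 + O(\epsilon^3)$; tracking $C(\lambda)$ should show $C(\lambda_0) \leq 0$ while $C(\lambda) > 0$ for $\lambda > \lambda_0$, so any strictly larger $\lambda$ forces the LHS above $1$ for $a$ sufficiently close to $1$.

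\textbf{Main obstacle.} The reduction and the polynomial identity at $\rho_0$ are essentially mechanical. The delicate point is sharpness of $\lambda_0$: the three coefficient estimates of the setup are each individually loose except in the limit $a \to 1$, so the argument must confirm that they become simultaneously tight on the chosen extremal at exactly the right rate. Calibrating the asymptotics so that the $O(\epsilon^2)$ coefficient of $1 - \mathrm{LHS}$ vanishes precisely at $\lambda = \lambda_0$ and not at some larger value is where the real technical care lies.
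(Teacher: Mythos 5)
Your derivation of the inequality itself is correct and follows the paper's route: the same three estimates (Schwarz--Pick for $|a_n|\leq 1-a^2$, Lemma \ref{lem3} plus Cauchy--Schwarz for $\sum|b_n|\rho^n\leq k(1-a^2)\rho/(1-\rho)$, and the area bound), the same reduction to $\Phi(a,\rho)\leq 1$, and monotonicity in $\rho$. Your endgame is actually cleaner than the paper's: the identities $(1+k)\rho_0/(1-\rho_0)=1/2$ and $\lambda_0\rho_0^2/(1-\rho_0^2)^2=1/8$ reduce everything to $8-8a-4(1-a^2)-(1-a^2)^2=(1-a)^3(a+3)\geq 0$, whereas the paper runs a two-step monotonicity analysis of an auxiliary function $F_2(a)$. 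Your observation that the sharpness of the radius is inherited directly from Theorem E (since the added term is nonnegative) is also valid and shorter than the paper's explicit computation with the M\"obius family.

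The genuine gap is in your plan for the sharpness of the constant $\lambda_0$. If you carry out the expansion you propose for $h_a(z)=(a-z)/(1-az)$, $g_a'=kh_a'$, at $\rho=\rho_0$ with $\epsilon=1-a$, you get $1+C(\lambda)\epsilon^2+O(\epsilon^3)$ with $C(\lambda)=\frac{\lambda}{2\lambda_0}-\frac{1}{2}-\frac{\rho_0}{1-\rho_0}$: the cross term $(1+k)(1-a^2)\bigl(\tfrac{\rho_0}{1-a\rho_0}-\tfrac{\rho_0}{1-\rho_0}\bigr)$ contributes $-\epsilon^2\rho_0/(1-\rho_0)$ at second order. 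Hence $C(\lambda_0)=-\rho_0/(1-\rho_0)<0$ strictly, and $C(\lambda)>0$ only for $\lambda>\lambda_0\frac{1+\rho_0}{1-\rho_0}=\lambda_0\frac{k+2}{k+1}$, not for all $\lambda>\lambda_0$. So this family does not certify optimality of $\lambda_0$; it only rules out constants above $\lambda_0(k+2)/(k+1)$. This is not a defect you could have calibrated away: at $K=1$ (so $k=0$, $g\equiv 0$) your threshold becomes $2\lambda_0=16/9$, which by Theorem B is the true sharp constant, so $\lambda_0=8/9$ is demonstrably \emph{not} optimal in that case. You should be aware that the paper has the same hole: its sharpness computation concludes only that $(K+1)/(5K+1)$ is best possible and never addresses the constant, despite the statement of the theorem asserting both.
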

\begin{proof} For simplicity, we suppose that $\Vert h(z)\Vert_\infty\leq 1$. In view of \textrm{lemma \ref{lem2}}, we have $|a_n|\leq 1-|a_0|^2$ for $n\geq 1$. Since $f$ is locally univalent and $K$-quasiconformal sense-preserving harmonic mapping on $\mathbb{D}$, Schwarz's lemma gives that the dilatation $\omega=g'/h'$ is analytic in $\mathbb{D}$ and $|\omega(z)|\leq k$, {\it i.e.}, $|g'(z)|\leq k|h'(z)|$ in $\mathbb{D}$, where $K = (1+k)/(1-k) \geq 1$, $k\in[0,1)$. Let $|a_0|=a\in[0,1)$.
By \textrm{Lemma \ref{lem3}}, we have 
\bea\label{E1} \sum_{n=1}^\infty |b_n|^2\rho^n\leq k^2 \sum_{n=1}^\infty |a_n|^2\rho^n\leq k^2(1-a^2)^2\frac{\rho}{1-\rho}.\eea
Using (\ref{E1}) and in view of Cauchy-Schwarz inequality, we have 
\beas \sum_{n=1}^\infty |b_n|\rho^n\leq \left(\sum_{n=1}^\infty |b_n|^2\rho^n\right)^{1/2}\left(\sum_{n=1}^\infty\rho^n\right)^{1/2} \leq k(1-a^2)\frac{\rho}{1-\rho}.\eeas 
From the definition of $S_\rho(h)$, we have
\bea\label{E2} \frac{S_\rho(h)}{\pi}=\sum_{n=1}^\infty n|a_n|^2 \rho^{2n}\leq (1-a^2)^2\sum_{n=1}^\infty n\rho^{2n}=(1-a^2)^2\frac{\rho^2}{(1-\rho^2)^2}.\eea
Therefore,
\beas \sum_{n=0}^\infty |a_n|\rho^n+\sum_{n=1}^\infty |b_n|\rho^n+\lambda\frac{S_\rho(h)}{\pi}&\leq& a+(1+k)(1-a^2)\frac{\rho}{1-\rho}+\lambda(1-a^2)^2\frac{\rho^2}{(1-\rho^2)^2}\\
&=&1+F_1(a,\rho),\eeas
where 
\beas F_1(a,\rho)&=&(1+k)(1-a^2)\frac{\rho}{1-\rho}+(1-a^2)^2\frac{\lambda\rho^2}{(1-\rho^2)^2}-(1-a)\\
&=&\frac{(1-a^2)}{2}\left(1+\left(\frac{2(1+k)\rho}{1-\rho}-1\right)+(1-a^2)\frac{2\lambda\rho^2}{(1-\rho^2)^2}-\frac{2}{1+a}\right).\eeas
Differentiating partially with respect to $\rho$, we get
\beas \frac{\pa}{\pa \rho} F_1(a,\rho)=\frac{(1+k)(1-a^2)}{(1-\rho)^2}+(1-a^2)^2\frac{2\lambda\rho(1+\rho^2)}{(1-\rho^2)^3}>0.\eeas 
Therefore $F_1(a,\rho)$ is a monotonically increasing function of $\rho$ in $[0,1)$ and it follows that $F_1(a,\rho)\leq F_1(a,\rho_0)$ for $\rho\leq \rho_0=1/(2k+3)$. Now,
\beas F_1(a,\rho_0)=\frac{(1-a^2)}{2}\left(1+(1-a^2)\frac{\lambda(2k+3)^2}{8(k+1)^2(k+2)^2}-\frac{2}{1+a}\right).\eeas
Let $F_2(a)=1+\lambda(2k+3)^2(1-a^2)/(8(k+1)^2(k+2)^2)-2/(1+a)$, $a\in[0,1)$. It is easy to see that
\beas F_2(0)=\frac{\lambda(2k+3)^2}{8(k+1)^2(k+2)^2}-1\quad\text{and}\quad \lim_{a\to1^-}F_2(a)=0.\eeas
Differentiating $F_2(a)$ with respect to $a$, we have
\beas F_2'(a)&=&\frac{-2a\lambda(2k+3)^2}{8(k+1)^2(k+2)^2}+\frac{2}{(1+a)^2}\\
&=&\frac{2}{(1+a)^2}\left(1-\frac{\lambda(2k+3)^2}{8(k+1)^2(k+2)^2}a(1+a)^2\right)\\
&\geq&\frac{2}{(1+a)^2}\left(1-\frac{\lambda(2k+3)^2}{2(k+1)^2(k+2)^2}\right)\geq 0,\eeas 
if $\lambda\leq 2(k+1)^2(k+2)^2/(2k+3)^2=8K^2(3K+1)^2/((5K+1)^2(K+1)^2)$.
Therefore, $F_2(a)$ is a monotonically increasing function of $a$ in $[0,1)$ and it follows that $F_2(a)\leq 0$ for $a\in[0,1)$ and $\lambda\leq 8K^2(3K+1)^2/((5K+1)^2(K+1)^2)$. Therefore, we have
\beas \sum_{n=0}^\infty |a_n|\rho^n+\sum_{n=1}^\infty |b_n|\rho^n+\frac{8K^2(3K+1)^2}{(5K+1)^2(K+1)^2}\frac{S_\rho(h)}{\pi}\leq 1\eeas
for $\rho\leq \rho_0=1/(2k+3)=(K+1)/(5K+1)$.\\[2mm]
\indent To prove the sharpness of the result, we consider the function $f_1(z)=h_1(z)+\ol{g_1(z)}$ in $\mathbb{D}$ such that 
\beas h_1(z)=\frac{a-z}{1-az}=A_0+\sum_{n=1}^\infty A_n z^n,\eeas
where $A_0=a$, $A_n=-(1-a^2)a^{n-1}$ for $n\geq 1$, $a\in[0,1)$ and $g_1(z)=\lambda k \sum_{n=1}^\infty A_n z^n$, where $|\lambda|=1$ and $k=(K-1)/(K+1)$.
Thus,
\beas S_1:&=&\sum_{n=0}^\infty |A_n|\rho^n+\sum_{n=1}^\infty |k\lambda A_n|\rho^n+\frac{8K^2(3K+1)^2}{(5K+1)^2(K+1)^2}\frac{S_\rho(h_1)}{\pi}\\
&=&a+\frac{(1+k)(1-a^2)}{a}\sum_{n=1}^\infty (a\rho)^{n}+\frac{8K^2(3K+1)^2}{(5K+1)^2(K+1)^2}\sum_{n=1}^\infty n|A_n|^2 \rho^{2n}\\
&=& 1+(1-a) F_3(a,\rho),\eeas
where 
\beas F_3(a,\rho)=\frac{2K(1+a)\rho}{(K+1)(1-a\rho)}+\frac{8K^2(3K+1)^2}{(5K+1)^2(K+1)^2}\frac{(1-a^2)(1+a)\rho^2}{(1-a^2\rho^2)^2}-1.\eeas
Differentiating partially $F_3(a,\rho)$ with respect to $\rho$, we have 
\beas \frac{\pa}{\pa \rho}F_3(a,\rho)=\frac{2K(1+a)}{(K+1)(1-a\rho)^2}+\frac{16K^2(3K+1)^2}{(5K+1)^2(K+1)^2}\frac{(1-a^2)(1+a)\rho(1+\rho^2)}{(1-a^2\rho^2)^3}>0\eeas
for $\rho\in(0,1)$.
Therefore, $F_3(a,\rho)$ is a strictly increasing function of $\rho\in(0,1)$. Thus, for $\rho>(K+1)/(5K+1)$, we have
\beas F_3(a,\rho)&>&F_3(a,(K+1)/(5K+1))\\&=&\frac{2K(1+a)}{4K}+\frac{8K^2(3K+1)^2(1-a^2)(1+a)}{((5+a)K+1+a)^2((5-a)K+1-a)^2}-1\to 0\;\text{as}\;a\to1^-.\eeas 
 Hence, $S_1:=1+(1-a) F_3(a,\rho)>1$ for $\rho>(K+1)/(5K+1)$. 												
This shows that $(K+1)/(5K+1)$ is the best possible. This completes the proof.
\end{proof}
The following two results are the sharp improved versions of Bohr inequality in the settings of \textrm{Theorem E} by the concept of replacing $|a_0|$ with $|h(z)|^s$ and $|a_1|$ with $|h'(z)|$ in the majorant series, where $s=1,2$.
\begin{theo}\label{TT1} Suppose that $f(z)=h(z)+\ol{g(z)}=\sum_{n=0}^\infty a_n z^n+\ol{\sum_{n=1}^\infty b_n z^n}$ is a sense-preserving $K$-quasiconformal harmonic mapping in $\mathbb{D}$, where $\Vert h(z)\Vert_\infty\leq 1$ in $\mathbb{D}$. Then
\beas |h(z)|+|h'(z)|\rho+\sum_{n=2}^\infty |a_n|\rho^n+\sum_{n=1}^\infty |b_n|\rho^n\leq 1\quad\text{for}\quad \rho\leq \rho_0\leq\sqrt{2}-1 ,\eeas
where $\rho_0\in(0,\sqrt{2}-1)$ is the unique root of the equation
\beas (1-\rho)\left(\rho^2+2\rho-1\right)+2\rho(1+\rho)^2\left(\frac{K-1}{K+1}+\rho\right)=0.\eeas
The number $\rho_0$ is sharp.
\end{theo}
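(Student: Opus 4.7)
The plan is to parallel the proof of Theorem \ref{T1}, but with $|a_0|$ replaced by $|h(z)|$ and $|a_1|$ by $|h'(z)|$ in the majorant. Write $a=|a_0|$ and $k=(K-1)/(K+1)$. Pick's lemma (Lemma \ref{lem1}) yields $|h(z)|\le(a+\rho)/(1+a\rho)$, while Lemma \ref{lem2} with $n=1$ gives $|h'(z)|\le(1-|h(z)|^2)/(1-\rho^2)$. The analytic reason for the restriction $\rho\le\sqrt{2}-1$ is that the map $T\mapsto T+\rho(1-T^2)/(1-\rho^2)$ is non-decreasing on $[0,1]$ precisely when $\rho\le\sqrt{2}-1$. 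Substituting the Schwarz--Pick bound at its maximum and using the identity $1-((a+\rho)/(1+a\rho))^2=(1-a^2)(1-\rho^2)/(1+a\rho)^2$ I obtain
\[ |h(z)|+|h'(z)|\rho\le\frac{a+\rho}{1+a\rho}+\frac{(1-a^2)\rho}{(1+a\rho)^2}. \]

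For the analytic tail I would use $|a_n|\le 1-a^2$ from Lemma \ref{lem2} to get $\sum_{n\ge 2}|a_n|\rho^n\le(1-a^2)\rho^2/(1-\rho)$, and for the co-analytic tail the combination of Lemma \ref{lem3} with the Cauchy--Schwarz step used in Theorem \ref{T1} yields $\sum_{n\ge 1}|b_n|\rho^n\le k(1-a^2)\rho/(1-\rho)$. Summing the four pieces, subtracting $1$ and pulling out a factor of $(1-a)$ reduces the theorem to the one-variable inequality
\[ H(a,\rho):=-\frac{1-\rho}{1+a\rho}+\frac{(1+a)\rho}{(1+a\rho)^2}+\frac{(1+a)\rho(\rho+k)}{1-\rho}\le 0 \]
for $a\in[0,1]$ and $\rho\le\rho_0$.

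The heart of the argument is then to show that $H(\cdot,\rho)$ is increasing in $a$, so that it suffices to treat $a=1$. Differentiating and combining the first two terms over the common denominator $(1+a\rho)^3$ gives
\[ \frac{\partial H}{\partial a}=\frac{\rho(2-3\rho-a\rho^2)}{(1+a\rho)^3}+\frac{\rho(\rho+k)}{1-\rho}, \]
and the elementary estimate $2-3\rho-a\rho^2\ge 2-3(\sqrt{2}-1)-(\sqrt{2}-1)^2=2-\sqrt{2}>0$ shows $\partial H/\partial a>0$ throughout the relevant range. Clearing denominators $(1+\rho)^2(1-\rho)$ in the equation $H(1,\rho)=0$ produces precisely the polynomial equation stated in the theorem; the boundary evaluations $H(1,0)=-1<0$ and, at $\rho=\sqrt{2}-1$, the cancellation $-(1-\rho)/(1+\rho)+2\rho/(1+\rho)^2=-(\sqrt{2}-1)+(\sqrt{2}-1)=0$ leaving only the strictly positive term $2\rho(\rho+k)/(1-\rho)$, locate the unique root $\rho_0$ in $(0,\sqrt{2}-1)$ and yield $H(1,\rho)\le 0$ on $[0,\rho_0]$.

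For sharpness I would take the extremal mapping $h_1(z)=(a-z)/(1-az)$ with $g_1(z)=\lambda k\sum_{n\ge 1}A_nz^n$, $|\lambda|=1$, evaluated at $z=-\rho$. A direct computation shows that the corresponding Bohr-type sum equals $1+(1-a)\widetilde{H}(a,\rho)$, where $\widetilde{H}$ differs from $H$ only by replacing the factor $(\rho+k)/(1-\rho)$ with $(a\rho+k)/(1-a\rho)$; in particular $\widetilde{H}(1,\rho)=H(1,\rho)$. For every $\rho>\rho_0$ one has $H(1,\rho)>0$, so for $a$ sufficiently close to $1^-$ the sum exceeds $1$, proving that $\rho_0$ cannot be enlarged. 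The main obstacle I expect is the monotonicity of $H$ in $a$ and the localization of the root in $(0,\sqrt{2}-1)$; everything else is routine bookkeeping of majorants.
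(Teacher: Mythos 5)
Your proposal is correct and follows essentially the same route as the paper's proof: the same Pick/Schwarz--Pick and Lemma 2.3 majorants, the same observation that $T\mapsto T+\rho(1-T^2)/(1-\rho^2)$ is increasing for $\rho\le\sqrt{2}-1$, the same factor-out-$(1-a)$ reduction (your $H(a,\rho)$ is the paper's $F_4(a,\rho)$ before clearing the denominator $(1+a\rho)^2(1-\rho)$), the same monotonicity-in-$a$ step (yours via the one-derivative bound $2-3\rho-a\rho^2\ge 2-\sqrt{2}>0$, the paper's via a second-derivative argument), and the same extremal pair $h(z)=(a-z)/(1-az)$, $g=\lambda k\sum A_nz^n$ evaluated at $z=-\rho$. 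The only item you leave implicit is the monotonicity of $H(1,\cdot)$ in $\rho$ (equivalently $F_5'(\rho)=3+2k+2\rho+8k\rho+9\rho^2+6k\rho^2+8\rho^3>0$), which is what upgrades your endpoint sign evaluations to uniqueness of $\rho_0$ and to $H(1,\rho)\le 0$ on all of $[0,\rho_0]$; this is indeed the routine check you anticipated.
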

\begin{proof}
Let $F(x)=x+\alpha(1-x^2)$, where $0\leq x\leq x_0(\leq1)$ and $\alpha\geq 0$. Then, $F'(x)=1-2\alpha x$ and $F''(x)=-2\alpha\leq 0$. Therefore, $F'(x)$ is a monotonically 
decreasing function of $x$ and it follows that $F'(x)\geq F'(1)=1-2\alpha\geq 0$ for $\alpha\leq 1/2$. Hence, we have $F(x)\leq F(x_0)$ for $0\leq \alpha\leq 1/2$.  
By using similar arguments as in the proof of \textrm{Theorem \ref{T1}}, and in view of \textrm{Lemmas \ref{lem1}, \ref{lem2}}, and \ref{lem3} together with the condition $|g'(z)|\leq k|h'(z)|$, we have
\beas&& \sum_{n=1}^\infty |b_n|^2\rho^n\leq k^2 \sum_{n=1}^\infty |a_n|^2\rho^n\leq k^2(1-a^2)^2\frac{\rho}{1-\rho},\\[2mm]
&&\sum_{n=1}^\infty |b_n|\rho^n\leq \left(\sum_{n=1}^\infty |b_n|^2\rho^n\right)^{1/2}\left(\sum_{n=1}^\infty\rho^n\right)^{1/2} \leq k(1-a^2)\frac{\rho}{1-\rho},\\[2mm]
&&|h(z)|\leq \frac{|h(0)|+|z|}{1+|h(0)||z|}\quad\text{and}\quad \left|\frac{h^{(n)}(z)}{n!}\right|\leq \frac{1-|h(z)|^2}{(1+|z|)(1-|z|)^n}\quad\text{for}\quad n\geq 1,\eeas
where $|a_0|=a\in[0,1)$.
It is evident that $\rho/(1-\rho^2)\leq 1/2$ for any $\rho\in[0,\sqrt{2}-1]$.
Therefore,
\beas &&|h(z)|+|h'(z)|\rho+\sum_{n=2}^\infty |a_n|\rho^n+\sum_{n=1}^\infty |b_n|\rho^n\\[2mm]
&&\leq \frac{a+\rho}{1+a\rho}+\frac{\rho}{1-\rho^2}\left(1-\left(\frac{a+\rho}{1+a\rho}\right)^2\right)+(1-a^2)\frac{\rho^2}{1-\rho}+k(1-a^2)\frac{\rho}{1-\rho}\\[2mm]
&&= \frac{a+\rho}{1+a\rho}+\frac{\rho(1-a^2)}{(1+a\rho)^2}+(1-a^2)\frac{\rho}{1-\rho}(k+\rho)\\[2mm]
&&=1+\frac{(1-a) F_4(a,\rho)}{(1+a\rho)^2(1-\rho)},\eeas
where $F_4(a,\rho)=(1+a)\rho(1-\rho)+(1+a)\rho(k+\rho)(1+a\rho)^2-(1-\rho)^2(1+a\rho)$ and the first inequality hold for any $\rho\in[0,\sqrt{2}-1]$. Differentiating partially $F_4(a,\rho)$ twice with respect to $a$, we have 
\beas\frac{\pa}{\pa a}F_4(a,\rho)&=&\rho(1-\rho)+\rho(k+\rho)(1+a\rho)^2+2(1+a)\rho^2(k+\rho)(1+a\rho)-\rho(1-\rho)^2\\[2mm]
\frac{\pa^2}{\pa a^2}F_4(a,\rho)&=&2\rho^2(k+\rho)(1+a\rho)+2\rho^2(k+\rho)(1+a\rho)+2(1+a)\rho^3(k+\rho)\geq 0.\eeas
Therefore, $\frac{\pa}{\pa a}F_4(a,\rho)$ is a monotonically increasing function of $a\in[0,1)$ and hence, we have 
\beas \frac{\pa}{\pa a}F_4(a,\rho)\geq \frac{\pa}{\pa a}F_5(0,\rho)
=\rho^3+2(k+1)\rho^2+k\rho\geq 0.\eeas 
Therefore, $F_4(a,\rho)$ is a monotonically increasing function of $a\in[0,1)$ and it follows that 
\beas F_4(a,\rho)\leq F_4(1,\rho)
=(1-\rho)\left(2\rho+\rho^2-1\right)+2\rho(k+\rho)(1+\rho)^2\leq 0\quad\text{for}\quad \rho\leq \rho_0,\eeas 
where $\rho_0$ is the smallest root of the equation 
\bea\label{p4} F_5(\rho):=(1-\rho)\left(2\rho+\rho^2-1\right)+2\rho(k+\rho)(1+\rho)^2=0,\eea
where $k=(K-1)/(K+1)$. Therefore, $F_5(\sqrt{2}-1)=4 (\sqrt{2}-1) (\sqrt{2}-1+ k)>0$, $F_5(0)=-1<0$ and
\beas F_5'(\rho)=3+2k+2\rho+8k\rho+9\rho^2+6k\rho^2+8\rho^3\geq 0\quad\text{for}\quad \rho\in(0,\sqrt{2}-1),\eeas
which shows that $F_5(\rho)$ is a monotonically increasing function of $\rho$. Therefore, $\rho_0\in(0,\sqrt{2}-1)$ is the unique positive root of the equation (\ref{p4}).
It is evident that $2\rho-(1-\rho^2)>0$ for $\rho>\sqrt{2}-1$ and thus, $F_5(\rho)>0$ for $\rho>\sqrt{2}-1$. Therefore, we must have $\rho_0\leq\sqrt{2}-1$.\\[2mm]
\indent To prove the sharpness of the result, we consider the function $f_2(z)=h_2(z)+\ol{g_2(z)}$ in $\mathbb{D}$ such that 
\beas h_2(z)=\frac{a-z}{1-az}=A_0+\sum_{n=1}^\infty A_n z^n,\eeas
where $A_0=a$, $A_n=-(1-a^2)a^{n-1}$ for $n\geq 1$, $a\in[0,1)$ and $g_2(z)=\lambda k \sum_{n=1}^\infty A_n z^n$, where $|\lambda|=1$ and $k=(K-1)/(K+1)$.
Thus,
\beas&&|h_2(-\rho)|+|h_2'(-\rho)|\rho+\sum_{n=2}^\infty |A_n|\rho^n+\sum_{n=1}^\infty |k\lambda A_n|\rho^n\\[2mm]
&&=\frac{a+\rho}{1+a\rho}+\frac{(1-a^2)\rho}{(1+a\rho)^2}+(1-a^2)\rho\sum_{n=2}^\infty (a\rho)^{n-1}+(1-a^2)k\rho\sum_{n=1}^\infty (a\rho)^{n-1}\\[2mm]
&&=\frac{a+\rho}{1+a\rho}+\frac{(1-a^2)\rho}{(1+a\rho)^2}+\frac{(1-a^2)a\rho^2}{1-a\rho}+\frac{(1-a^2)k\rho}{1-a\rho}\\[2mm]
&&=1+\frac{(1-a)}{(1+a\rho)^2(1-a\rho)}F_6(a,\rho),\eeas
where 
\beas F_6(a,\rho)=(1+a)\rho(1-a\rho)+(1+a)(k+\rho)\rho(1+a\rho)^2-(1-\rho)(1-a\rho)(1+a\rho).\eeas
It is evident that
\beas \lim_{a\to1^-} F_6(a,\rho)
=(1-\rho)\left(2\rho-(1-\rho^2)\right)+2(k+\rho)\rho(1+\rho)^2>0\quad\text{for}\quad \rho>\rho_0, \eeas
where $\rho_0$ is the unique positive root of the equation (\ref{p4}) in $(0,\sqrt{2}-1)$. This shows that the radius $\rho_0$ is the best possible. This completes the proof.
\end{proof}
\begin{theo}\label{T7}  Suppose that $f(z)=h(z)+\ol{g(z)}=\sum_{n=0}^\infty a_n z^n+\ol{\sum_{n=1}^\infty b_n z^n}$ is a sense-preserving $K$-quasiconformal harmonic mapping in $\mathbb{D}$, where $\Vert h(z)\Vert_\infty\leq 1$ in $\mathbb{D}$. Then
\beas |h(z)|^2+\left|h'(z)\right|\rho+\sum_{n=2}^\infty \left|a_n\right|\rho^n+\sum_{n=1}^\infty |b_n|\rho^n\leq 1\quad\text{for}\quad \rho\leq \rho_0\leq \left(\sqrt{5}-1\right)/2,\eeas
where $\rho_0$ is the unique positive root of the equation 
\beas (1-\rho)\left(1-\rho^2-\rho\right)-\rho(1+\rho)^2\left(\rho+\frac{K-1}{K+1}\right)=0.\eeas 
The number $\rho_0$ is sharp.
\end{theo}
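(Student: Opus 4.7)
The plan is to mirror the argument used to establish \textrm{Theorem \ref{TT1}}, adapting the step in which $|h(z)|$ is now squared. Normalize so that $\Vert h\Vert_\infty\leq 1$, set $|a_0|=a\in[0,1)$, and write $k=(K-1)/(K+1)$. Applying \textrm{Lemma \ref{lem1}} gives the Schwarz--Pick bound $|h(z)|\leq (a+\rho)/(1+a\rho)$; \textrm{Lemma \ref{lem2}} with $n=1$ gives $|h'(z)|\leq (1-|h(z)|^2)/(1-\rho^2)$ and also the coefficient estimate $|a_n|\leq 1-a^2$, yielding $\sum_{n\geq 2}|a_n|\rho^n\leq (1-a^2)\rho^2/(1-\rho)$. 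The co-analytic part is handled exactly as in \textrm{Theorem \ref{T1}}: combining \textrm{Lemma \ref{lem3}} with the Cauchy--Schwarz inequality produces $\sum_{n\geq 1}|b_n|\rho^n\leq k(1-a^2)\rho/(1-\rho)$.

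The key new observation, which is precisely what forces $\rho_0\leq (\sqrt{5}-1)/2$, concerns the combination $|h(z)|^2+|h'(z)|\rho$. Setting $x=|h(z)|$ and $\alpha=\rho/(1-\rho^2)$, this combination is bounded above by $F(x):=x^2+\alpha(1-x^2)=\alpha+(1-\alpha)x^2$, which is monotonically increasing on $[0,1]$ if and only if $\alpha\leq 1$, equivalently $\rho\leq (\sqrt{5}-1)/2$. Only under this restriction is it valid to substitute the Schwarz--Pick bound $(a+\rho)/(1+a\rho)$ for $x$; using the identity $(1+a\rho)^2-(a+\rho)^2=(1-a^2)(1-\rho^2)$ one then obtains
\[
|h(z)|^2+|h'(z)|\rho\;\leq\;\frac{(a+\rho)^2+\rho(1-a^2)}{(1+a\rho)^2}.
\]

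Assembling the four estimates, the desired inequality reduces to showing
\[
(1-\rho-\rho^2)(1-\rho)-\rho(\rho+k)(1+a\rho)^2\;\geq\;0
\]
for all $a\in[0,1)$ and $\rho\leq \rho_0$. The left-hand side is strictly decreasing in $a$, so it suffices to verify it at $a=1$, which is exactly the equation defining $\rho_0$. Expanding produces
\[
N(\rho):=(1-\rho)(1-\rho^2-\rho)-\rho(1+\rho)^2(\rho+k)=1-(2+k)\rho-(1+2k)\rho^2-(1+k)\rho^3-\rho^4,
\]
so $N'(\rho)<0$ throughout $[0,\infty)$, while $N(0)=1>0$ and $N((\sqrt{5}-1)/2)<0$ (since $1-\rho-\rho^2$ vanishes there). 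Consequently $\rho_0$ is the unique positive root and satisfies $\rho_0<(\sqrt{5}-1)/2$.

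For sharpness I would reuse the extremal mapping from \textrm{Theorem \ref{TT1}}, namely $h_2(z)=(a-z)/(1-az)$ together with $g_2(z)=\lambda k\sum_{n\geq 1}A_n z^n$ where $|\lambda|=1$, evaluated at $z=-\rho$. A direct computation analogous to the one in \textrm{Theorem \ref{TT1}} reexpresses the left-hand side as $1+\frac{1-a}{(1+a\rho)^2(1-a\rho)}\widetilde{F}(a,\rho)$ for an explicit polynomial $\widetilde{F}$ whose limit as $a\to 1^-$ is a positive multiple of $-N(\rho)$. For $\rho>\rho_0$ we have $-N(\rho)>0$, so the bound is violated for $a$ sufficiently close to $1$, proving sharpness. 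The main obstacle is the monotonicity step for $F(x)$: without it, the Schwarz--Pick substitution cannot be justified, and it is precisely this step that distinguishes the present argument from that of \textrm{Theorem \ref{TT1}} and introduces the cap $(\sqrt{5}-1)/2$ on the radius.
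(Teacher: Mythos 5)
Your proposal is correct and follows essentially the same route as the paper's own proof: the same Schwarz--Pick and coefficient bounds, the observation that $x^2+\tfrac{\rho}{1-\rho^2}(1-x^2)$ is increasing in $x$ exactly when $1-\rho-\rho^2\geq 0$ (which is the paper's coefficient $1-\rho/(1-\rho^2)\geq 0$), the reduction to $(1-\rho)(1-\rho^2-\rho)-\rho(\rho+k)(1+a\rho)^2\geq 0$, monotonicity in $a$, and the extremal function $h(z)=(a-z)/(1-az)$ evaluated at $z=-\rho$ with $a\to 1^-$. Your expansion of $N(\rho)$ and the sign analysis match the paper's $F_8$ computation, so no gaps.
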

\begin{proof}
By using similar arguments as in the proof of \textrm{Theorem \ref{T1}}, and in view of \textrm{Lemmas \ref{lem1}, \ref{lem2}} and \ref{lem3} together with the condition $|g'(z)|\leq k|h'(z)|$, we have
\beas
&&\sum_{n=1}^\infty |b_n|\rho^n\leq \left(\sum_{n=1}^\infty |b_n|^2\rho^n\right)^{1/2}\left(\sum_{n=1}^\infty\rho^n\right)^{1/2} \leq k(1-a^2)\frac{\rho}{1-\rho},\\[2mm]
&&|h(z)|\leq \frac{|h(0)|+|z|}{1+|h(0)||z|}\quad\text{and}\quad \left|\frac{h^{n}(z)}{n!}\right|\leq \frac{1-|h(z)|^2}{(1-|z|)^n(1+|z|)}\;\;\text{for}\;\;n\geq 1,\eeas
where $|a_0|=a\in[0,1)$. 
It is evident that $1-\rho^2-\rho\geq 0$ for $0\leq \rho\leq \left(\sqrt{5}-1\right)/2$.
Thus, we have
\beas |h(z)|^2+|h'(z)|\rho+\sum_{n=2}^\infty \left|a_n\right|\rho^n+\sum_{n=1}^\infty |b_n|\rho^n&\leq& |h(z)|^2+\frac{1-|h(z)|^2}{1-\rho^2}\rho+(1-a^2)\sum_{n=2}^\infty \rho^n\\[2mm]
&&+k(1-a^2)\frac{\rho}{1-\rho}\\[2mm]
&=&\left(1-\frac{\rho}{1-\rho^2}\right) |h(z)|^2+\frac{\rho}{(1-\rho^2)}\\[2mm]
&&+(1-a^2)\frac{\rho^2}{1-\rho}+k(1-a^2)\frac{\rho}{1-\rho}\\[2mm]
&\leq& \frac{1-\rho^2-\rho}{(1-\rho^2)}\left(\frac{a+\rho}{1+a\rho}\right)^2+\frac{\rho}{1-\rho^2}\\[2mm]
&&+(k+1)(1-a^2)\frac{\rho}{1-\rho}-(1-a^2)\rho\\[2mm]
&=&1-\frac{(1-a^2)}{(1-\rho)(1+a\rho)^2}F_7(a,\rho),\eeas
where 
\beas F_7(a,\rho)=(1-\rho^2-\rho)(1-\rho)+(1-\rho)(1+a\rho)^2\rho-(k+1)\rho(1+a\rho)^2\eeas and the second inequality hold for any $\rho\in[0, \left(\sqrt{5}-1\right)/2]$. Differentiating partially $F_7(a,\rho)$ twice with respect to $a$, we have 
\beas\frac{\pa}{\pa a}F_7(a,\rho)=-2(1+a\rho)\rho^2\left(k+\rho\right)\leq 0.\eeas
Therefore, $F_7(a,\rho)$ is a monotonically decreasing function of $a\in[0,1)$ and it follows that 
\beas F_7(a,\rho)\geq F_7(1,\rho)
&=&(1-\rho)(1-\rho^2-\rho)-\rho(1+\rho)^2\left(\rho+k\right)\geq 0\eeas
for $\rho\leq \rho_0$, where $k=(K-1)/(K+1)$ and $\rho_0$ is the smallest positive root of the equation 
\bea\label{p5}F_8(\rho):=(1-\rho)(1-\rho^2-\rho)-\rho(1+\rho)^2\left(\rho+k\right)=0.\eea
Clearly, $1-\rho^2-\rho<0$ for $\rho>\left(\sqrt{5}-1\right)/2$ and thus, we have $F_8(\rho)<0$ for $\rho>\left(\sqrt{5}-1\right)/2$. Hence, we must have $\rho_0\leq \left(\sqrt{5}-1\right)/2$. Also, $F_8(0)=1>0$, $F_8\left(\left(\sqrt{5}-1\right)/2\right)=-(\sqrt{5}-1) (\sqrt{5}+1)^2 (\sqrt{5}-1+ 2 k)/16<0$ and
\beas F_8'(\rho)=-2-2\rho- 3\rho^2-4\rho^3-k(1+4\rho+3\rho^2)\leq 0\quad\text{for}\quad \rho\in\left(0, \left(\sqrt{5}-1\right)/2\right).\eeas 
It's follows that $\rho_0$ is the unique positive root of the equation (\ref{p5}) in $\left(0, \left(\sqrt{5}-1\right)/2\right)$.\\[2mm]
\indent To prove the sharpness of the result, we consider the function $f_3(z)=h_3(z)+\ol{g_3(z)}$ in $\mathbb{D}$ such that 
\beas h_3(z)=\frac{a-z}{1-az}=A_0+\sum_{n=1}^\infty A_n z^n\quad\text{so that}\quad
\frac{h_3^{n}(z)}{n!}=-\frac{a^{n-1}(1-a^2)}{(1-az)^{n+1}}\quad\text{for}\quad n\geq 1,\eeas
where $A_0=a$, $A_n=-(1-a^2)a^{n-1}$ for $n\geq 1$, $a\in[0,1)$ and $g_3(z)=\lambda k \sum_{n=1}^\infty A_n z^n$, where $|\lambda|=1$ and $k=(K-1)/(K+1)$.
Therefore,
\beas&&|h_3(-\rho)|^2+\left|h_3'(-\rho)\right|\rho+\sum_{n=2}^\infty \left|A_n\right|\rho^n+\sum_{n=1}^\infty |k\lambda A_n|\rho^n\\[2mm]
&&=\left(\frac{a+\rho}{1+a\rho}\right)^2+\frac{(1-a^2)\rho}{(1+a\rho)^2}+(1-a^2)\rho\sum_{n=2}^\infty (a\rho)^{n-1}+(1-a^2)k\rho\sum_{n=1}^\infty (a\rho)^{n-1}\\[2mm]
&&=1+\frac{(1-a^2)}{(1+a\rho)^2(1-a\rho)}F_9(a,\rho),\eeas
where 
\beas F_9(a,\rho)=-(1-\rho^2)(1-a\rho)+\rho(1-a\rho)+(k+1)\rho(1+a\rho)^2-\rho (1-a\rho)(1+a\rho)^2.\eeas
It is evident that
\beas \lim_{a\to1^-} F_9(a,\rho)
=-(1-\rho)\left(1-\rho^2-\rho\right)+\rho(1+\rho)^2\left(k+\rho\right)>0\quad\text{for}\quad\rho>\rho_0, \eeas
where $\rho_0$ is the unique positive root of the equation (\ref{p5}) in $\left(0, \left(\sqrt{5}-1\right)/2\right)$. 
This shows that the radius $\rho_0$ is the best possible. This completes the proof.
\end{proof}
Letting  $K\to\infty$ in \textrm{Theorems \ref{TT1}} and \ref{T7}, we obtain the following sharp harmonic analogues of the classical Bohr inequality, respectively.
\begin{cor}  Suppose that $f(z)=h(z)+\ol{g(z)}=\sum_{n=0}^\infty a_n z^n+\ol{\sum_{n=1}^\infty b_n z^n}$ is a sense-preserving harmonic mapping in $\mathbb{D}$, where $\Vert h(z)\Vert_\infty\leq 1$ in $\mathbb{D}$. Then
\beas |h(z)|+\left|h'(z)\right|\rho+\sum_{n=2}^\infty \left|a_n\right|\rho^n+\sum_{n=1}^\infty |b_n|\rho^n\leq 1\quad\text{for}\quad \rho\leq R_1\leq \sqrt{2}-1,\eeas
where $R_1(\approx 0.1671)$ is the unique positive root of the equation 
\beas 2\rho^4+ 5\rho^3+ 5\rho^2+5\rho-1=0.\eeas 
The number $R_1$ is the best possible.
\end{cor}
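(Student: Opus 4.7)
The plan is to deduce the corollary by passing to the limit $K\to\infty$ (equivalently $k=(K-1)/(K+1)\to 1^-$) in \textrm{Theorem \ref{TT1}}. A sense-preserving harmonic mapping $f=h+\overline{g}$ satisfies $|g'(z)|\le|h'(z)|$ but need not be $K$-quasiconformal for any finite $K$, so to justify the limit rigorously I would apply \textrm{Theorem \ref{TT1}} to the approximating family $f_r(z)=h(z)+\overline{rg(z)}$ with $r\in(0,1)$; each $f_r$ is $K_r$-quasiconformal with $K_r\to\infty$ as $r\to 1^-$, and the desired inequality for $f$ follows from monotone convergence applied termwise to the majorant series together with the continuity of $h'$.

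Next I would verify that substituting $k=1$ into the defining equation in \textrm{Theorem \ref{TT1}},
\[
(1-\rho)(\rho^2+2\rho-1)+2\rho(1+\rho)^2\Bigl(\tfrac{K-1}{K+1}+\rho\Bigr)=0,
\]
produces the quartic stated in the corollary. Direct expansion of $(1-\rho)(\rho^2+2\rho-1)=-\rho^3-\rho^2+3\rho-1$ and $2\rho(1+\rho)^3=2\rho^4+6\rho^3+6\rho^2+2\rho$ gives the sum $2\rho^4+5\rho^3+5\rho^2+5\rho-1$, which is the polynomial displayed in the statement.

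I would then establish existence and uniqueness of $R_1$ in $(0,\sqrt{2}-1)$. Setting $P(\rho):=2\rho^4+5\rho^3+5\rho^2+5\rho-1$, one has $P(0)=-1<0$, while $P(\sqrt{2}-1)>0$ follows by the same sign computation used at the end of \textrm{Theorem \ref{TT1}} (where $F_5(\sqrt{2}-1)>0$) specialized to $k=1$. Since $P'(\rho)=8\rho^3+15\rho^2+10\rho+5>0$ on $[0,\infty)$, strict monotonicity yields a unique positive root $R_1\in(0,\sqrt{2}-1)$, and the numerical value $R_1\approx 0.1671$ is confirmed by a short bisection.

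The main obstacle is sharpness, which I would handle by passing to the limit in the extremal family used for \textrm{Theorem \ref{TT1}}: take $h_2(z)=(a-z)/(1-az)$ and $g_2(z)=\lambda k\sum_{n\ge 1}A_n z^n$ with $|\lambda|=1$, and let $k\to 1^-$. The function $F_6(a,\rho)$ from that proof then tends, as $a\to 1^-$ and $k\to 1$, to $(1-\rho)(2\rho-(1-\rho^2))+2(1+\rho)\rho(1+\rho)^2$, which is proportional to $P(\rho)$ and hence is strictly positive for $\rho>R_1$. This shows the majorant strictly exceeds $1$ just past $R_1$, proving that $R_1$ cannot be replaced by any larger constant. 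The subtlety here is that the extremal $f_2$ itself is only a limit (not attained) in the class of sense-preserving harmonic mappings, but an $\epsilon/3$-argument using the approximating family $f_r$ makes the sharpness claim precise.
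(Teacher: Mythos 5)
Your proposal follows the paper's own derivation exactly: the corollary is obtained by letting $K\to\infty$ (i.e., $k=(K-1)/(K+1)\to 1^-$) in Theorem \ref{TT1}, and your reduction of the defining equation to $2\rho^4+5\rho^3+5\rho^2+5\rho-1=0$, the monotonicity check $P'(\rho)>0$, and the sharpness argument via the extremal family $h_2(z)=(a-z)/(1-az)$, $g_2=\lambda k(h_2-a)$ with $a\to1^-$, $k\to1^-$ are precisely what the paper's limiting statement encodes. Your additional step of approximating a general sense-preserving map by the $K_r$-quasiconformal family $f_r=h+\overline{rg}$ supplies a justification the paper omits (it simply writes ``letting $K\to\infty$''), but the route is the same.
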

\begin{cor}  Suppose that $f(z)=h(z)+\ol{g(z)}=\sum_{n=0}^\infty a_n z^n+\ol{\sum_{n=1}^\infty b_n z^n}$ is a sense-preserving harmonic mapping in $\mathbb{D}$, where $\Vert h(z)\Vert_\infty\leq 1$ in $\mathbb{D}$. Then
\beas |h(z)|^2+\left|h'(z)\right|\rho+\sum_{n=2}^\infty \left|a_n\right|\rho^n+\sum_{n=1}^\infty |b_n|\rho^n\leq 1\quad\text{for}\quad \rho\leq R_1\leq (\sqrt{5}-1)/2,\eeas
where $R_1(\approx 0.255508)$ is the unique positive root of the equation 
\beas \rho^4+2\rho^3+3\rho^2+3\rho-1=0.\eeas 
The number $R_1$ is the best possible.
\end{cor}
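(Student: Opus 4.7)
The plan is to derive this corollary as the $K\to\infty$ limit of Theorem \ref{T7}. Since $k=(K-1)/(K+1)\to 1$ as $K\to\infty$, substituting $k=1$ into the defining equation $(1-\rho)(1-\rho^2-\rho)-\rho(1+\rho)^2(\rho+k)=0$ for the sharp radius in Theorem \ref{T7} yields $(1-\rho)(1-\rho^2-\rho)-\rho(1+\rho)^3=0$. Expanding the two products as $1-2\rho+\rho^3$ and $\rho+3\rho^2+3\rho^3+\rho^4$ and collecting terms recovers the quartic $\rho^4+2\rho^3+3\rho^2+3\rho-1=0$ stated in the corollary.

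Next I would verify that $P(\rho):=\rho^4+2\rho^3+3\rho^2+3\rho-1$ has a unique positive root $R_1$ lying in $(0,(\sqrt{5}-1)/2)$. One checks $P(0)=-1<0$, and with $\phi=(\sqrt{5}-1)/2$ (so $\phi^2+\phi=1$, $\phi^3=2\phi-1$, $\phi^4=2-3\phi$) a direct computation gives $P(\phi)=2+\phi>0$; since $P'(\rho)=4\rho^3+6\rho^2+6\rho+3>0$ on $[0,\infty)$, the polynomial $P$ is strictly increasing, so it admits exactly one positive root, numerically $R_1\approx 0.255508$.

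The main obstacle is that Theorem \ref{T7} is formulated only for $K$-quasiconformal mappings with $k<1$, whereas the corollary merely assumes $f$ is sense-preserving, which allows $\sup_{\mathbb{D}}|\omega_f|=1$. To bridge this gap I would employ a standard dilation argument: for $\varepsilon\in(0,1)$, set $h_\varepsilon(z):=h((1-\varepsilon)z)$ and $g_\varepsilon(z):=g((1-\varepsilon)z)$, so that $f_\varepsilon=h_\varepsilon+\overline{g_\varepsilon}$ has dilatation $\omega_{f_\varepsilon}(z)=\omega_f((1-\varepsilon)z)$, which is uniformly bounded on $\mathbb{D}$ by $k_\varepsilon:=\sup_{|w|\leq 1-\varepsilon}|\omega_f(w)|<1$. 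Thus $f_\varepsilon$ is $K_\varepsilon$-quasiconformal with $K_\varepsilon=(1+k_\varepsilon)/(1-k_\varepsilon)$, and $\|h_\varepsilon\|_\infty\leq\|h\|_\infty\leq 1$. Because $\partial_k[(1-\rho)(1-\rho^2-\rho)-\rho(1+\rho)^2(\rho+k)]=-\rho(1+\rho)^2<0$, the radius $\rho_0(k)$ defined implicitly by Theorem \ref{T7} is strictly decreasing in $k$, so $\rho_0(k_\varepsilon)>R_1$. Applying Theorem \ref{T7} to $f_\varepsilon$ and then letting $\varepsilon\to 0^+$, while using that the Taylor coefficients of $f_\varepsilon$ and the values $h_\varepsilon(z)$, $h_\varepsilon'(z)$ converge locally uniformly to those of $f$, transfers the inequality on $|z|\leq R_1$ to the original $f$.

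For sharpness I would revisit the extremal function $f_3$ from the proof of Theorem \ref{T7}, now allowing the dilatation parameter $k\to 1^-$. A direct simplification, using $1-a\rho\to 1-\rho$ and $1+a\rho\to 1+\rho$, shows that $\lim_{a\to 1^-}F_9(a,\rho)$ with $k=1$ reduces to exactly $P(\rho)=\rho^4+2\rho^3+3\rho^2+3\rho-1$, which is strictly positive for $\rho>R_1$. Consequently the left-hand side of the corollary's inequality exceeds $1$ in the limit $a\to 1^-$ whenever $\rho>R_1$, confirming that $R_1$ is best possible.
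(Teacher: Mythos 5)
Your proposal is correct and follows essentially the same route as the paper, which simply states the corollary as the $K\to\infty$ limit of Theorem \ref{T7} without further argument. Your write-up is in fact more careful than the paper's: the dilation argument passing from $K$-quasiconformal to merely sense-preserving mappings, and the verification that $\rho_0(k)\to R_1$ monotonically as $k\to 1^-$, are exactly the details the paper leaves implicit, and your algebra reducing the defining equation to $\rho^4+2\rho^3+3\rho^2+3\rho-1=0$ checks out.
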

The following result is the sharp Bohr-Rogosinski inequality in the settings of \textrm{Theorem G}. 
\begin{theo}\label{T2} Suppose that $f(z)=h(z)+\ol{g(z)}=\sum_{n=0}^\infty a_n z^n+\ol{\sum_{n=1}^\infty b_n z^n}$ is a sense-preserving $K$-quasiconformal harmonic mapping in $\mathbb{D}$, where $h(z)$ satisfies the conditions $\text{Re}(h(z))<1$ in $\mathbb{D}$ and $h(0)=a_0>0$. Then, for any $p\in\N$, we have 
\beas a_0+|h(z)-a_0|^p+\sum_{n=1}^\infty |a_n|\rho^n+\sum_{n=1}^\infty |b_n|\rho^n\leq 1\quad\text{for}\quad \rho\leq \rho_0,\eeas
where $\rho_0\in(0,1)$ is the unique root of the equation
\beas \left(\frac{2\rho}{1-\rho}\right)^p+\frac{4K\rho}{(K+1)(1-\rho)}-1=0.\eeas
The number $\rho_0$ is sharp.\end{theo}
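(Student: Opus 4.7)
The plan is to reduce Theorem \ref{T2} to a few standard coefficient estimates coming from the half-plane hypothesis, and then to a single scalar inequality in $\rho$. Since $\text{Re}\,h(z) < 1$ in $\mathbb{D}$ with $h(0) = a_0 > 0$, the function $p(z) := 1 - h(z)$ satisfies $\text{Re}\,p(z) > 0$ with $p(0) = 1 - a_0 > 0$, so Lemma \ref{lem6} yields $|a_n| \leq 2(1 - a_0)$ for every $n \geq 1$. This single estimate controls the first two tail contributions, and the dilatation condition $|g'(z)| \leq k|h'(z)|$ with $k = (K-1)/(K+1)$ will control the third.

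Write $a := a_0$ and $\rho = |z|$. The first step is to bound each piece. From the Carath\'eodory bound,
\[
|h(z) - a|^{p} \leq \Bigl(\sum_{n \geq 1}|a_n|\rho^n\Bigr)^{p} \leq \Bigl(\tfrac{2(1-a)\rho}{1-\rho}\Bigr)^{p},
\]
and directly $\sum_{n \geq 1}|a_n|\rho^n \leq 2(1-a)\rho/(1-\rho)$. For the co-analytic part, Lemma \ref{lem3} combined with $|a_n|^2 \leq 4(1-a)^2$ gives $\sum |b_n|^2 \rho^n \leq 4k^2 (1-a)^2 \rho/(1-\rho)$, and Cauchy--Schwarz yields $\sum |b_n|\rho^n \leq 2k(1-a)\rho/(1-\rho)$. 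Summing and using $1+k = 2K/(K+1)$, the left-hand side is bounded by
\[
a + \Bigl(\tfrac{2(1-a)\rho}{1-\rho}\Bigr)^{p} + \frac{4K(1-a)\rho}{(K+1)(1-\rho)}.
\]
Subtracting $a$, dividing by $1 - a > 0$, and noting that $a \in (0,1)$ and $p \in \mathbb{N}$ force $(1-a)^{p-1} \leq 1$, the desired inequality reduces to
\[
\phi(\rho) := \Bigl(\tfrac{2\rho}{1-\rho}\Bigr)^{p} + \frac{4K\rho}{(K+1)(1-\rho)} - 1 \leq 0.
\]
Since $\phi$ is strictly increasing on $[0,1)$ with $\phi(0) = -1$ and $\phi(\rho) \to \infty$ as $\rho \to 1^-$, there is a unique positive root $\rho_0 \in (0,1)$, and $\phi \leq 0$ exactly on $[0,\rho_0]$.

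For sharpness I would use the half-plane extremal $h_\ast(z) := (a - (2-a)z)/(1-z)$, which satisfies $1 - h_\ast(z) = (1-a)(1+z)/(1-z)$, hence $\text{Re}\,h_\ast(z) < 1$ and $|a_n| = 2(1-a)$ for all $n \geq 1$, paired with $g_\ast(z) := k(h_\ast(z) - a)$, so that $|g_\ast'(z)| = k|h_\ast'(z)|$. A direct computation gives $h_\ast(\rho) - a = -2(1-a)\rho/(1-\rho)$, so every tail estimate above becomes an equality at $z = \rho$. Letting $a \to 0^+$ collapses the composite bound to exactly $(2\rho/(1-\rho))^{p} + 4K\rho/((K+1)(1-\rho))$, which exceeds $1$ for any $\rho > \rho_0$; this shows the radius cannot be enlarged. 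The main (but still mild) obstacle is the reduction via $(1-a)^{p-1} \leq 1$: this is what decouples the final scalar inequality from $a$, and is precisely why the sharpness check must be performed in the $a \to 0^+$ limit rather than at a single $a \in (0,1)$.
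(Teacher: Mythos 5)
Your proposal is correct and follows essentially the same route as the paper: the same reduction via $p(z)=1-h(z)$ and Lemma \ref{lem6}, the same Cauchy--Schwarz bound on the co-analytic tail, the same elimination of the $a$-dependence (your inequality $(1-a)^{p-1}\leq 1$ is just the paper's monotonicity-in-$a_0$ step stated directly), and the same extremal function up to the reflection $z\mapsto -z$ with the limit $a\to 0^+$. No gaps.
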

\begin{proof} Let $p(z)=1-h(z)$ for $z\in\D$. Then, $\text{Re}\;p(z)>0$ in $\D$. In view of \textrm{Lemma \ref{lem6}}, we have $|a_n|\leq 2(1-a_0)$ for $n\geq 1$.
Using similar arguments as in the proof of \textrm{Theorem \ref{T1}} and in view of \textrm{Lemma \ref{lem3}} together with the condition $|g'(z)|\leq k|h'(z)|$, we have
\beas&& \sum_{n=1}^\infty |b_n|^2\rho^n\leq k^2 \sum_{n=1}^\infty |a_n|^2\rho^n\leq 4k^2(1-a_0)^2\frac{\rho}{1-\rho},\\[2mm]
&&\sum_{n=1}^\infty |b_n|\rho^n\leq \left(\sum_{n=1}^\infty |b_n|^2\rho^n\right)^{1/2}\left(\sum_{n=1}^\infty\rho^n\right)^{1/2} \leq 2k(1-a_0)\frac{\rho}{1-\rho}
\\[2mm]\text{and}&&|h(z)-a_0|=\left|\sum_{n=1}^\infty a_n z^n\right|\leq \sum_{n=1}^\infty |a_n| \rho^{n}\leq 2(1-a_0)\frac{\rho}{1-\rho}.
\eeas
Therefore,
\beas a_0+|h(z)-a_0|^p+\sum_{n=1}^\infty |a_n|\rho^n+\sum_{n=1}^\infty |b_n|\rho^n&\leq& a_0+2^p (1-a_0)^p\frac{\rho^p}{(1-\rho)^p}\\
&&+2(k+1)(1-a_0)\frac{\rho}{1-\rho}\\
&=&1+(1-a_0)F_{10}(a_0,\rho),\eeas
where 
\beas F_{10}(a_0,\rho)=2^p (1-a_0)^{p-1}\frac{\rho^p}{(1-\rho)^p}+2(k+1)\frac{\rho}{1-\rho}-1\eeas
Differentiating $F_{10}(a_0,\rho)$ partially with respect to $a_0$, we get
\beas \frac{\pa}{\pa a_0} F_{10}(a_0,\rho)=-(p-1)(1-a_0)^{p-2}\left(\frac{2\rho}{1-\rho}\right)^p\leq 0.\eeas 
Therefore, $F_{10}(a_0,\rho)$ is a monotonically decreasing function of $a_0\in[0,1)$ and it follows that 
\beas F_{10}(a_0,\rho)\leq F_{10}(0,\rho)=\left(\frac{2\rho}{1-\rho}\right)^p+(k+1)\frac{2\rho}{1-\rho}-1\leq 0\quad\text{for}\quad \rho\leq \rho_0,\eeas
where $\rho_0\in(0,1)$ is the smallest root of the equation
\bea\label{p1} F_{11}(\rho):=\left(\frac{2\rho}{1-\rho}\right)^p+\frac{4K\rho}{(K+1)(1-\rho)}-1=0,\eea
where $k=(K-1)/(K+1)$. It is easy to see that $F_{11}(0)=-1<0$, $\lim_{\rho\to1^-}F_{11}(\rho)=+\infty$ and
\beas F_{11}'(\rho)=2^p p \left(\frac{\rho}{1-\rho}\right)^{p-1}\frac{1}{(1-\rho)^2}+\frac{4K}{(K+1)(1-\rho)^2}\geq 0\quad\text{for}\quad \rho\in[0,1),\eeas
which shows that $F_{11}(\rho)$ is a monotonically increasing function of $\rho$. Therefore, $\rho_0\in(0,1)$ is the unique root of the equation (\ref{p1}).\\[2mm]
\indent To prove the sharpness of the result, we consider the function $f_4(z)=h_4(z)+\ol{g_4(z)}$ in $\mathbb{D}$ such that 
\beas h_4(z)=a-2(1-a)\frac{z}{1+z}=A_0+\sum_{n=1}^\infty A_n z^n,\eeas
where $A_0=a\in(0,1)$, $A_n=2(1-a)(-1)^n$ for $n\geq 1$, $a\in[0,1)$ and $g_4(z)=k \sum_{n=1}^\infty A_n z^n$, where $k=(K-1)/(K+1)$. Therefore, we have
\beas &&A_0+|h_4(-\rho)-A_0|^p+\sum_{n=1}^\infty |A_n|\rho^n+\sum_{n=1}^\infty |kA_n|\rho^n\\
&&=a+\left(2(1-a)\frac{\rho}{1-\rho}\right)^p+2(1+k)(1-a)\frac{\rho}{1-\rho}\\[2mm]
&&=1+(1-a)F_{12}(a,\rho),\eeas
where 
\beas F_{12}(a,\rho)=(1-a)^{p-1}\left(\frac{2\rho}{1-\rho}\right)^p+2(1+k)\frac{\rho}{1-\rho}-1.\eeas 
It is evident that
\beas \lim_{a\to0^+}F_{12}(a,\rho)=\left(\frac{2\rho}{1-\rho}\right)^p+2(1+k)\frac{\rho}{1-\rho}-1>0\quad\text{for}\quad \rho>\rho_0,\eeas
where $\rho_0$ is the unique root of the equation (\ref{p1}) in $(0,1)$. This shows that $\rho_0$ is best possible. This completes the proof.\end{proof}
\begin{theo} Suppose that $f(z)=h(z)+\ol{g(z)}=\sum_{n=0}^\infty a_n z^n+\ol{\sum_{n=1}^\infty b_n z^n}$ is a sense-preserving $K$-quasiconformal harmonic mapping in $\mathbb{D}$, where $h(z)$ satisfies the conditions $\text{Re}(h(z))<1$ in $\mathbb{D}$ and $h(0)=a_0>0$. Then, for any $p\in\N$, we have 
\beas a_0^2+|h(z)-a_0|^p+\sum_{n=1}^\infty |a_n|\rho^n+\sum_{n=1}^\infty |b_n|\rho^n\leq 1\quad\text{for}\quad \rho\leq \rho_0,\eeas
where $\rho_0\in(0,1)$ is the unique root of the equation
\beas \left(\frac{2\rho}{1-\rho}\right)^p+\frac{4K\rho}{(K+1)(1-\rho)}-1=0.\eeas
The radius $\rho_0$ is sharp.\end{theo}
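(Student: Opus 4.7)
The plan is to follow the architecture of the proof of Theorem \ref{T2} essentially verbatim, making only the algebraic modifications forced by replacing the leading $a_0$ with $a_0^2$. Applying Lemma \ref{lem6} to $1 - h(z)$ (which has positive real part and value $1 - a_0$ at the origin) yields $|a_n| \leq 2(1-a_0)$ for $n \geq 1$. Combined with the dilatation bound $|g'(z)| \leq k|h'(z)|$, Lemma \ref{lem3}, and the Cauchy-Schwarz inequality, this produces the same three working estimates used in Theorem \ref{T2}:
\[
\sum_{n=1}^\infty |b_n|\rho^n \leq \frac{2k(1-a_0)\rho}{1-\rho}, \quad \sum_{n=1}^\infty |a_n|\rho^n \leq \frac{2(1-a_0)\rho}{1-\rho}, \quad |h(z) - a_0| \leq \frac{2(1-a_0)\rho}{1-\rho}.
\]

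Writing $A := 1 - a_0 \in (0,1]$ and collecting terms, the left-hand side of the claimed inequality is majorized by
\[
(1-A)^2 + A^p \left(\frac{2\rho}{1-\rho}\right)^p + 2(1+k) A \frac{\rho}{1-\rho} = 1 + A\, G(A, \rho),
\]
where
\[
G(A,\rho) := A^{p-1}\left(\frac{2\rho}{1-\rho}\right)^p + \frac{2(1+k)\rho}{1-\rho} - (2 - A).
\]
For fixed $\rho \in [0,1)$, the function $A \mapsto G(A,\rho)$ is non-decreasing on $[0,1]$ (for $p \geq 2$ both the $A^{p-1}$-factor and $-(2-A)$ are increasing in $A$; for $p=1$ only $-(2-A)$ depends on $A$, and is still increasing). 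Hence $G(A,\rho) \leq G(1,\rho) = (2\rho/(1-\rho))^p + 2(1+k)\rho/(1-\rho) - 1$, and using $k = (K-1)/(K+1)$ the inequality $G(1,\rho) \leq 0$ reduces exactly to the condition $\rho \leq \rho_0$ in the statement. Elementary monotonicity of $G(1,\cdot)$ on $[0,1)$, together with $G(1,0) = -1$ and $G(1,\rho) \to +\infty$ as $\rho \to 1^-$, ensures that $\rho_0$ is the unique positive root.

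For sharpness I would reuse the extremal pair $h_4(z) = a - 2(1-a)z/(1+z)$ and $g_4(z) = k \sum_{n=1}^\infty A_n z^n$ from Theorem \ref{T2}. Evaluating at $z = -\rho$, the left-hand side of the proposed inequality becomes
\[
a^2 + \left(\frac{2(1-a)\rho}{1-\rho}\right)^p + \frac{2(1+k)(1-a)\rho}{1-\rho},
\]
and sending $a \to 0^+$ collapses this to $(2\rho/(1-\rho))^p + 2(1+k)\rho/(1-\rho)$, which strictly exceeds $1$ for every $\rho > \rho_0$ by the same analysis as in Theorem \ref{T2}.

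The main obstacle is almost purely cosmetic: all analytic inputs (Lemmas \ref{lem3}, \ref{lem6}, and Cauchy-Schwarz) are identical to those in Theorem \ref{T2}, and the monotonicity of $G$ in $A$ is immediate. The only substantive observation is that squaring $a_0$ produces a more permissive constant $2 - A \geq 1$ in place of the $1$ appearing in Theorem \ref{T2}; nevertheless $G(A,\rho)$ still attains its supremum over $A \in [0,1]$ at $A = 1$ (that is, at $a_0 = 0$), so the sharp radius coincides with that of Theorem \ref{T2}. This is consistent with sharpness being witnessed in the limit $a \to 0^+$, where $a_0^2$ and $a_0$ agree.
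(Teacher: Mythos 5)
Your proposal is correct and follows exactly the route the paper intends: the paper's own proof of this statement is a one-line appeal to ``analogous reasoning'' from Theorem \ref{T2}, and your argument is precisely that reasoning spelled out, with the only new ingredient being the observation that replacing $a_0$ by $a_0^2=1-A(2-A)$ changes the constant term of $F_{10}$ from $-1$ to $-(2-A)$, which is still maximized (and equal to $-1$) at $A=1$, so the radius and the extremal configuration $a\to 0^+$ are unchanged. The computations, the monotonicity in $A$, and the sharpness example all check out.
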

\begin{proof} 
By employing analogous reasoning to that utilized in the proof of \textrm{Theorem \ref{T2}}, we arrive at the desired conclusion.\end{proof}
In the following, we obtain the sharp refined version of Bohr-type inequality in the settings of \textrm{Theorem E} without compromising the radius. 
\begin{theo}\label{T3} Suppose that $f(z)=h(z)+\ol{g(z)}=\sum_{n=0}^\infty a_n z^n+\ol{\sum_{n=1}^\infty b_n z^n}$ is a sense-preserving $K$-quasiconformal harmonic mapping in $\mathbb{D}$, where $h(z)$ is bounded in $\mathbb{D}$. Then
\beas&&\sum_{n=0}^\infty |a_n|\rho^n+\left(\frac{1}{1+|a_0|}+\frac{\rho}{1-\rho}\right)\sum_{n=1}^\infty |a_n|^2\rho^{2n}+\sum_{n=1}^\infty |b_n|\rho^n\\
&&+\frac{8K^2(3K+1)^2}{(5K+1)^2(K+1)^2} \frac{S_\rho(h)}{\pi}\leq \Vert h(z)\Vert_\infty\quad\text{for}\quad \rho\leq \rho_0=\frac{K+1}{5K+1}.\eeas
The numbers $8K^2(3K+1)^2/((5K+1)^2(K+1)^2)$ and  $(K+1)/(5K+1)$ cannot be replaced by larger values.
\end{theo}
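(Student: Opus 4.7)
The plan is to reduce the proof to the chain of estimates already carried out in \textrm{Theorem \ref{T1}} by invoking \textrm{Lemma \ref{lem5}} to absorb the new refined quadratic term. As before, assume without loss of generality that $\Vert h\Vert_\infty\leq 1$ and set $|a_0|=a\in[0,1)$. Since $f$ is $K$-quasiconformal, Schwarz's lemma applied to $\omega=g'/h'$ gives $|g'(z)|\leq k|h'(z)|$ with $k=(K-1)/(K+1)$, and the argument in the proof of \textrm{Theorem \ref{T1}} (Cauchy--Schwarz combined with \textrm{Lemma \ref{lem3}}) yields
\[
\sum_{n=1}^\infty |b_n|\rho^n\leq k(1-a^2)\frac{\rho}{1-\rho},\qquad \frac{S_\rho(h)}{\pi}\leq (1-a^2)^2\frac{\rho^2}{(1-\rho^2)^2},
\]
where the second bound uses $|a_n|\leq 1-a^2$ from \textrm{Lemma \ref{lem2}} at $z=0$.

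The key new step is to apply \textrm{Lemma \ref{lem5}} with $N=1$ so that $t=\lfloor 0/2\rfloor=0$ and the middle sum vanishes; this produces precisely the refined coefficient inequality
\[
\sum_{n=1}^\infty |a_n|\rho^n+\left(\frac{1}{1+a}+\frac{\rho}{1-\rho}\right)\sum_{n=1}^\infty |a_n|^2\rho^{2n}\leq \frac{(1-a^2)\rho}{1-\rho}.
\]
Adding $|a_0|=a$ to both sides shows that the full left-hand side of the theorem is bounded by
\[
a+(1+k)(1-a^2)\frac{\rho}{1-\rho}+\lambda(1-a^2)^2\frac{\rho^2}{(1-\rho^2)^2},\qquad \lambda=\frac{8K^2(3K+1)^2}{(5K+1)^2(K+1)^2},
\]
which is identical to the quantity $1+F_1(a,\rho)$ analyzed in the proof of \textrm{Theorem \ref{T1}}. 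Consequently, the two-step monotonicity argument there -- first $\partial_\rho F_1>0$, then the monotonicity of the auxiliary function $F_2(a)$ under the constraint $\lambda\leq 8K^2(3K+1)^2/((5K+1)^2(K+1)^2)$ -- transfers verbatim to give the desired inequality for $\rho\leq (K+1)/(5K+1)$.

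For sharpness, I would recycle the extremal family $h_1(z)=(a-z)/(1-az)$, $g_1(z)=\lambda k\sum_{n\geq 1}A_n z^n$ (with $|\lambda|=1$) used in \textrm{Theorem \ref{T1}}. A direct calculation gives $\sum_{n=1}^\infty |A_n|^2\rho^{2n}=(1-a^2)^2\rho^2/(1-a^2\rho^2)$, so the additional refined summand contributes $\bigl(\tfrac{1}{1+a}+\tfrac{\rho}{1-\rho}\bigr)(1-a^2)^2\rho^2/(1-a^2\rho^2)$, which is dominated by $(1-a)$ times a bounded factor and hence vanishes as $a\to 1^-$. Therefore the same limit $a\to 1^-$ used in \textrm{Theorem \ref{T1}} establishes that $(K+1)/(5K+1)$ cannot be enlarged and that the constant $\lambda$ is optimal.

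The only non-routine point is the observation that Lemma \ref{lem5} at $N=1$ absorbs the whole refined quadratic block into the same $(1-a^2)\rho/(1-\rho)$ estimate that was used in Theorem \ref{T1} to bound $\sum_{n=1}^\infty |a_n|\rho^n$ alone; once that identification is made, no new extremal analysis or additional inequality is required, since the extremal Möbius function simultaneously saturates the area bound, the co-analytic bound, and (in the limit $a\to 1^-$) the refined coefficient inequality.
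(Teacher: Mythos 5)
Your proposal is correct and follows essentially the same route as the paper: Lemma \ref{lem5} with $N=1$ (so $t=0$) absorbs the refined quadratic block into the bound $(1-a^2)\rho/(1-\rho)$, reducing everything to the function $F_1(a,\rho)$ from Theorem \ref{T1}, and the sharpness is checked on the same M\"obius-type extremal family. The only cosmetic difference is that the paper recomputes the full extremal sum explicitly (its $G_1(a,\rho)$) rather than observing, as you do, that the added nonnegative refined term cannot spoil the limit $a\to 1^-$.
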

\begin{proof}
For simplicity, we assume that $\Vert h(z)\Vert_\infty\leq 1$. Then, we have $|a_n|\leq 1-|a_0|^2$ for $n\geq 1$. Using similar arguments as in the proof of \textrm{Theorem \ref{T1}}, and considering \textrm{Lemmas \ref{lem1}, \ref{lem2}}, and \ref{lem3} together with the condition $|g'(z)|\leq k|h'(z)|$, we have
\beas\sum_{n=1}^\infty |b_n|\rho^n\leq \left(\sum_{n=1}^\infty |b_n|^2\rho^n\right)^{1/2}\left(\sum_{n=1}^\infty\rho^n\right)^{1/2} \leq k(1-a^2)\frac{\rho}{1-\rho},\eeas
where $|a_0|=a\in[0,1)$.
From (\ref{E2}) and by \textrm{Lemma \ref{lem5}}, we have 
\beas &&\sum_{n=0}^\infty |a_n|\rho^n+\left(\frac{1}{1+|a_0|}+\frac{\rho}{1-\rho}\right)\sum_{n=1}^\infty |a_n|^2\rho^{2n}+\sum_{n=1}^\infty |b_n|\rho^n+\lambda \frac{S_\rho(h)}{\pi}\\[2mm]
&&\leq a+(1-a^2)\frac{\rho}{1-\rho}+k(1-a^2)\frac{\rho}{1-\rho}+\lambda (1-a^2)^2\frac{\rho^2}{(1-\rho^2)^2}.
\eeas
The remaining calculations are derived from \textrm{Theorem \ref{T1}}.\\[2mm]
\indent To prove the sharpness of the result, we consider the function $f_6(z)=h_6(z)+\ol{g_6(z)}$ in $\mathbb{D}$ such that 
\beas h_6(z)=\frac{a-z}{1-az}=A_0+\sum_{n=1}^\infty A_n z^n,\eeas
where $A_0=a$, $A_n=-(1-a^2)a^{n-1}$ for $n\geq 1$, $a\in[0,1)$ and $g_6(z)=\lambda k \sum_{n=1}^\infty A_n z^n$, where $|\lambda|=1$ and $k=(K-1)/(K+1)$.
Thus,
\beas S_2:&=&\sum_{n=0}^\infty |A_n|\rho^n+\left(\frac{1}{1+|A_0|}+\frac{\rho}{1-\rho}\right)\sum_{n=1}^\infty |A_n|^2\rho^{2n}+\sum_{n=1}^\infty |k\lambda A_n|\rho^n\\[2mm]
&&+\frac{8K^2(3K+1)^2}{(5K+1)^2(K+1)^2}\frac{S_\rho(h_6)}{\pi}\\[2mm]
&=&a+(1+k)(1-a^2)\rho\sum_{n=1}^\infty (a\rho)^{n-1}+\frac{1+a\rho}{(1+a)(1-\rho)}(1-a^2)^2\rho^2\sum_{n=1}^\infty (a\rho)^{2(n-1)}\\[2mm]
&&+\frac{8K^2(3K+1)^2}{(5K+1)^2(K+1)^2}\sum_{n=1}^\infty n|A_n|^2 \rho^{2n}\\[2mm]
&=&a+\frac{(1+k)(1-a^2)\rho}{1-a\rho}+\frac{1+a\rho}{(1+a)(1-\rho)}\frac{(1-a^2)^2\rho^2}{1-a^2\rho^2}\\[2mm]
&&+\frac{8K^2(3K+1)^2}{(5K+1)^2(K+1)^2}(1-a^2)^2\sum_{n=1}^\infty n a^{2(n-1)} \rho^{2n}\\[2mm]
&=& 1+(1-a) G_1(a,\rho),\eeas
where 
\beas G_1(a,\rho)&=&\frac{2K(1+a)\rho}{(K+1)(1-a\rho)}+\frac{8K^2(3K+1)^2}{(5K+1)^2(K+1)^2}\frac{(1-a^2)(1+a)\rho^2}{(1-a^2\rho^2)^2}\\
&&+\frac{(1-a^2)\rho^2}{(1-a\rho)(1-\rho)}-1.\eeas
Differentiating partially $G_1(a,\rho)$ with respect to $\rho$, we have 
\beas &&\frac{\pa}{\pa \rho}G_1(a,\rho)=(1-a^2)\left(\frac{a r^2}{(1 - r) (1 - a r)^2} + \frac{2 r}{(1 - r) (1 - a r)} + \frac{r^2}{(1 - r)^2 (1 - a r)}\right)\\[2mm]
&&+\frac{2K(1+a)}{(K+1)(1-a\rho)^2}+\frac{16K^2(3K+1)^2}{(5K+1)^2(K+1)^2}\frac{(1-a^2)(1+a)\rho(1+\rho^2)}{(1-a^2\rho^2)^3}>0\eeas
for $\rho\in(0,1)$.
Therefore, $G_1(a,\rho)$ is a strictly increasing function of $\rho\in(0,1)$. Thus, for $\rho>(K+1)/(5K+1)$, we have
\beas G_1(a,\rho)&>& G_1(a,(K+1)/(5K+1))\\[2mm]&=&\frac{2K(1+a)}{4K}+(1-a^2)\frac{(1+K)(1+5K)^2(1-a+9K-aK)} {16 K^2(1-a+5K-a K)^2}\\[2mm]
&&+\frac{8K^2(3K+1)^2(1-a^2)(1+a)}{((5+a)K+1+a)^2((5-a)K+1-a)^2}-1\to 0\;\text{as}\;a\to1^-.\eeas 
 Hence $S_2:=1+(1-a) G_1(a,\rho)>1$ for $\rho>(K+1)/(5K+1)$. 												
This shows that $(K+1)/(5K+1)$ is the best possible. This completes the proof.
\end{proof}
The following result is the sharp refined Bohr-Rogosinski inequality in the settings of \textrm{Theorem E}. 
\begin{theo}\label{T4} Suppose that $f(z)=h(z)+\ol{g(z)}=\sum_{n=0}^\infty a_n z^n+\ol{\sum_{n=1}^\infty b_n z^n}$ is a sense-preserving $K$-quasiconformal harmonic mapping in $\mathbb{D}$, where $\Vert h(z)\Vert_\infty\leq 1$ in $\mathbb{D}$. For $p\in(0,2]$, we have 
\beas |h(z)|^p+\sum_{n=1}^\infty |a_n|\rho^n+\left(\frac{1}{1+|a_0|}+\frac{\rho}{1-\rho}\right)\sum_{n=1}^\infty |a_n|^2\rho^{2n}+\sum_{n=1}^\infty |b_n|\rho^n \leq 1\quad\text{for}\quad \rho\leq \rho_0,\eeas
where $\rho_0$ is the unique positive root of the equation 
\beas p(1-\rho)^2-\frac{4K}{K+1}\rho(1+\rho)=0.\eeas The number $\rho_0$ is sharp.
\end{theo}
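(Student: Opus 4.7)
The plan is to bound each of the four terms on the left-hand side using the tools already in hand, then reduce the problem to a one-variable inequality in $\rho$. Assume $\|h\|_\infty\leq 1$ and set $a:=|a_0|$, $u:=(a+\rho)/(1+a\rho)$. Pick's invariant Schwarz lemma (\textrm{Lemma \ref{lem1}}) gives $|h(z)|^p\leq u^p$. The bound $\sum_{n=1}^\infty|b_n|\rho^n\leq k(1-a^2)\rho/(1-\rho)$ follows from \textrm{Lemma \ref{lem3}} together with Cauchy--Schwarz, exactly as in the proof of \textrm{Theorem \ref{T1}}. Finally, \textrm{Lemma \ref{lem5}} with $N=1$ (so $t=0$ and the $\mathrm{sgn}(t)$-sum disappears) furnishes
$$\sum_{n=1}^\infty|a_n|\rho^n+\left(\frac{1}{1+a}+\frac{\rho}{1-\rho}\right)\sum_{n=1}^\infty|a_n|^2\rho^{2n}\leq (1-a^2)\frac{\rho}{1-\rho}.$$
Adding these three bounds, it is enough to prove $(1+k)(1-a^2)\rho/(1-\rho)\leq 1-u^p$.

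The decisive step is the elementary estimate
$$1-u^p\geq \tfrac{p}{2}(1-u^2),\qquad u\in(0,1),\ p\in(0,2],$$
which I verify by noting that $\phi(u):=1-u^p-(p/2)(1-u^2)$ satisfies $\phi(1)=0$ and $\phi'(u)=p(u-u^{p-1})\leq 0$ on $(0,1)$ precisely when $p\leq 2$. Using $1-u^2=(1-a^2)(1-\rho^2)/(1+a\rho)^2$ and canceling the common factor $1-a^2>0$, the required inequality reduces to
$$\frac{2(1+k)\rho(1+a\rho)^2}{(1-\rho)^2(1+\rho)}\leq p.$$
Since $(1+a\rho)^2$ is increasing in $a\in[0,1)$, the worst case is $a\to 1^-$, and substituting $1+k=2K/(K+1)$ yields $p(1-\rho)^2-4K\rho(1+\rho)/(K+1)\geq 0$. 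Writing $\Phi(\rho)$ for the left-hand side, $\Phi(0)=p>0$, $\Phi(1)=-8K/(K+1)<0$, and $\Phi'(\rho)<0$ on $[0,1)$, so $\Phi$ has a unique positive root, which is the desired $\rho_0$.

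For sharpness I would use the standard extremal $h_\ast(z)=(a-z)/(1-az)$ with $g_\ast(z)=\lambda k(h_\ast(z)-a)$, $|\lambda|=1$, evaluated at $z=-\rho$. Since the term $\bigl(1/(1+a)+\rho/(1-\rho)\bigr)\sum|A_n|^2\rho^{2n}$ is of order $(1-a)^2$, a first-order Taylor expansion in $1-a$ (using $u^p=1-p(1-a)(1-\rho)/(1+\rho)+O((1-a)^2)$) shows that the left-hand side equals
$$1+(1-a)\left[-\frac{p(1-\rho)}{1+\rho}+\frac{4K\rho}{(K+1)(1-\rho)}\right]+O((1-a)^2),$$
and the bracket is positive precisely when $\rho>\rho_0$; letting $a\to 1^-$ then violates the inequality for any such $\rho$. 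The main obstacle I anticipate is choosing the correct auxiliary inequality for $1-u^p$: the estimate $1-u^p\geq (p/2)(1-u^2)$ is tight as $u\to 1^-$, which is exactly the sharpness limit $a\to 1^-$, and this is why it reproduces precisely the equation defining $\rho_0$.
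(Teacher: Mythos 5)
Your proposal is correct, and its key step differs genuinely from the paper's. The paper bounds the four terms exactly as you do (Pick's lemma for $|h(z)|^p$, Lemma \ref{lem3} plus Cauchy--Schwarz for the $b_n$-sum, Lemma \ref{lem5} for the combined $a_n$-sums), arriving at the same target $u^p+(1+k)(1-a^2)\rho/(1-\rho)\leq 1$ with $u=(a+\rho)/(1+a\rho)$; but it then treats this as a two-variable problem, splitting into the cases $p\in(0,1]$ and $p\in(1,2]$ and proving by separate monotonicity arguments (the second case needing an auxiliary function and a further derivative computation) that the expression is increasing in $a$ with limit $0$ as $a\to 1^-$, subject to $p(1-\rho)^2-2(k+1)\rho(1+\rho)\geq 0$. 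Your route replaces that entire case analysis with the single elementary inequality $1-u^p\geq \tfrac{p}{2}(1-u^2)$ for $u\in[0,1]$, $p\in(0,2]$, after which the factorization $1-u^2=(1-a^2)(1-\rho^2)/(1+a\rho)^2$ cancels $1-a^2$ and reduces everything to the one-variable condition $2(1+k)\rho(1+\rho)\leq p(1-\rho)^2$, i.e.\ exactly the equation defining $\rho_0$. This is shorter, handles all $p\in(0,2]$ uniformly, and loses nothing because your auxiliary inequality is tight precisely as $u\to 1^-$, which is the extremal regime $a\to 1^-$; your first-order sharpness expansion matches the paper's computation of $\lim_{a\to 1^-}G_4(a,\rho)=-p(1-\rho)/(1+\rho)+2(1+k)\rho/(1-\rho)$. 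The only cosmetic caveat is that your monotonicity claim for $\phi$ should be stated on $(0,1]$ with $\phi(1)=0$ (extending to $u=0$ by $\phi(0)=1-p/2\geq 0$), but that is exactly what your derivative computation gives.
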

\begin{proof}
Using similar arguments as in the proof of \textrm{Theorem \ref{T1}}, and in view of \textrm{Lemmas \ref{lem1}, \ref{lem2}}, and \ref{lem3} together with the condition $|g'(z)|\leq k|h'(z)|$, we have
\beas
&&\sum_{n=1}^\infty |b_n|\rho^n\leq \left(\sum_{n=1}^\infty |b_n|^2\rho^n\right)^{1/2}\left(\sum_{n=1}^\infty\rho^n\right)^{1/2} \leq k(1-a^2)\frac{\rho}{1-\rho}\\[2mm]\text{and}&&|h(z)|\leq \frac{|h(0)|+|z|}{1+|h(0)||z|},\eeas
where $|a_0|=a\in[0,1)$.
In view of \textrm{Lemma \ref{lem5}}, we have 
\beas &&|h(z)|^p+\sum_{n=1}^\infty |a_n|\rho^n+\left(\frac{1}{1+|a_0|}+\frac{\rho}{1-\rho}\right)\sum_{n=1}^\infty |a_n|^2\rho^{2n}+\sum_{n=1}^\infty |b_n|\rho^n \\[2mm]
&&\leq \left(\frac{a+\rho}{1+a\rho}\right)^p+(k+1)(1-a^2)\frac{\rho}{1-\rho}\\[2mm]
&&=1+G_2(a,\rho),\eeas
where \beas G_2(a,\rho)=\left(\frac{a+\rho}{1+a\rho}\right)^p+(k+1)(1-a^2)\frac{\rho}{1-\rho}-1.\eeas  
We now consider the following cases.\\
{\bf Case 1.} Let $p\in(0,1]$.
Differentiating partially $G_2(a,\rho)$ twice with respect to $a$, we have 
\bea\label{a1}\frac{\pa}{\pa a}G_2(a,\rho)&=&\frac{p(a+\rho)^{p-1}(1-\rho^2)}{(1+a\rho)^{p+1}}-2a(k+1)\frac{\rho}{1-\rho}\\[2mm]
\frac{\pa^2}{\pa a^2}G_2(a,\rho)&=&\frac{p(1-\rho^2)(a+\rho)^{p-2}}{(1+a\rho)^{p+2}}\left((p-1)(1+a\rho)-(p+1)(a+\rho)\rho\right)\nonumber\\[2mm]
&&-2(k+1)\frac{\rho}{1-\rho}\leq 0\nonumber\eea
for $a\in[0,1)$ and $p\in(0,1]$. Therefore, $\frac{\pa}{\pa a}G_2(a,\rho)$ is a monotonically decreasing function of $a\in[0,1)$. Thus, we have 
\beas \frac{\pa}{\pa a}G_2(a,\rho)\geq \frac{\pa}{\pa a}G_2(1,\rho)
=\frac{p(1-\rho)^2-2(k+1)\rho(1+\rho)}{1-\rho^2}\geq 0,\eeas
for $\rho\leq \rho_0$, where $\rho_0$ is the unique positive root of the equation $p(1-\rho)^2-2(k+1)\rho(1+\rho)=0$. 
Therefore, $G_2(a,\rho)$ is a monotonically increasing function of $a\in[0,1)$ and it follows that 
\beas G_2(a,\rho)\leq G_2(1,\rho)=0\quad\text{for}\quad \rho\leq \rho_0.\eeas 
{\bf Case 2.} Let $p\in(1,2]$. From (\ref{a1}), we have 
\bea\label{a2} \frac{\pa}{\pa a}G_2(a,\rho)&=&\frac{p(a+\rho)^{p-1}(1-\rho^2)}{(1+a\rho)^{p+1}}-2a(k+1)\frac{\rho}{1-\rho}\nonumber\\[2mm]
&=&\frac{p(1-\rho)}{(1+\rho)}G_3(a,r)-2a(k+1)\frac{\rho}{1-\rho},\eea
where 
\beas G_3(a,\rho)=\frac{(1+\rho)^2(a+\rho)^{p-1}}{(1+a\rho)^{p+1}}.\eeas
Differentiating partially $G_3(a,\rho)$ with respect to $\rho$, we see that
\beas\frac{\pa}{\pa a}G_3(a,\rho)
=\frac{(1-a)(1+r)(a+r)^{p-2}}{(1 + a r)^{p+2}}\left(r(p(a+1)+1-a)+a(1+p)+p-1)\right)\geq 0\eeas
 for $\rho\in[0,1)$.
Thus, $G_3(a,\rho)$ is a monotonically increasing function of $\rho\in[0,1)$ and it follows that  
\beas G_3(a,\rho)\geq G_3(a,0)=a^{p-1}\quad\text{for}\quad a\in[0,1). \eeas
From (\ref{a2}), we have 
\beas \frac{\pa}{\pa a}G_2(a,\rho)&\geq& \frac{p(1-\rho)}{(1+\rho)}a^{p-1}-2a(k+1)\frac{\rho}{1-\rho}\\[2mm]
&=&a^{p-1}\left(\frac{p(1-\rho)}{1+\rho}-\frac{2a^{2-p}(1+k)(1+\rho)}{1-\rho}\right)\\[2mm]
&\geq &a^{p-1}\left(\frac{p(1-\rho)}{1+\rho}-\frac{2(1+k)(1+\rho)}{1-\rho}\right)\\[2mm]
&=&a^{p-1}\frac{p(1-\rho)^2-2\rho(1+k)(1+\rho)}{1-\rho^2}\geq 0\eeas
for $\rho\leq \rho_0$, where $\rho_0$ is the unique positive root of the equation $p(1-\rho)^2-2(k+1)\rho(1+\rho)=0$. 
Therefore, $G_2(a,\rho)$ is a monotonically increasing function in $a\in [0, 1)$ and it follows that $G_2(a,\rho)\leq G_2(1,\rho) = 0$, which is true for $\rho\leq \rho_0$.
\\[2mm]
\indent To prove the sharpness of the result, we consider the function $f_7(z)=h_7(z)+\ol{g_7(z)}$ in $\mathbb{D}$ such that 
\beas h_7(z)=\frac{a-z}{1-az}=A_0+\sum_{n=1}^\infty A_n z^n,\eeas
where $A_0=a$, $A_n=-(1-a^2)a^{n-1}$ for $n\geq 1$, $a\in[0,1)$ and $g_7(z)=\lambda k \sum_{n=1}^\infty A_n z^n$, where $|\lambda|=1$ and $k=(K-1)/(K+1)$.
Thus,
\beas &&|h_7(\rho)|^p+\sum_{n=1}^\infty |A_n|\rho^n+\left(\frac{1}{1+|A_0|}+\frac{\rho}{1-\rho}\right)\sum_{n=1}^\infty |A_n|^2\rho^{2n}+\sum_{n=1}^\infty |k\lambda A_n|\rho^n\\
&&=\left(\frac{a+\rho}{1+a\rho}\right)^p+(1+k)(1-a^2)\rho\sum_{n=1}^\infty (a\rho)^{n-1}+\frac{(1+a\rho)(1-a^2)^2\rho^2}{(1+a)(1-\rho)}\sum_{n=1}^\infty (a\rho)^{2(n-1)}\\
&&=\left(\frac{a+\rho}{1+a\rho}\right)^p+\frac{(1+k)(1-a^2)\rho}{1-a\rho}+\frac{(1+a\rho)}{(1+a)(1-\rho)}\frac{(1-a^2)^2\rho^2}{1-a^2\rho^2}\\[2mm]
&&=1+(1-a)G_4(a,\rho),\eeas
where 
\beas G_4(a,\rho)=\frac{1}{(1-a)}\left(\left(\frac{a+\rho}{1+a\rho}\right)^p-1\right)+\frac{(1+k)(1+a)\rho}{1-a\rho}+\frac{(1-a^2)\rho^2}{(1-\rho)(1+a\rho)}.\eeas
For $\rho>\rho_0$, we see that
\beas \lim_{a\to1^-} G_4(a,\rho)&=&\lim_{a\to 1^-}\left(-p\left(\frac{a+\rho}{1+a\rho}\right)^{p-1}\left(\frac{1}{1+a\rho}-\frac{\rho(a+\rho)}{(1+a\rho)^2}\right)\right)+\frac{2(1+k)\rho}{1-\rho}\\[2mm]
&=&-p\left(\frac{1-\rho}{1+\rho}\right)+\frac{2(1+k)\rho}{1-\rho}>0,\eeas
which shows that the radius $\rho_0$ is the best possible. This completes the proof.
\end{proof}
In the following, we obtain the sharp refined version of the Bohr-type inequality in the settings of \textrm{Theorem F} in which $|a_0|$ and $|a_1|$ are replaced by $|h(z)|$ and $|h'(z)|$ in the majorant series, respectively.
\begin{theo}\label{T6} Suppose that $f(z)=h(z)+\ol{g(z)}=\sum_{n=0}^\infty a_n z^n+\ol{\sum_{n=2}^\infty b_n z^n}$ is a sense-preserving $K$-quasiconformal harmonic mapping in $\mathbb{D}$, where $\Vert h(z)\Vert_\infty\leq 1$ in $\mathbb{D}$. Then
\beas |h(z)|+|h'(z)|\rho+\sum_{n=2}^\infty |a_n|\rho^n+\left(\frac{1}{1+|a_0|}+\frac{\rho}{1-\rho}\right)\sum_{n=1}^\infty |a_n|^2\rho^{2n}+\sum_{n=2}^\infty |b_n|\rho^n \leq 1\eeas
for $\rho\leq \rho_0\leq 1/3$, where 
$\rho_0\in(0,1/3)$ is the unique root of the equation 
\beas 2\rho^4+3\rho^3+\rho^2+3\rho+2\frac{(K-1)}{(K+1)}(1+\rho)^2(\rho+(1-\rho)\log(1-\rho))=1.\eeas
The number $\rho_0$ is sharp.
\end{theo}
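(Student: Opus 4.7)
The plan is to follow the template of \textrm{Theorem \ref{T3}}, adapted to the Theorem F setting in which the co-analytic part $g$ has no constant or linear term, and to combine it with the Schwarz--Pick replacements used in \textrm{Theorem \ref{TT1}}. Assume $\Vert h\Vert_\infty\le 1$ and set $a=|a_0|$; Lemma \ref{lem2} gives $|a_n|\le 1-a^2$ for $n\ge 1$. Because $g(z)=\sum_{n\ge 2}b_nz^n$, the dilatation $\omega=g'/h'$ vanishes at $0$ and satisfies $|\omega|\le k$, so Schwarz's lemma upgrades the $K$-quasiconformality condition to $|g'(z)|\le k|z|\,|h'(z)|$, which is exactly the hypothesis of Lemma \ref{lem30}. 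For $\rho\le 1/3$ that lemma yields $\sum_{n\ge 1}n|b_n|\rho^{n-1}\le k\sum_{n\ge 1}n|a_n|\rho^n\le k(1-a^2)\rho/(1-\rho)^2$, and integrating from $0$ to $\rho$ (using $b_1=0$) produces the key estimate
\[
\sum_{n\ge 2}|b_n|\rho^n\le k(1-a^2)\Bigl[\frac{\rho}{1-\rho}+\log(1-\rho)\Bigr],
\]
whose non-negative integrand is the source of the logarithmic term in the equation defining $\rho_0$.

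For the analytic part I apply Lemma \ref{lem1} to bound $|h(z)|\le (a+\rho)/(1+a\rho)$, Lemma \ref{lem2} (with $n=1$) to bound $|h'(z)|\le (1-|h(z)|^2)/(1-\rho^2)$, and Lemma \ref{lem5} with $N=2$ (so $t=0$) to bound
\[
\sum_{n\ge 2}|a_n|\rho^n+\Bigl(\frac{1}{1+a}+\frac{\rho}{1-\rho}\Bigr)\sum_{n\ge 1}|a_n|^2\rho^{2n}\le \frac{(1-a^2)\rho^2}{1-\rho}.
\]
Since $\rho\le 1/3<\sqrt{2}-1$, the map $x\mapsto x+\rho(1-x^2)/(1-\rho^2)$ is strictly increasing on $[0,1]$, so substituting the Schwarz--Pick bound for $|h(z)|$ into $|h(z)|+\rho|h'(z)|$ and assembling all the estimates, the left-hand side of the theorem is at most $1+(1-a)\Psi(a,\rho)$, where
\[
\Psi(a,\rho)=-\frac{1-2\rho-a\rho^2}{(1+a\rho)^2}+\frac{(1+a)\rho^2}{1-\rho}+k(1+a)\Bigl[\frac{\rho}{1-\rho}+\log(1-\rho)\Bigr].
\]

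The central step is to show $\Psi(a,\rho)\le 0$ for $\rho\le \rho_0$. Differentiating in $a$, the three summands contribute $\rho(2-3\rho-a\rho^2)/(1+a\rho)^3$ (positive since $2-3\rho-a\rho^2\ge 2-1-\tfrac19>0$ on $[0,1)\times[0,1/3]$), $\rho^2/(1-\rho)>0$, and $k[\rho/(1-\rho)+\log(1-\rho)]\ge 0$, so $\Psi$ is increasing in $a$ and $\Psi(a,\rho)\le \Psi(1,\rho)$. Multiplying the inequality $\Psi(1,\rho)\le 0$ through by the positive quantity $(1-\rho)(1+\rho)^2$ and performing the routine polynomial algebra reduces it to
\[
F(\rho):=2\rho^4+3\rho^3+\rho^2+3\rho+2k(1+\rho)^2\bigl[\rho+(1-\rho)\log(1-\rho)\bigr]\le 1,
\]
with $k=(K-1)/(K+1)$. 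Now $F$ is strictly increasing on $(0,1)$ (use $\tfrac{d}{d\rho}[\rho+(1-\rho)\log(1-\rho)]=-\log(1-\rho)\ge 0$ together with the positivity and monotonicity of $(1+\rho)^2$), $F(0)=0$, and direct evaluation shows $F(1/3)>1$, so the unique positive root $\rho_0$ lies in $(0,1/3)$ and \emph{a posteriori} justifies the use of Lemma \ref{lem30}.

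For sharpness I plan to use $h(z)=(a-z)/(1-az)$ paired with the co-analytic part that saturates Lemma \ref{lem30}, namely $g'(z)=\lambda k\,zh'(z)=-\lambda k(1-a^2)z/(1-az)^2$ for $|\lambda|=1$; explicit integration produces $g(z)=\lambda k(1-a^2)a^{-2}\bigl[-az/(1-az)-\log(1-az)\bigr]$, whose Taylor coefficients satisfy $b_1=0$ and $|b_n|=k(1-a^2)a^{n-2}(n-1)/n$ for $n\ge 2$, re-assembling into the log-type sum on the left-hand side. Evaluating the full expression at $z=-\rho$ and letting $a\to 1^-$, the quantity takes the form $1+(1-a)\widetilde\Psi(a,\rho)$ whose limit as $a\to 1^-$ has the sign of $F(\rho)-1$, hence is strictly positive for $\rho>\rho_0$. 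The main obstacle I anticipate is the bookkeeping in the monotonicity step---tracking the three derivative contributions to $\partial_a\Psi$ and confirming the sign of $2-3\rho-a\rho^2$ on the full rectangle---together with the $a\to 1^-$ limit in the sharpness argument, where the integrated $\log(1-a\rho)$ arising from $g$ must combine cleanly with the Schwarz--Pick terms in $h$ to reproduce exactly the combination $\rho+(1-\rho)\log(1-\rho)$ appearing in $F$.
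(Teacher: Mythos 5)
Your proposal is correct and follows essentially the same route as the paper's proof: the same use of Lemma \ref{lem1}, Lemma \ref{lem2}, Lemma \ref{lem5} (with $N=2$, $t=0$), and Lemma \ref{lem30} integrated over $[0,\rho]$ to produce the logarithmic term, the same monotonicity-in-$a$ reduction to the boundary value at $a=1$ (your $\Psi$ is just the paper's $G_5$ divided by the positive factor $(1+a\rho)^2(1-\rho)$), and the same extremal pair $h(z)=(a-z)/(1-az)$, $g'=\lambda k z h'$ with $a\to1^-$ for sharpness. No gaps.
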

\begin{proof}
Since $\Vert h(z)\Vert_\infty\leq 1$, in view of \textrm{lemma \ref{lem2}}, we have $|a_n|\leq 1-|a_0|^2$ for $n\geq 1$.
Since $f$ is locally univalent and $K$-quasiconformal sense-preserving harmonic mapping on $\mathbb{D}$ with $g'(0)=b_1=0$, Schwarz's lemma gives that the dilatation 
$\omega=g'/h'$ is analytic in $\mathbb{D}$ and $|\omega(z)|\leq k|z|$, {\it i.e.}, $|g'(z)|\leq k|zh'(z)|$ in $\mathbb{D}$, where $K = (1+k)/(1-k) \geq 1$, $k\in[0,1)$.  
In view of \textrm{Lemma \ref{lem30}}, we have
\bea\label{p2} \sum_{n=2}^\infty n |b_n|\rho^{n-1}\leq k \sum_{n=1}^\infty n|a_n|\rho^n\leq k(1-a^2)\sum_{n=1}^\infty n\rho^n=k(1-a^2)\frac{\rho}{(1-\rho)^2}\eea
for $|z|=\rho\leq 1/3$. Integrate (\ref{p2}) on $[0, \rho]$, we have 
\be\label{p6}\sum_{n=2}^\infty |b_n|\rho^{n}\leq k(1-a^2)\int_{0}^\rho\frac{x}{(1-x)^2} dx=k(1-a^2)\left(\frac{\rho}{1-\rho}+\log(1-\rho)\right)\;\;\text{for}\;\;\rho\leq \frac{1}{3}.\ee
Let $|a_0|=a\in[0,1)$. In view of \textrm{Lemma \ref{lem2}}, we have 
\beas|h(z)|\leq \frac{|h(0)|+|z|}{1+|h(0)||z|}\quad\text{and}\quad \left|h'(z)\right|\leq \frac{1-|h(z)|^2}{1-|z|^2}.\eeas
It is evident that $\rho/(1-\rho^2)\leq 1/2$ for any $\rho\in[0,\sqrt{2}-1]$.
Using similar arguments as in the proof of \textrm{Theorem \ref{TT1}}, and in view of \textrm{Lemma \ref{lem5}}, we have 
\beas &&|h(z)|+|h'(z)|\rho+\sum_{n=2}^\infty |a_n|\rho^n+\left(\frac{1}{1+|a_0|}+\frac{\rho}{1-\rho}\right)\sum_{n=1}^\infty |a_n|^2\rho^{2n}+\sum_{n=2}^\infty |b_n|\rho^n\\[2mm]
&&\leq \frac{a+\rho}{1+a\rho}+\frac{\rho}{1-\rho^2}\left(1-\left(\frac{a+\rho}{1+a\rho}\right)^2\right)+(1-a^2)\frac{\rho^2}{1-\rho}\\
&&+ k(1-a^2)\left(\frac{\rho}{1-\rho}+\log(1-\rho)\right)\\[2mm]
&&=1+\frac{(1-a) G_5(a,\rho)}{(1+a\rho)^2(1-\rho)},\eeas
where 
\beas G_5(a,\rho)=(1+a)H_1(\rho)+(1+a)(1+a\rho)^2(H_2(\rho)+ H_3(\rho))-H_4(\rho)(1+a\rho)\eeas
with $H_1(\rho)=\rho(1-\rho)\geq 0$, $H_2(\rho)=\rho^2\geq 0$, $H_3(\rho)=k\left(\rho+(1-\rho)\log(1-\rho)\right)\geq 0$ and $H_4(\rho)=(1-\rho)^2\geq 0$, and the first inequality hold for any $\rho\leq 1/3\leq\sqrt{2}-1$.
Differentiating partially $G_5(a,\rho)$ twice with respect to $a$, we have 
\beas\frac{\pa}{\pa a}G_5(a,\rho)&=&H_1(\rho)+((1+a\rho)^2+2\rho(1+a)(1+a\rho))(H_2(\rho)+ H_3(\rho))-\rho H_4(\rho),\\[2mm]
\frac{\pa^2}{\pa a^2}G_5(a,\rho)&=&(2(1+a\rho)\rho+2\rho(1+a\rho)+2\rho^2(1+a))(H_2(\rho)+ H_3(\rho))\geq 0.\eeas
Therefore, $\frac{\pa}{\pa a}G_5(a,\rho)$ is a monotonically increasing function of $a\in[0,1)$ and it follows that   
\beas \frac{\pa}{\pa a}G_5(a,\rho)\geq \frac{\pa}{\pa a}G_5(0,\rho)
=\rho^3+2\rho^2+(1+2\rho)H_3(\rho)\geq 0.\eeas 
Therefore, $G_5(a,\rho)$ is a monotonically increasing function of $a\in[0,1)$ and hence, we have 
\beas G_5(a,\rho)\leq G_5(1,\rho)
=2\rho^4+3\rho^3+\rho^2+3\rho-1+2k(1+\rho)^2(\rho+(1-\rho)\log(1-\rho))\leq 0\eeas 
for $\rho\leq \rho_0\leq 1/3$,
where $\rho_0$ is the smallest root of the equation 
\bea\label{p3} 2\rho^4+3\rho^3+\rho^2+3\rho-1+2k(1+\rho)^2(\rho+(1-\rho)\log(1-\rho))=0,\eea
where $k=(K-1)/(K+1)$. Let 
\beas G_6(\rho)=\frac{2\rho^4+3\rho^3+\rho^2+3\rho-1}{(1+\rho)^2(1-\rho)}+2k\left(\frac{\rho}{1-\rho}+\log(1-\rho)\right).\eeas
It is evident that $G_6(0)=-1$, $G_6(1/3)=5/24+2k(1/2 -\log(3/2))>0$ and
\beas G_6'(r)=\frac{4-4\rho+ 14\rho^2+6\rho^3-2\rho^4-2\rho^5+2k (\rho+3\rho^2+3\rho^3+\rho^4)}{(1-\rho)^2(1+\rho)^3}\geq0\eeas
for $\rho\in[0, 1/3]$, which shows that $G_6(\rho)$ is a monotonically increasing function of $\rho$. Therefore, $\rho_0\in(0,1/3)$ is the unique root of the equation (\ref{p3}).\\[2mm]\indent To prove the sharpness of the result, we consider the function $f_8(z)=h_8(z)+\ol{g_8(z)}$ in $\mathbb{D}$ such that 
\beas h_8(z)=\frac{a-z}{1-az}=A_0+\sum_{n=1}^\infty A_n z^n,\eeas
where $A_0=a$, $A_n=-(1-a^2)a^{n-1}$ for $n\geq 1$, $a\in[0,1)$ and $g_8'(z)=\lambda k zh_8'(z)$, where $|\lambda|=1$ and $k=(K-1)/(K+1)$. If $g_8(z)=\sum_{n=2}^\infty B_n z^n$, then 
\beas B_n=-k\lambda \left(\frac{n-1}{n}\right)(1-a^2)a^{n-2}\quad\text{ for}\quad n\geq 2.\eeas
Therefore,
\beas &&|h_8(-\rho)|+|h_8'(-\rho)|\rho+\sum_{n=2}^\infty |A_n|\rho^n+\left(\frac{1}{1+|A_0|}+\frac{\rho}{1-\rho}\right)\sum_{n=1}^\infty |A_n|^2\rho^{2n}+\sum_{n=2}^\infty |B_n|\rho^n\\[2mm]
&&=\frac{a+\rho}{1+a\rho}+\frac{(1-a^2)\rho}{(1+a\rho)^2}+(1-a^2)\rho\sum_{n=2}^\infty (a\rho)^{n-1}+(1-a^2)k\rho^2\sum_{n=2}^\infty \frac{n-1}{n}(a\rho)^{n-2}\\[2mm]
&&+\frac{1+a\rho}{(1+a)(1-\rho)}(1-a^2)^2\rho^2\sum_{n=1}^\infty (a\rho)^{2(n-1)}\\[2mm]
&&=\frac{a+\rho}{1+a\rho}+\frac{(1-a^2)\rho}{(1+a\rho)^2}+\frac{(1-a^2)a\rho^2}{1-a\rho}+k(1-a^2)\frac{a\rho+(1-a\rho)\log(1-a\rho)}{a^2(1-a\rho)}\\[2mm]
&&+\frac{1+a\rho}{(1+a)(1-\rho)}\frac{(1-a^2)^2\rho^2}{1-a^2\rho^2}\\[2mm]
&&=1+(1-a)G_7(a,\rho),\eeas
where 
\beas&& G_7(a,\rho)=\frac{(1+a)\rho}{(1+a\rho)^2}+\frac{(1+a)a\rho^2}{1-a\rho}+k(1+a)\frac{a\rho+(1-a\rho)\log(1-a\rho)}{a^2(1-a\rho)}-\frac{1-\rho}{1+a\rho}\\
&&+\frac{(1-a^2)\rho^2}{(1-a\rho)(1-\rho)}.\eeas
It is evident that
\beas \lim_{a\to1^-} G_7(a,\rho)
=\frac{2\rho^4+3\rho^3+\rho^2+3\rho-1+2k(1+\rho)^2(\rho+(1-\rho)\log(1-\rho))}{(1+\rho)^2(1-\rho)}>0\eeas
for $\rho>\rho_0$, where $k=(K-1)/(K+1)$ and $\rho_0\in(0,1/3)$ is the unique positive root of the equation (\ref{p3}). This shows that the number $\rho_0$ is the best possible.
This completes the proof.
\end{proof}
Letting $K\to \infty$ in \textrm{Theorem \ref{T6}}, then we get the result.
\begin{cor} Suppose that $f(z)=h(z)+\ol{g(z)}=\sum_{n=0}^\infty a_n z^n+\ol{\sum_{n=2}^\infty b_n z^n}$ is a sense-preserving harmonic mapping in $\mathbb{D}$, where $\Vert h(z)\Vert_\infty\leq 1$ in $\mathbb{D}$. Then,
\beas |h(z)|+|h'(z)|\rho+\sum_{n=2}^\infty |a_n|\rho^n+\left(\frac{1}{1+|a_0|}+\frac{\rho}{1-\rho}\right)\sum_{n=1}^\infty |a_n|^2\rho^{2n}+\sum_{n=2}^\infty |b_n|\rho^n \leq 1\eeas
for $\rho\leq \rho_0\leq 1/3$, where $\rho_0=0.254876...$ is the unique positive root of the equation 
\beas G_8(\rho):=2\rho^4+3\rho^3+\rho^2+3\rho-1+2(1+\rho)^2(\rho+(1-\rho)\log(1-\rho))=0,\eeas\end{cor}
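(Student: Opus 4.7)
The plan is to derive this corollary as the limiting case $K\to\infty$ of \textrm{Theorem \ref{T6}}. As $K\to\infty$, the quasiconformality parameter $k=(K-1)/(K+1)$ tends to $1^{-}$, and the $K$-quasiconformality hypothesis $|g'/h'|\le k<1$ relaxes to the bare sense-preserving condition $|g'/h'|<1$. The defining equation for $\rho_0$ in \textrm{Theorem \ref{T6}},
\beas 2\rho^4+3\rho^3+\rho^2+3\rho-1+2\frac{K-1}{K+1}(1+\rho)^2\left(\rho+(1-\rho)\log(1-\rho)\right)=0,\eeas
becomes precisely $G_8(\rho)=0$ in this limit.

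First, I would observe that for a sense-preserving harmonic mapping $f=h+\ol g$ with $g(z)=\sum_{n=2}^{\infty}b_n z^n$ (so $g'(0)=0$), the dilatation $\omega=g'/h'$ is analytic in $\D$ with $\omega(0)=0$ and $|\omega(z)|<1$. Schwarz's lemma then yields $|\omega(z)|\le|z|$, equivalently $|g'(z)|\le|zh'(z)|$ in $\D$. This is exactly the hypothesis of \textrm{Lemma \ref{lem30}} with $k=1$, so that lemma delivers
\beas\sum_{n=2}^{\infty}|b_n|\rho^n\le(1-a^2)\left(\frac{\rho}{1-\rho}+\log(1-\rho)\right)\quad\text{for}\quad\rho\le 1/3,\eeas
where $a=|a_0|$. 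This replaces the $k$-weighted version used in \textrm{Theorem \ref{T6}}.

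Next, I would rerun the chain of estimates in the proof of \textrm{Theorem \ref{T6}} at $k=1$: the Schwarz--Pick bounds on $|h(z)|$ and $|h'(z)|$ from \textrm{Lemma \ref{lem2}}, the refined-tail estimate from \textrm{Lemma \ref{lem5}}, and the bound just displayed on $\sum|b_n|\rho^n$ are either independent of $k$ or depend on it linearly. Assembling them as in the proof of \textrm{Theorem \ref{T6}} reduces matters to showing $G_5(1,\rho)\le 0$ at $k=1$, which after simplification is precisely $G_8(\rho)\le 0$.

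Finally, I would verify that $G_8$ has a unique positive root in $(0,1/3)$ and locate it numerically. Direct evaluation gives $G_8(0)=-1<0$ and $G_8(1/3)>0$, while substituting $k=1$ into the expression for $G_6'(\rho)$ from the proof of \textrm{Theorem \ref{T6}} yields a manifestly nonnegative quantity on $[0,1/3]$; hence $G_6=G_8/((1+\rho)^2(1-\rho))$ is monotonically increasing, and since $(1+\rho)^2(1-\rho)>0$ on $(0,1/3)$, it follows that $G_8$ has a unique root $\rho_0\approx 0.254876$ there. The main obstacle is essentially bookkeeping: ensuring every estimate used in \textrm{Theorem \ref{T6}} extends continuously to the endpoint $k=1$, which it does since the constant $k<1$ entered the argument only through the $K$-quasiconformality assumption itself and all the auxiliary bounds are uniform in $k\in[0,1]$.
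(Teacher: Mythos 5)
Your proposal is correct and follows essentially the same route as the paper, which simply states that the corollary is obtained by letting $K\to\infty$ in Theorem \ref{T6}. In fact you supply more justification than the paper does: your observation that $b_1=0$ plus Schwarz's lemma gives $|g'(z)|\le|zh'(z)|$ directly (the $k=1$ endpoint of Lemma \ref{lem30}), and your check that all the auxiliary estimates are uniform in $k\in[0,1]$, are exactly the points one would need to make the paper's one-line deduction rigorous.
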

as illustrated in Figure \ref{fig1}. The number $\rho_0$ is the best possible.
\begin{figure}[H]
\centering
\includegraphics[scale=0.5]{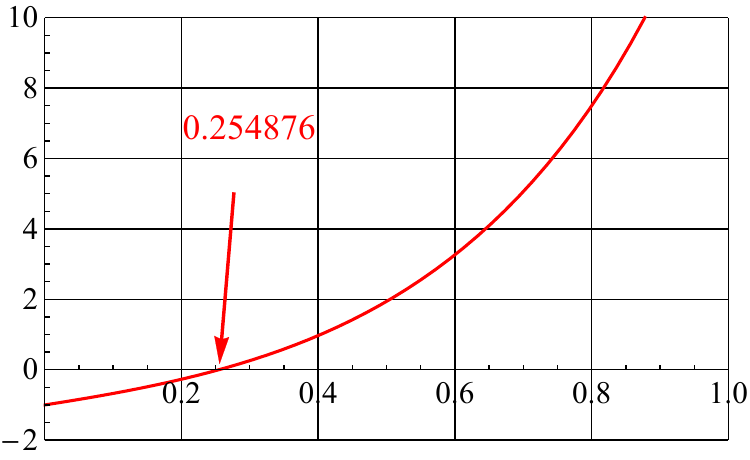}
\caption{The graph of $G_8(\rho)$}
\label{fig1}
\end{figure}
In the following, we obtain the sharp version of \textrm{Theorem F}.
 \begin{theo}Suppose that $f(z)=h(z)+\ol{g(z)}=\sum_{n=0}^\infty a_n z^n+\ol{\sum_{n=2}^\infty b_n z^n}$ is a sense-preserving $K$-quasiconformal harmonic mapping in $\mathbb{D}$, where $h(z)$ is bounded in $\mathbb{D}$. Then
\beas\sum_{n=0}^\infty |a_n|\rho^n+\sum_{n=2}^\infty |b_n|\rho^n\leq \Vert h(z)\Vert_\infty\quad\text{for}\quad \rho\leq \rho_0,\eeas
where $\rho_0$ is the unique positive root of the equation 
\beas \frac{4\rho}{1-\rho}\left(\frac{K}{K+1}\right)+2\left(\frac{K-1}{K+1}\right)\log(1-\rho)=1.\eeas
The number $\rho_0$ is the best possible.
\end{theo}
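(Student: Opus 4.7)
The plan is to imitate the proof of Theorem \ref{T6}, but without inserting the terms $|h(z)|$ and $|h'(z)|\rho$ or the refinement correction into the left-hand side. Normalize $\|h\|_\infty \leq 1$ and set $a = |a_0| \in [0,1)$, so Lemma \ref{lem2} (at $z=0$) gives $|a_n| \leq 1-a^2$ for $n \geq 1$. Since $b_1 = 0$, the dilatation $\omega = g'/h'$ vanishes at the origin and satisfies $|\omega| \leq k$, so Schwarz's lemma yields $|g'(z)| \leq k|z||h'(z)|$ on $\mathbb{D}$. Applying Lemma \ref{lem30} I would obtain
\[\sum_{n=2}^\infty n|b_n|\rho^{n-1} \leq k\sum_{n=1}^\infty n|a_n|\rho^n \leq \frac{k(1-a^2)\rho}{(1-\rho)^2}\qquad\text{for}\quad \rho\leq\tfrac{1}{3},\]
and integrating on $[0,\rho]$ yields $\sum_{n=2}^\infty |b_n|\rho^n \leq k(1-a^2)\bigl(\tfrac{\rho}{1-\rho}+\log(1-\rho)\bigr)$.

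Combining with the elementary bound $\sum_{n\geq 1}|a_n|\rho^n \leq (1-a^2)\tfrac{\rho}{1-\rho}$ produces
\[\sum_{n=0}^\infty|a_n|\rho^n + \sum_{n=2}^\infty|b_n|\rho^n \leq a + (1-a^2) B_k(\rho),\qquad B_k(\rho) := \frac{(1+k)\rho}{1-\rho} + k\log(1-\rho).\]
Dividing the desired inequality $\leq 1$ by $1-a$ reduces it to $(1+a)B_k(\rho) \leq 1$. A direct derivative computation shows $B_k(\rho) > 0$ on $(0,1)$, so the worst case is $a\to 1^-$, giving the condition $2B_k(\rho)\leq 1$. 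After substituting $k=(K-1)/(K+1)$, this is precisely the defining equation for $\rho_0$, and strict monotonicity of $2B_k$ together with $2B_k(0)=0$ and $2B_k\to\infty$ guarantees a unique root in $(0,1)$. To legitimize the use of Lemma \ref{lem30}, one must verify $\rho_0\leq 1/3$; this follows from $2B_k(1/3)-1 = \tfrac{K-1}{K+1}(1+2\log(2/3))\geq 0$, which is strict for $K>1$, while equality at $K=1$ gives $\rho_0=1/3$, the classical Bohr radius.

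For sharpness I would take the natural extremal pair $h(z)=(a-z)/(1-az)$ together with $g$ defined by $g'(z)=\lambda k z h'(z)$, $|\lambda|=1$, so that $f=h+\overline{g}$ is $K$-quasiconformal with $b_1=0$. The coefficients work out to $A_n=-(1-a^2)a^{n-1}$ for $n\geq 1$ and $B_n=-\lambda k\,\tfrac{n-1}{n}(1-a^2)a^{n-2}$ for $n\geq 2$; summing the geometric and logarithmic series in closed form using $\sum_{n\geq 2}\tfrac{n-1}{n}y^n = \tfrac{y}{1-y}+\log(1-y)$, then expanding to first order in $(1-a)$ as $a\to 1^-$, reduces the excess $\sum_{n=0}^\infty|A_n|\rho^n+\sum_{n=2}^\infty|B_n|\rho^n-1$ to $(1-a)(2B_k(\rho)-1)+o(1-a)$, which is strictly positive whenever $\rho>\rho_0$. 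The only real hurdle in the whole argument is verifying $\rho_0\leq 1/3$, since Lemma \ref{lem30} is stated only on that range; the elementary inequality $2B_k(1/3)\geq 1$ settles this uniformly in $K$. Everything else reduces to the same pattern of monotonicity arguments already used in Theorem \ref{T6}, the key improvement over Theorem F being the replacement of the Cauchy--Schwarz estimate for $\sum|b_n|\rho^n$ with the tighter quasi-subordination estimate of Lemma \ref{lem30} followed by integration.
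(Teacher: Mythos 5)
Your proposal is correct and follows essentially the same route as the paper: the paper likewise combines the bound $|a_n|\le 1-a^2$ with the integrated estimate $\sum_{n\ge 2}|b_n|\rho^n\le k(1-a^2)\bigl(\tfrac{\rho}{1-\rho}+\log(1-\rho)\bigr)$ from Lemma \ref{lem30}, reduces to $1+(1-a)\bigl((1+a)B_k(\rho)-1\bigr)$, lets $a\to 1^-$, and uses the same extremal pair $h(z)=(a-z)/(1-az)$, $g'=\lambda kzh'$. Your explicit verification that $\rho_0\le 1/3$ (needed to invoke Lemma \ref{lem30}) is a detail the paper leaves implicit, but it does not change the argument.
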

\begin{proof}Using similar arguments as in the proof of \textrm{Theorem \ref{T1}} and in view of the inequality (\ref{p6}), we have
\beas \sum_{n=0}^\infty |a_n|\rho^n+\sum_{n=2}^\infty |b_n|\rho^n&\leq &1+(1-a)G_9(a,\rho)\;\;\text{for}\;\;\rho\leq \frac{1}{3},\eeas
where $a=|a_0|\in[0,1)$, $G_9(a,\rho)=(k+1)(1+a)\rho/(1-\rho)+k(1+a)\log(1-\rho)-1$ and $k=(K-1)/(K+1)$.
The remaining calculations and the sharpness of the result follow from \textrm{Theorem \ref{T6}}.
\end{proof}
\section{Declarations}
\noindent{\bf Acknowledgment:} The work of the first author is supported by University Grants Commission (IN) fellowship (No. F. 44 - 1/2018 (SA - III)). The authors like to
thank the anonymous reviewers and and the editing team for their valuable suggestions towards the improvement of the paper.\\[2mm]
{\bf Conflict of Interest:} The authors declare that there are no conflicts of interest regarding the publication of this paper.\\[1mm]
{\bf Availability of data and materials:} Not applicable.

\end{document}